\newfont{\msbm}{msbm10 at 11pt}
\newfont{\msbmsm}{msbm10 at 8pt}
\newtheorem{Theo}{Theorem}
\newtheorem{Lemma}[Theo]{Lemma}
\newtheorem{Prop}[Theo]{Proposition}
\newcommand{\qedwhite}{\hfill \ensuremath{\Box}}
\begin{document}

\title{A Yaglom type asymptotic result for subcritical branching Brownian motion with absorption}
\author{Jiaqi Liu\thanks{{\it E-mail address:} jil1131@ucsd.edu}\,}
\affil{\fontsize{9}{10.8}\itshape{Department of Mathematics, University of California San Diego, La Jolla, CA 92093-0112, USA}}
\date{}

\maketitle
\begin{abstract}
We consider a slightly subcritical branching Brownian motion with absorption, where particles move as Brownian motions with drift $-\sqrt{2+2\varepsilon}$, undergo dyadic fission at rate $1$, and are killed when they reach the origin. We obtain a Yaglom type asymptotic result, showing that the long run expected number of particles conditioned on survival grows exponentially as $1/\sqrt{\varepsilon}$ as the process approaches criticality.
\end{abstract}

{\small MSC: Primary 60J80; secondary 60J65; 60J25

Keywords: Branching Brownian motion; Yaglom limit laws; Near critical phenomena}

\section{Introduction}
We consider a branching Brownian motion (BBM) with absorption, in which particles move as Brownian motions with drift $-\rho$ ($\rho \in \mathbb{R}$), undergo dyadic branching at rate $1$, and are killed when they reach the origin. Kesten \cite{kesten1978branching} first studied this model in 1978 and showed that  when $\rho \geq \sqrt{2}$, BBM with absorption dies out almost surely while when $\rho < \sqrt{2}$, there is a positive probability of survival.  Therefore, $\rho=\sqrt{2}$ is the critical value separating the supercritical case $\rho<\sqrt{2}$ and the subcritical case $\rho>\sqrt{2}$. 

There has been a long-standing interest in problems related to the asymptotic behavior of the survival probability. After introducing the model, Kesten \cite{kesten1978branching} obtained upper and lower bounds on the probability in the critical case that the process survives until some large time. Kesten's result was further improved by Berestycki et al.\cite{berestycki2014critical}. The asymptotic result for the survival probability in the supercritical case was obtained by Harris et al. \cite{harris2006further} through studying the FKPP equation associated with this process. Derrida and Simon \cite{derrida2007survival} gave a quite precise prediction for the survival probability in the slightly supercritical case  through nonrigorous PDE methods, where the drift $\rho$ is slightly below the critical value. Rigorous probabilistic proofs were provided by \cite{berestycki2011survival}.  In this paper, we are interested in a nearly critical case, where $\rho$ approaches the critical value $\sqrt{2}$ from above. For notational simplicity, we write $\rho^2/2-1=\varepsilon$ where $0<\varepsilon<1$ and $\varepsilon$ approaches to $0$. We denote by $P^x_{-\rho}$ the probability measure for branching Brownian motion started from a single particle at $x>0$ with drift $-\rho$ and absorbed at $0$, and by $\mathcal{N}_t^{-\rho}$ the set of surviving particles under $P^{x}_{-\rho}$ at time $t$. The set of positions of particles at time $t$ under $P^x_{-\rho}$ is $\{Y_u(t): u\in \mathcal{N}_t^{-\rho}\}$ and the number of particles at time $t$ under $P^x_{-\rho}$ is $N_t^{-\rho}$. 

In the subcritical case, almost surely, the process becomes extinct. However, it is interesting to consider the behavior of the process conditioned on survival up to a large time. This type of result is called a Yaglom theorem and has been considered by Yaglom\cite{yaglom1947certain} in ordinary branching processes. A similar question was studied by Chauvin and Rouault \cite{chauvin1988kpp} in the setting of BBM without absorption. Let $P$ be the law of an ordinary BBM started from $0$ without drift and absorption and $\mathcal{N}_t$ be the set of particles at time $t$. Chauvin and Rouault first gave an asymptotic expression for the probability of existence of particles to the right of $\rho t+x$ at some large time, $P(\exists u\in \mathcal{N}_t: Y_u(t)>\rho t+x)$. Then they obtained a limit distribution for the number of particles that drift above $\rho t+x$ at time $t$ conditioned on the presence of such particles for $\rho>\sqrt{2}$. Harris and Harris \cite{harris2007survival} obtained related results for BBM with absorption. They derived a large-time asymptotic formula for the survival probability in the subcritical case. They proved that for $\rho >\sqrt{2}$ and $x>0$, there exists a constant $K_\varepsilon$ that is independent of $x$ but dependent on the drift $\rho$, and therefore on $\varepsilon$, such that,
\begin{equation}\label{Harris}
\lim_{t\rightarrow \infty}P^x_{-\rho}(N_t^{-\rho}>0)\frac{\sqrt{2\pi t^3}}{x}e^{-\rho x+\varepsilon t}=K_\varepsilon,
\end{equation}
 and furthermore,
\begin{equation}\label{HarrisE}
\lim_{t\rightarrow \infty}\frac{P^x_{-\rho}(N_t^{-\rho}>0)}{E^x_{-\rho}[N_t^{-\rho}]} = \frac{1}{2}\rho^2K_\varepsilon.
\end{equation}
Comparing this with Chauvin and Rouault's result, as $t$ goes to infinity, $P^x_{-\rho}(N_t^{-\rho}>0)$ and $P(\exists u\in \mathcal{N}_t: Y_u(t)>\rho t+x)$ are the same on the exponential scale but different in terms of the polynomial corrections. The constant $K_\varepsilon$ plays an important role in calculating the limiting expected number of particles alive conditioned on at least one surviving. In fact, it is pointed out by Harris and Harris in \cite{harris2007survival} that as a direct consequence of (\ref{HarrisE}), we have
\begin{equation}\label{Chauvin}
\lim_{t\rightarrow\infty}E^{x}_{-\rho}[N_{t}^{-\rho}|N_t^{-\rho}>0]=\frac{2}{\rho^2 K_{\varepsilon}}.
\end{equation}
Furthermore, by using the method of Chauvin and Rouault \cite{chauvin1988kpp}, it follows from (\ref{HarrisE}) that there is a probability distribution $(\pi_j)_{j\geq1}$ such that
\[
\lim_{t\rightarrow\infty}P^{x}_{-\rho}(N_{t}^{-\rho}=j|N_t^{-\rho}>0)= \pi_j.
\]
Our main result, which is Theorem 1 below, analyzes the asymptotic behavior of ($\ref{Chauvin}$) as $\varepsilon$ goes to $0$. We show that the long-run expected number of particles conditioned on survival grows exponentially as the process gets closer to being critical.
\begin{Theo}\label{main}
 There exist positive constants $C_1$ and $C_2$ such that for $\varepsilon$ small enough,
\begin{equation}
e^{C_{1}/\sqrt{\varepsilon}} \leq \lim_{t\rightarrow \infty}E^{x}_{-\rho}[N_{t}^{-\rho}|N_t^{-\rho}>0]\leq e^{C_{2}/\sqrt{\varepsilon}}.
\label{goal}
\end{equation}
\end{Theo}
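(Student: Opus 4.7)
The plan begins with (\ref{Chauvin}), which identifies $\lim_{t\to\infty} E^x_{-\rho}[N_t^{-\rho}|N_t^{-\rho}>0] = 2/(\rho^2 K_\varepsilon)$. Since $\rho^2 \in [2,4]$ it suffices to establish $e^{-C_2/\sqrt\varepsilon} \leq K_\varepsilon \leq e^{-C_1/\sqrt\varepsilon}$ for small $\varepsilon>0$. By the many-to-one lemma, $E^x_{-\rho}[N_t^{-\rho}] = e^t P^x(\tau_0 > t) \sim \frac{2xe^{\rho x}}{\rho^2 \sqrt{2\pi t^3}} e^{-\varepsilon t}$, where $\tau_0$ is the first hitting time of $0$ for Brownian motion with drift $-\rho$ started from $x$; together with (\ref{HarrisE}) this shows that $K_\varepsilon$ is, up to the bounded factor $2/\rho^2$, the late-time limit of $P^x_{-\rho}(N_t^{-\rho}>0)/E^x_{-\rho}[N_t^{-\rho}]$, so the goal reduces to sharp two-sided control of this ratio.

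The upper bound on the conditional expectation (i.e.\ a lower bound on $K_\varepsilon$) follows from the Paley--Zygmund inequality, $E^x_{-\rho}[N_t^{-\rho}\mid N_t^{-\rho}>0] \leq E^x_{-\rho}[(N_t^{-\rho})^2]/E^x_{-\rho}[N_t^{-\rho}]$. I would evaluate the second moment via the many-to-two lemma,
\begin{equation*}
E^x_{-\rho}\bigl[(N_t^{-\rho})^2\bigr] = E^x_{-\rho}[N_t^{-\rho}] + 2\int_0^t\!\! ds \int_0^\infty\!\! dy\; e^s\, \tilde p_s(x,y)\, \bigl(E^y_{-\rho}[N_{t-s}^{-\rho}]\bigr)^2,
\end{equation*}
with $\tilde p_s(x,y)$ the transition density of Brownian motion with drift $-\rho$ killed at $0$. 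Using the asymptotic $E^y_{-\rho}[N_{t-s}^{-\rho}] \asymp y e^{\rho y}(t-s)^{-3/2} e^{-\varepsilon(t-s)}$ in the relevant range, the integrand contains the factor $y^2 e^{2\rho y} \tilde p_s(x,y)$, which localizes at $y$ on the characteristic scale $1/\sqrt\varepsilon$ due to the competition between $e^{2\rho y}$ and the Gaussian tail of $\tilde p_s$. A saddle-point evaluation should then yield $E^x_{-\rho}[(N_t^{-\rho})^2]/E^x_{-\rho}[N_t^{-\rho}] \leq e^{C_2/\sqrt\varepsilon}$.

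The lower bound on the conditional expectation (i.e.\ an upper bound on $K_\varepsilon$) is the main obstacle, since the trivial first-moment estimate $P^x_{-\rho}(N_t^{-\rho}>0)\leq E^x_{-\rho}[N_t^{-\rho}]$ only gives $K_\varepsilon\leq 1$. My plan is to decompose the survival event according to whether some particle ever exceeds an auxiliary threshold $L \sim \pi/\sqrt{2\varepsilon}$. On the event that no particle reaches $L$, the process is dominated by BBM on $(0,L)$ with Dirichlet boundary at both endpoints, whose principal eigenvalue $-\varepsilon - \pi^2/(2L^2)$ produces an additional decay factor $e^{-\pi^2 t/(2L^2)}$ on top of $e^{-\varepsilon t}$. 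On the complementary event, a first-passage analysis for BBM with drift $-\rho$ yields the exponential suppression $e^{-\rho L}\sim e^{-\pi/\sqrt\varepsilon}$, which is the source of the desired exponent. The key difficulty is handling the recursive self-similarity --- the process restarts from height $L$ with the same law, so $K_\varepsilon$ reappears implicitly in the survival probability of the post-$L$ subtree --- and in optimizing over $L$ while tracking subleading polynomial factors, where even $\varepsilon$-dependence in the polynomial contributes to the exponent.
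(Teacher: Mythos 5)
Your reduction of the theorem to two-sided bounds on $K_\varepsilon$ via (\ref{Chauvin}) is correct and is the same starting point as the paper's Proposition \ref{K}. From there, however, your plan diverges from the paper's (which analyzes $Q^x[1/\sum_u Y_u(t)e^{\rho Y_u(t)}]$ directly under the spine measure, using a Poisson-point/Jensen argument in one direction and a Bessel-bridge hitting-time estimate in the other), and the second-moment step contains a fatal gap.

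The ratio $E^x_{-\rho}[(N_t^{-\rho})^2]/E^x_{-\rho}[N_t^{-\rho}]$ does not converge as $t\to\infty$; it diverges, so the bound $E^x_{-\rho}[N_t^{-\rho}\mid N_t^{-\rho}>0]\le E^x_{-\rho}[(N_t^{-\rho})^2]/E^x_{-\rho}[N_t^{-\rho}]$ gives nothing. To see this, substitute $u=t-s$ in your many-to-two formula, divide by $E^x_{-\rho}[N_t^{-\rho}]\sim\frac{2xe^{\rho x}}{\rho^2\sqrt{2\pi t^3}}e^{-\varepsilon t}$, and take the pointwise limit in $(u,y)$ as $t\to\infty$ using $e^{t-u}\tilde p_{t-u}(x,y)\sim\frac{2xye^{\rho x-\rho y}}{\sqrt{2\pi(t-u)^3}}e^{-\varepsilon(t-u)}$. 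Fatou then gives
\begin{equation*}
\liminf_{t\to\infty}\frac{E^x_{-\rho}[(N_t^{-\rho})^2]}{E^x_{-\rho}[N_t^{-\rho}]}\;\ge\;2\rho^2\int_0^\infty e^{\varepsilon u}\int_0^\infty y e^{-\rho y}\bigl(E^y_{-\rho}[N_u^{-\rho}]\bigr)^2\,dy\,du.
\end{equation*}
Writing $E^y_{-\rho}[N_u^{-\rho}]=e^uP^y_{-\rho}(\tau_0>u)$, the inner integral receives a contribution of order $e^{(1-\varepsilon)u}$ from $y$ near $\rho u/2$: there $P^y_{-\rho}(\tau_0>u)\asymp u^{-1/2}$, $\rho y\approx\rho^2 u/2=(1+\varepsilon)u$, and $ye^{-\rho y}\,e^{2u}\,P^y(\tau_0>u)^2\asymp e^{2u-(1+\varepsilon)u}=e^{(1-\varepsilon)u}$ up to powers of $u$. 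Thus the outer $u$-integral $\int_0^\infty e^{\varepsilon u}\cdot e^{(1-\varepsilon)u}\,du$ diverges. In particular, your claim that the $y$-integral localizes at the fixed scale $1/\sqrt\varepsilon$ is wrong: the integrand $y^2e^{2\rho y}\tilde p_s(x,y)$ peaks at $y$ of order $\rho(t-s)$, a $t$-dependent scale. This failure is not a technicality: the paper conjectures that conditional on survival $\log N_t^{-\rho}$ is typically of order $\varepsilon^{-1/3}$ while $\log E[N_t^{-\rho}\mid N_t^{-\rho}>0]\asymp\varepsilon^{-1/2}$, so the conditional law has a heavy right tail and one should expect the conditional second moment to be infinite. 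You need a different argument; the paper gets the matching upper bound on the conditional expectation from a Jensen inequality applied under $Q^{x,t,z}$ conditionally on the spine trajectory, together with the martingale property and a lower bound on the Bessel-bridge probability $Q(M'\le C\sqrt\varepsilon)$ via Lemma \ref{RBMa=0}.

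For the other direction (upper bound on $K_\varepsilon$), your Dirichlet-eigenvalue/level-$L$ decomposition captures the right intuition --- the exponent $\pi/\sqrt\varepsilon$ does come from the interplay between the Dirichlet gap $\pi^2/(2L^2)$ and the $e^{-\rho L}$ cost of reaching $L$ --- but as you acknowledge, the recursive structure of the post-$L$ subtree and the $\varepsilon$-dependent polynomial corrections are the entire difficulty and remain unresolved in your sketch. The paper handles this direction under the spine measure as well, by showing (Lemmas \ref{largeMubPoisson}--\ref{smallMub}) that with the possible exception of an $e^{-c/\sqrt\varepsilon}$-probability event, some side branch of the spine carries a particle to the right of $C/(4\sqrt\varepsilon)$ at time $t$, which forces $1/\sum_u Y_u(t)e^{\rho Y_u(t)}\le e^{-C\rho/(5\sqrt\varepsilon)}$.
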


Kesten \cite{kesten1978branching} had a result of this type in the critical case. Recently, Maillard and Schweinsberg \cite{berestycki2020yaglom} proved Yaglom-type limit theorems for more specific behaviors of the process in the critical case. They derived the asymptotic distributions of the survival time, the number of particles and the position of the rightmost particle for the process conditioned on survival for a long time. In the setting of supercritical branching random walk (BRW), Gantert et al. \cite{gantert2011asymptotics} and Pain \cite{pain2018near} considered problems with similar flavor. Let $\gamma$ be the asymptotic speed of the rightmost position in the BRW. Gantert et al. \cite{gantert2011asymptotics} studied the probability that there exists an infinite ray which stays above the line of slope $\gamma-\varepsilon$ as $\varepsilon$ goes to $0$. Having an infinite ray staying above the line with slope $\gamma-\varepsilon$ can be viewed as survival with slightly supercritical drift. They proved that when $\varepsilon\rightarrow0$, this probability decays as $\exp(-(c+o(1))/\sqrt{\varepsilon})$ where $c$ is a positive constant depending on the distribution of the branching random walk. In \cite{pain2018near}, Pain studied the near-critical Gibbs measure and the partition function of parameter $\beta$ on the $n-$th generation of the BRW. In his setting, the inverse temperature $\beta$ is a function of $n$ and approaches to the critical value $1$ both from above and below. Our setting can be viewed as a iterated limit where we first let time $t$ go to infinity and then let the process approach to criticality, while Pain's setting can be viewed as a double limit where the process approaches to criticality at the same time when the generation goes to infinity.

It is important to point out that Theorem \ref{main} does not imply that as the process approaches criticality,  we have $\log N_t^{-\rho}=O(\varepsilon^{-1/2})$ conditioned on survival up to time $t$ in a typical realization of the process. We conjecture that there is a big difference between the expected number and the typical number of surviving particles because the expectation is dominated by rare events where an unusually large number of particles survive. We further conjecture that for $\varepsilon$ sufficiently small, the logarithm of the number of particles at time $t$ conditioned on survival up to time $t$ is typically around $\varepsilon^{-1/3}$. 

The proof of Theorem \ref{main} relies on a better understanding of $K_\varepsilon$ as the drift approaches the critical value. According to (\ref{Harris}), studying $K_\varepsilon$ boils down to finding an asymptotic expression for the survival probability in the slightly subcritical regime. Here we  apply a spinal decomposition to transform survival probability to expectation of the reciprocal of a martingale.

As in Harris and Harris \cite{harris2007survival}, define
\[
V(t):=\sum_{u\in \mathcal{N}_t^{-\rho}}Y_u(t)e^{\rho Y_u(t)+\varepsilon t}.
\]
\noindent Lemma 2 in \cite{harris2007survival} shows that $\{V(t)\}_{t\geq 0}$ a martingale under $P^{x}_{-\rho}$. We can define a new measure $Q^x$ on the same probability space as $P^x_{-\rho}$ via $\{V(s)\}_{s\geq 0}$,
\begin{equation}\label{Q}
\frac{dQ^x}{dP^x_{-\rho}}\Big|_{\mathcal{F}_s}=\frac{V(s)}{V(0)}.
\end{equation}
Under the measure $Q^x$, there is one chosen particle which is called the spine whose law is altered and all subtrees branching off the spine behave like the original branching Brownian motion with absorption. The spine moves as a Bessel-3 process starting from $x$. With accelerated rate 2, the initial ancestor undergoes binary fission. The spine is chosen uniformly from the two offspring, and the remaining offspring initiates an independent copy of the original branching Brownian motion with absorption. In this paper $Q^x$ is used both for probability and expectation. Representing $K_\varepsilon$ under $Q^x$ in (\ref{Harris}),
\[
K_{\varepsilon}=\lim_{t\rightarrow \infty}Q^x\bigg[\frac{V(0)}{V(t)};N_t^{-\rho}>0\bigg]\frac{\sqrt{2\pi t^3}}{x}e^{-\rho x+\varepsilon t}=\lim_{t\rightarrow \infty}\sqrt{2\pi t^3}Q^x\bigg[\frac{1}{\sum_{u\in \mathcal{N}_t^{-\rho}}Y_u(t)e^{\rho Y_u(t)}}\bigg].
\]
As a result, Theorem \ref{main} follows from the following proposition.
\begin{Prop}\label{K}
 There exist positive constants $C_1$ and $C_2$ such that for $\varepsilon$ small enough,
 \begin{equation}\label{limsup}
\limsup_{t\rightarrow \infty}\sqrt{2\pi t^3}Q^{x}\bigg[\frac{1}{\sum_{u\in\mathcal{N}_t^{-\rho}}Y_u(t)e^{\rho Y_u(t)}}\bigg]\leq e^{-C_{1}/\sqrt{\varepsilon}},
\end{equation}
\begin{equation}\label{liminf}
\liminf_{t\rightarrow \infty}\sqrt{2\pi t^3}Q^{x}\bigg[\frac{1}{\sum_{u\in \mathcal{N}_t^{-\rho}}Y_u(t)e^{\rho Y_u(t)}}\bigg]\geq e^{-C_{2}/\sqrt{\varepsilon}}.
\end{equation}
\end{Prop}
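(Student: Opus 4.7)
The plan is to exploit the spinal decomposition of $Q^x$ set up above. Under $Q^x$ the spine $\xi$ is a Bessel-$3$ process started at $x$, branching at rate $2$, with each branching event spawning an independent copy of the original BBM with absorption. Writing $\hat V(t) := V(t)e^{-\varepsilon t} = \sum_u Y_u(t) e^{\rho Y_u(t)}$ and denoting by $\tau_1<\tau_2<\cdots$ the branching times along the spine, one has
\[
\hat V(t) \;=\; \xi_t\, e^{\rho \xi_t} \;+\; \sum_{i:\,\tau_i < t} \hat V^{(i)}(t-\tau_i),
\]
where each $\hat V^{(i)}$ is an independent copy of $\hat V$ for a BBM started from $\xi_{\tau_i}$. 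The $t^{-3/2}$ scaling on both sides traces back to the Bessel-$3$ density at order-one heights, namely $Q^x(\xi_t\in dy) \sim (2y^2/\sqrt{2\pi t^3})\,dy$, so that $Q^x[1/\hat V(t)]$ is essentially determined by those trajectories on which the spine returns to a neighborhood of the origin at time $t$.

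For the upper bound~(\ref{limsup}), the goal is to show that $\hat V(t)$ is typically at least $e^{C_1/\sqrt{\varepsilon}}$ on such trajectories. The natural scale is the critical height $L \sim 1/\sqrt{\varepsilon}$, at which the Dirichlet eigenvalue $\pi^2/(2L^2)$ of the free Laplacian on $[0,L]$ matches the subcritical exponent $\varepsilon$. I would argue in three steps. First, by standard first-passage estimates, the Bessel-$3$ spine reaches height $L$ within time $O(\varepsilon^{-1})$ with high $Q$-probability. Second, while the spine lives near $L$ it spawns subtrees which, by $\varepsilon$-uniform many-to-one bounds for BBM in the strip $[0,L]$ built from the principal eigenfunction $\sin(\pi x/L)$, individually survive to time $t$ with non-negligible probability and each contribute a non-trivial amount to $\hat V(t)$. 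Third, a second-moment / Paley--Zygmund argument applied to the sum of these subtree contributions shows that $\hat V(t)\geq e^{C_1/\sqrt{\varepsilon}}$ except on an event whose $Q^x$-probability is $o(1/\sqrt{t^3})$, which combined with the Bessel-$3$ polynomial cost $\sim 1/\sqrt{t^3}$ of ending near the origin gives~(\ref{limsup}).

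For the lower bound~(\ref{liminf}), the plan is to construct an event $A_t$ of $Q^x$-probability at least $c\,e^{-C_2/\sqrt{\varepsilon}}/\sqrt{2\pi t^3}$ on which $\hat V(t) \leq e^{C_2/\sqrt{\varepsilon}}$. A natural candidate is $A_t = \{\xi_t\in[1,2]\}\cap\{\text{the spine trajectory is suitably controlled}\}\cap\{\text{every spawned subtree is extinct at time }t\}$, where the control condition is chosen so that the expected number of surviving subtrees along the controlled spine is at most $O(e^{C_2/\sqrt{\varepsilon}})$. Conditionally on such a spine trajectory, the probability that every subtree dies is at least $\exp\bigl(-2\sum_i P^{\xi_{\tau_i}}(N^{-\rho}_{t-\tau_i}>0)\bigr)$, and each individual survival probability is upper bounded by the corresponding expected number of particles, which is computable in closed form via Girsanov without invoking $K_\varepsilon$. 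On $A_t$ one has $\hat V(t) = \xi_t e^{\rho \xi_t} \leq 2e^{2\rho}$, yielding~(\ref{liminf}).

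The hard part will be the second-moment step in the upper bound: the subtree contributions are correlated through the shared spine trajectory and individually heavy-tailed, so a direct Paley--Zygmund estimate is delicate and will likely require a truncation at the $e^{C_1/\sqrt{\varepsilon}}$ scale before taking second moments. A related obstacle common to both directions is avoiding circularity with $K_\varepsilon$ itself: one cannot invoke the Harris--Harris asymptotic~(\ref{Harris}) at the level of individual subtrees, and must rely instead on $\varepsilon$-uniform many-to-one and many-to-two formulas, Girsanov-based estimates for a single Brownian motion with drift, and eigenfunction analysis on the strip $[0,L]$. Tuning the critical height $L$ and the "control" threshold in the lower-bound construction both at the $\varepsilon^{-1/2}$ scale is what ultimately produces constants $C_1$ and $C_2$ of order unity and hence the exponential scaling $e^{\Theta(1/\sqrt{\varepsilon})}$ in both~(\ref{limsup}) and~(\ref{liminf}).
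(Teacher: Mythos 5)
Your high-level picture---spinal decomposition, critical height of order $\varepsilon^{-1/2}$, the $t^{-3/2}$ factor coming from the Bessel-3 spine ending near the origin---matches the paper, but there are two genuine gaps, one in each direction.

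For the upper bound, you propose a truncated second-moment / Paley--Zygmund argument to show $\hat V(t)\geq e^{C_1/\sqrt\varepsilon}$ off a $o(t^{-3/2})$-event. You correctly flag this as delicate because the subtree contributions share the spine trajectory and are heavy-tailed; the paper sidesteps this entirely. After conditioning on $\xi_t=z\leq\varepsilon^{-1/2}$ and retaining only the subtrees spawned in the last $\varepsilon^{-3/2}$ of time, it defines $M=\sup_{s\leq\varepsilon^{-3/2}}(\varepsilon\zeta_s-\varepsilon^2 s/\rho)$ and splits on $\{M<2C\sqrt\varepsilon\}$ versus $\{M\geq 2C\sqrt\varepsilon\}$. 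On the first event the crude bound $1/\hat V(t)\leq 1/(ze^{\rho z})$ suffices because the reflected-Brownian-bridge small-ball estimate (Lemma~\ref{RBMa=0}, via Jacobi theta asymptotics) makes the event itself cost $e^{-c/\sqrt\varepsilon}$. On the second, a Poisson--Chernoff count of branchings above level $\sim 3C/(2\sqrt\varepsilon)$ together with Roberts' consistent-maximal-displacement theorem show that at least one subtree sends a particle above $C/(4\sqrt\varepsilon)$ at time $t$ except on an $e^{-c/\sqrt\varepsilon}$-event, and then $1/\hat V(t)\leq e^{-C\rho/(5\sqrt\varepsilon)}$ deterministically. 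No second moment is needed. If you pursue Paley--Zygmund you will also need to be precise about \emph{where} in the trajectory the spine reaches height $L$: what matters is the bridge conditioned on $\xi_t\leq\varepsilon^{-1/2}$ being high in a window near time $t$, not the unconditioned first-passage from time $0$.

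For the lower bound there is a sharper problem. You want an event on which \emph{every} spawned subtree is extinct at time $t$, bounding the spine's survival-probability sum via $u(s,y)\leq e^{\rho y-\varepsilon s}$. But that bound is vacuous for recently spawned subtrees: if the spine is at height $y\asymp\varepsilon^{-1/2}$ at time $t-s$ with $s\lesssim\varepsilon^{-3/2}$, then $e^{\rho y-\varepsilon s}\gg 1$, and indeed those subtrees survive with probability $\Theta(1)$ each. There are on the order of $\varepsilon^{-3/2}$ such branchings, so demanding their simultaneous extinction costs roughly $e^{-c\varepsilon^{-3/2}}$, far below the target $e^{-C_2/\sqrt\varepsilon}$; forcing the spine down to height $O(1)$ over the last $\varepsilon^{-3/2}$ of time instead costs $\sim e^{-c\varepsilon^{-3/2}}$ by the same small-ball estimate, so there is no cheap way out. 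The paper's key idea, which your sketch lacks, is to apply Jensen's inequality conditionally on $\mathcal G_t$ (the spine trajectory and the early-extinction event $V$), and then evaluate the conditional expectation of $\sum_u Y_u(t)e^{\rho Y_u(t)}1_{\{O_u>t^*\}}$ in closed form using the fact that $\{V(s)\}$ is a martingale for each spawned subtree. On $\{M'\leq C\sqrt\varepsilon\}$ this conditional expectation is $\leq 2^{10+4\kappa}\varepsilon^{-3-2\kappa}e^{C\rho/\sqrt\varepsilon}$, which together with $Q^{x,t,z}(V\cap\{M'\leq C\sqrt\varepsilon\})\geq\varepsilon^{-3/4}e^{-C_{12}/\sqrt\varepsilon}/4$ gives~(\ref{liminf}); extinction is demanded only of subtrees spawned before $t^*$, handled by Lemma~\ref{cutoff}. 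Relatedly, your phrase ``expected number of surviving subtrees is at most $O(e^{C_2/\sqrt\varepsilon})$'' cannot be what you intend: for $\prod_i(1-p_i)\geq e^{-C_2/\sqrt\varepsilon}$ you need $\sum_i p_i=O(\varepsilon^{-1/2})$, not $O(e^{C_2/\sqrt\varepsilon})$.
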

We point out here that we cannot specify choices of $C_1$ and $C_2$. We are only able to determine upper and lower bounds for $\lim_{t\rightarrow \infty}E^{x}_{-\rho}[N_{t}^{-\rho}|N_t^{-\rho}>0]$. 

The rest of this paper is organized as follows. In Section 2, results related to Brownian motion and the Bessel-3 process will be summarized. Sections 3 and 4 will be devoted to the proofs of the upper bound and lower bound in Proposition \ref{K} respectively. Throughout this paper,  for two nonzero functions $f(t)$ and $g(t)$, we use the notation $f(t)\sim g(t)$ as $t\rightarrow a$ to mean that $\lim_{t\rightarrow a}f(t)/g(t)=1$. We summarize some of the notation that are used throughout the paper in Table \ref{table}.
\begin{table}[!ht]  
  \centering  
   \begin{tabular}{c|p{12cm}}
       \toprule
    $P^x_{-\rho}$ & the probability measure of the branching Brownian motion started from a single particle at $x>0$ with drift $-\rho$ and absorbed at $0$.  \\
    $\mathcal{N}_t^{-\rho}$   & the set of surviving paticles under $P^x_{-\rho}$ at time $t$. \\
    $Q^x$ & the probability measure on the same probability space as $P^x_{-\rho}$ defined via a spine change of measure.   \\
    $\{\xi_t\}_{t\geq 0}$ & the trajectory of the spine.\\
    $\{\zeta_{s}\}_{0\leq s\leq t}$ & the reversed trajectory of the spine up to time $t$, i.e. $\{\xi_{t-s}\}_{0\leq s\leq t}$. \\
    $Q^{x,t,z}$ & the probability measure of the branching process under $Q^x$ whose spine starts from $x$ and is conditioned to end up at $z$ at time $t$, i.e. $Q^x(\cdot|\xi_t=z)$ or $Q^x(\cdot|\zeta_0=z)$.    \\
    $\{B_t^{x,u,y}\}_{0\leq t\leq u}$ & a Brownian bridge from $x$ to $y$ over time $u$.\\
    $\{X_t^{x,u,y}\}_{0\leq t\leq u}$ & a Bessel bridge from $x$ to $y$ over time $u$. If clear from the context, we will write $\{X_r\}_{0\leq r\leq u}$ for simplicity. \\
    $p_t^{x,u,y}(\cdot)$ & the transition density of a Bessel process from $x$ to $y$ over time $u$ at time $t$.\\
    $\{R_r^z\}_{r\geq 0}$ & a Bessel-3 process started from $z$. \\
    $p_t(x,\cdot)$ & the transition density of a Bessel process started from $x$ at time $t$. \\ \bottomrule
  \end{tabular}
  \caption {\textit{Index of some of the notation}} \label{table}
\end{table}

\section{Preliminary results}

In this section, we will summarize results pertaining to Brownian motion and the Bessel-3 process which will be used later in the proof. For further properties of the Brownian motion and the Bessel process, we refer the reader to Borodin and Salminen \cite{borodin2012handbook}. 

Let$\{B_t\}_{t\geq 0}$ be standard Brownian motion and $\{B^{x,u,y}_{t}\}_{0\leq t\leq u}$ be a Brownian bridge from $x$ to $y$ over time $u$. Standard Brownian bridge refers to the Brownian bridge from 0 to 0 in time 1, $\{B^{0,1,0}_{t}\}_{0\leq t\leq 1}$. Reflected Brownian bridge is the absolute value of the Brownian bridge, $\{|B_t^{x,u,y}|\}_{0\leq t\leq u}$. Now we will be able to state the following lemma. Lemma \ref{RBMa=0} derives the limit of the probability that a reflected standard Brownian bridge always stays below a line $at+b$ as $b(a+b)$ approaches 0.  We will prove by first obtaining the explicit probability formula written as an infinite sum and then analyzing its limiting behavior through Jacobi theta functions.

\begin{Lemma}\label{RBMa=0} 
For $a\geq 0$ and $b>0$, we have
\begin{equation*}
P\Big(\sup_{0\leq t\leq 1}\Big(|B^{0,1,0}_t|-at\Big)<b\Big)\sim \sqrt{\frac{2\pi}{b(a+b)}}e^{-\frac{\pi^2}{8b(a+b)}}\quad \text{as} \;\; b(a+b)\downarrow 0.
\end{equation*}
\end{Lemma}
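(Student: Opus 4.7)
The plan is to first express the probability as a closed-form alternating theta series, then apply the Jacobi imaginary transformation (equivalently Poisson summation) to extract its asymptotic as $b(a+b)\downarrow 0$. The central observation is that through a sequence of standard Brownian transformations the moving linear barrier can be converted into a constant one. First, via the time inversion $s=t/(1-t)$, the process $W_s:=(1+s)B^{0,1,0}_{s/(1+s)}$ is a standard Brownian motion on $[0,\infty)$, which converts the event $\{|B^{0,1,0}_t|-at<b\;\forall t\in[0,1]\}$ into $\{|W_s|<b+(a+b)s\;\forall s\geq 0\}$. Next, the Brownian scaling $\tilde W_u=\sqrt{(a+b)/b}\,W_{bu/(a+b)}$ turns this into $\{|\tilde W_u|<\sqrt{b(a+b)}\,(1+u)\;\forall u\geq 0\}$. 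Finally, a second time inversion $X_v:=(1-v)\tilde W_{v/(1-v)}$, again a standard Brownian bridge, yields $\{|X_v|<\sqrt{b(a+b)}\;\forall v\in[0,1]\}$, where continuity at the endpoints is automatic. Thus
\[
P\Big(\sup_{0\leq t\leq 1}(|B^{0,1,0}_t|-at)<b\Big)=P\Big(\sup_{0\leq v\leq 1}|X_v|<\sqrt{b(a+b)}\Big).
\]

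Next, the method of images for a Brownian bridge with symmetric absorbing barriers $\pm c$ yields the classical Kolmogorov--Smirnov identity
\[
P\Big(\sup_{0\leq v\leq 1}|X_v|<c\Big)=\sum_{n\in\mathbb Z}(-1)^n e^{-2n^2 c^2},
\]
so taking $c=\sqrt{b(a+b)}$ gives the explicit formula $\sum_{n\in\mathbb Z}(-1)^n e^{-2n^2 b(a+b)}$ advertised by the lemma's preamble.

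To extract the asymptotic I would apply the Poisson summation identity
\[
\sum_{n\in\mathbb Z}(-1)^n e^{-\alpha n^2}=\sqrt{\pi/\alpha}\,\sum_{k\in\mathbb Z}e^{-\pi^2(k+1/2)^2/\alpha}
\]
with $\alpha=2b(a+b)$. As $b(a+b)\downarrow 0$, the two terms $k=0$ and $k=-1$ dominate (each contributes $e^{-\pi^2/(8b(a+b))}$), while every other term is smaller by a factor of at least $e^{-\pi^2/(b(a+b))}$; this yields
\[
\sum_{n\in\mathbb Z}(-1)^n e^{-2n^2 b(a+b)}\sim 2\sqrt{\pi/(2b(a+b))}\,e^{-\pi^2/(8b(a+b))}=\sqrt{\frac{2\pi}{b(a+b)}}\,e^{-\pi^2/(8b(a+b))},
\]
which is exactly the stated asymptotic. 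The only real subtlety is arranging the chain of transformations so that the scale $\sqrt{b(a+b)}$ appears naturally as the effective constant barrier; once the explicit series representation is in hand, Kolmogorov--Smirnov and Poisson summation dispatch the rest in a routine manner.
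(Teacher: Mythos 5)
Your proof is correct, and the front end differs genuinely from the paper's. The paper obtains the identity $P(\sup_{0\le t\le 1}(|B^{0,1,0}_t|-at)<b)=\sum_{k\in\mathbb Z}(-1)^k e^{-2k^2 b(a+b)}$ by citing Theorem~7 of Salminen--Yor directly, whereas you rederive it from scratch via the chain ``bridge $\to$ BM by time inversion, rescale, BM $\to$ bridge by a second time inversion,'' which collapses the affine barrier $b+(a+b)s$ to a constant barrier $\sqrt{b(a+b)}$ and then invokes the classical Kolmogorov--Smirnov formula for $\sup_v|X_v|$. I checked the transformations: under $s=t/(1-t)$ the condition $|B^{0,1,0}_t|<at+b$ becomes $|W_s|<b+(a+b)s$; the scaling $\tilde W_u=\sqrt{(a+b)/b}\,W_{bu/(a+b)}$ produces $|\tilde W_u|<\sqrt{b(a+b)}(1+u)$; and the reverse inversion lands on $|X_v|<\sqrt{b(a+b)}$. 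All three steps are exact, and your derivation is a self-contained proof of the series formula that the paper only cites, which is a nice bonus. The back end is the same idea in different clothing: the paper rewrites the series as $\vartheta_4$ and applies Jacobi's imaginary transformation $\vartheta_4(0\mid\tau)=(-i\tau)^{-1/2}\vartheta_2(0\mid-1/\tau)$, then reads off the leading term of $\vartheta_2$; you apply Poisson summation $\sum_n(-1)^n e^{-\alpha n^2}=\sqrt{\pi/\alpha}\sum_k e^{-\pi^2(k+1/2)^2/\alpha}$ and keep the two dominant terms $k=0,-1$. These are literally the same identity, so the asymptotic extraction is equivalent; your phrasing has the small advantage of making explicit that the error term is exponentially smaller by a factor $e^{-\pi^2/(b(a+b))}$, which the paper leaves implicit in the ``$\vartheta_2(0\mid\tau)\sim 2e^{i\pi\tau/4}$'' step.
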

\noindent\textit{Proof.} 
According to Theorem 7 in  \cite{salminen2011hitting}, we have 
\begin{equation}\label{RBMa=0series}
P\Big(\sup_{0\leq t\leq 1}\Big(|B_t^{0,1,0}|-at\Big)<b\Big)=\sum_{k=-\infty}^{\infty}(-1)^ke^{-2k^2b(a+b)}.
\end{equation}
To have a better understanding of this expression for small values of $b(a+b)$, we need to introduce the Jacobi theta functions of type 2, $\vartheta_2(z|\tau)$ and type 4, $\vartheta_4(z|\tau)$ and their relationship. A good reference would be Section 16 of \cite{abramowitz1948handbook}. We have
\[
\vartheta_2(z|\tau):=2e^{i\pi\tau/4}\sum_{k=0}^{\infty}e^{i\pi\tau k(k+1)}\cos((2k+1)z),
\]
\[
\vartheta_4(z|\tau):=\sum_{k=-\infty}^{\infty}(-1)^ke^{i\pi\tau k^2}e^{2kiz}.
\]
As a special case of Jacobi's imaginary transformation,
\[
\vartheta_4(0|\tau)=(-i\tau)^{-1/2}\vartheta_2\Big(0|-\frac{1}{\tau}\Big).
\]
Then (\ref{RBMa=0series}) can be written in terms of Jacobi theta functions,
\[
P\Big(\sup_{0\leq t\leq1}\Big(|B^{0,1,0}_t|-at\Big)<b\Big)=\vartheta_4\Big(0|\frac{2b(a+b) i}{\pi}\Big)=\sqrt{\frac{\pi}{2b(a+b)}}\vartheta_2\Big(0|\frac{\pi i}{2b(a+b)}\Big).\]
We want to explore the limiting behavior of $P(\sup_{0\leq t\leq1}|B^{0,1,0}_t|-at<b)$ as $b(a+b)$ approaches 0.
By the series representation for the theta function $\vartheta_2$, if $e^{i\pi\tau}\in\mathbb{R}$  and $e^{i\pi\tau}\rightarrow 0$, then
\[
\vartheta_2(0|\tau)\sim 2e^{i\pi\tau/4} .
\]
Therefore, as $b(a+b)$ approaches $0$ from above,
\[
P\Big(\sup_{0\leq t\leq 1}\Big(|B_t^{0,1,0}|-at\Big)<b\Big)=\sqrt{\frac{\pi}{2b(a+b)}}\vartheta_2\Big(0|\frac{\pi i}{2b(a+b)}\Big)\sim \sqrt{\frac{2\pi}{b(a+b)}}e^{-\frac{\pi^2}{8b(a+b)}}.
\]
\qedwhite

Below we will present a stochastic dominance relation between Brownian bridges with the same length but different endpoints.

\begin{Lemma}\label{stochasticdominanceBM}
For every $t>0$, if $x_1\geq x_2$ and $y_1\geq y_2$, then $\{B^{x_1,t,y_1}_r\}_{0\leq r\leq t}$ stochastically dominates $\{B^{x_2,t,y_2}_r\}_{0\leq r\leq t}$. In other words, these two processes can be constructed on some probability space such that almost surely for all $r\in [0,t]$, 
\[
B^{x_1,t,y_1}_r\geq B^{x_2,t,y_2}_r.
\]
\end{Lemma}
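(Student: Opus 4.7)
The plan is to construct both Brownian bridges on a common probability space using a single standard Brownian bridge, after which the desired pathwise inequality becomes a deterministic statement about the linear interpolation terms.

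Concretely, I would start from the standard representation that writes a Brownian bridge from $u$ to $v$ over time $t$ as the sum of its deterministic linear mean plus a mean-zero bridge:
\[
B^{u,t,v}_r = u\Big(1-\tfrac{r}{t}\Big) + v\,\tfrac{r}{t} + \sqrt{t}\,B^{0,1,0}_{r/t},\qquad 0\leq r\leq t,
\]
where $\{B^{0,1,0}_s\}_{0\leq s\leq 1}$ is a standard Brownian bridge. Using the \emph{same} realization of $\{B^{0,1,0}_s\}$ to define both processes couples $\{B^{x_1,t,y_1}_r\}$ and $\{B^{x_2,t,y_2}_r\}$ on one probability space with the correct marginal laws. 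Their difference is then purely deterministic:
\[
B^{x_1,t,y_1}_r - B^{x_2,t,y_2}_r = (x_1-x_2)\Big(1-\tfrac{r}{t}\Big) + (y_1-y_2)\,\tfrac{r}{t},
\]
and since $x_1\geq x_2$, $y_1\geq y_2$, and $r/t\in[0,1]$, each coefficient is nonnegative, yielding the almost sure pathwise inequality for all $r\in[0,t]$.

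There is essentially no obstacle here; the only thing to be careful about is making sure the representation of the bridge one uses is genuinely a Brownian bridge with the claimed law, which is standard and can be verified either via a direct covariance computation or by citing the usual construction in Borodin and Salminen \cite{borodin2012handbook}. Once that is in place, the proof collapses to the one-line algebraic identity above.
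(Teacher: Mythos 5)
Your proposal is correct and uses essentially the same decomposition as the paper: write each bridge as its linear mean plus a shared bridge from $0$ to $0$ (you phrase the latter via the rescaled standard bridge $\sqrt{t}\,B^{0,1,0}_{r/t}$, the paper uses $B^{0,t,0}_r$ directly, but these are the same process in law), couple through the common bridge, and observe that the difference is a deterministic nonnegative linear interpolation.
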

\begin{proof}
According to IV.21 of \cite{borodin2012handbook}, after some computations,  $\{B^{x_1,t,y_1}_r\}_{0\leq r \leq t}$ and $\{B^{x_2,t,y_2}_r\}_{0\leq r\leq t}$ can be expressed in terms of $\{B^{0,t,0}_r\}_{0\leq r\leq t}$,
\[
B^{x_i,t,y_i}_r=\frac{t-r}{t}x_i+\frac{r}{t}y_i+B^{0,t,0}_r, \quad\text{for}\; 0\leq r\leq t \;\text{and}\; i=1,2.
\]
Since $x_1\geq x_2$ and $y_1\geq y_2$, $\{B^{x_1,t,y_1}_r\}_{0\leq r\leq t}$ stochastically dominates $\{B^{x_2,t,y_2}_r\}_{0\leq r\leq t}$ from the above coupling.
\end{proof}

Next we are going to introduce results pertaining to the Bessel-3 process. The Bessel-3 process is defined to be the radial part of a three-dimensional Brownian motion. Since only the Bessel-3 process will be considered in this paper, below we will write the Bessel process for convenience. Also, the Bessel process is identical in law to a one dimensional Brownian motion conditioned to avoid the origin.  Let $p_t(x,y)$ be the transition density of a Bessel process started from $x$ at time $t$. We have 
\[
p_t(x,y)=\frac{y}{x}\frac{1}{\sqrt{2\pi t}} e^{-\frac{(y-x)^2}{2t}}\Big(1-e^{-2xy/t}\Big).
\]
Similarly to the Brownian motion setting, define $\{X^{x,u,y}_t\}_{0\leq t\leq u}$ as a Bessel bridge from $x$ to $y$ over time $u$ and $p_t^{x,u,y}(z)$  as its transition density at time $t$.  Specifically, $\{X^{0,1,0}_t\}_{0\leq t\leq 1}$ is a Brownian excursion. It is shown in the proof of Lemma 7 in \cite{harris2007survival} that a Bessel bridge is identical in law to a Brownian bridge that is conditioned to avoid the origin. Since a time-reversed Brownian bridge is also a Brownian bridge,  we see that a time-reversed Bessel bridge is also a Bessel bridge. To be more precise,
\[
\{X^{x,t,z}_{t-s}\}_{0\leq s\leq t}\xlongequal{d}\{X^{z,t,x}_{s}\}_{0\leq s \leq t}.
\]

As in the Brownian motion case, there is also a stochastic dominance relation between Bessel bridges.  The technical tool we use here is the comparison theorem for solutions of stochastic differential equations and a good reference for it is \cite{ikeda1977comparison}.

\begin{Lemma}\label{stochasticdominance}
If $x_1\geq x_2\geq 0$ and $y_1\geq y_2\geq 0$, then $\{X^{x_1,1,y_1}_r\}_{0\leq r\leq 1}$ stochastically dominates $\{X^{x_2,1,y_2}_r\}_{0\leq r\leq 1}$. In other words, these two processes can be constructed on some probability space such that almost surely for all $r\in [0,1]$,
\[
X^{x_1,1,y_1}_r\geq X^{x_2,1,y_2}_r.
\]
\end{Lemma}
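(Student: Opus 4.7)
The plan is to realize each Bessel-3 bridge as the strong solution of an SDE driven by a Brownian motion and to invoke the Ikeda--Watanabe comparison theorem. By an $h$-transform from the Bessel-3 SDE $dR_t = R_t^{-1}\,dt + dW_t$ using the Bessel transition density $p_s(x,y)$ recalled above, the bridge $\{X_t^{x,1,y}\}_{0\leq t < 1}$ is the strong solution of
\[
dX_t = \mu(t, X_t; y)\,dt + dW_t, \qquad X_0 = x,
\]
in which the two $1/x$ contributions cancel, leaving
\[
\mu(t, x; y) \;=\; \frac{y-x}{1-t} \,+\, \frac{2y/(1-t)}{e^{2xy/(1-t)}-1}.
\]

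The first step is to check that $y\mapsto \mu(t,x;y)$ is nondecreasing for fixed $t\in[0,1)$ and $x>0$. With the abbreviation $u = 2xy/(1-t)$, a direct computation yields
\[
\partial_y\mu(t,x;y) \;=\; \frac{e^{2u}-1-2ue^u}{(1-t)(e^u-1)^2},
\]
and the numerator $g(u) := e^{2u}-1-2ue^u$ is nonnegative on $[0,\infty)$: indeed $g(0)=0$ and $g'(u) = 2e^u(e^u-1-u)\geq 0$ by convexity of the exponential. Drift monotonicity in hand, I couple two copies of the bridge SDE with a common starting point $x_1$ and endpoints $y_1\geq y_2$ to the same Brownian driving path; Ikeda--Watanabe then gives the pathwise inequality $X_t^{x_1,1,y_1}\geq X_t^{x_1,1,y_2}$. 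The companion dominance in the starting point, $X_t^{x_1,1,y_2}\geq X_t^{x_2,1,y_2}$, is obtained by applying the same endpoint comparison to the time-reversed bridges and using the identity $\{X^{x,1,y}_{1-s}\}\stackrel{d}{=}\{X^{y,1,x}_s\}$ recorded earlier. Chaining the two pathwise bounds via their common middle marginal produces the desired coupling.

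The main technical obstacle is that $\mu(t,x;y)$ is singular at $t=1$, so Ikeda--Watanabe does not apply verbatim on the closed interval $[0,1]$. I would handle this by running the comparison on $[0,1-\delta]$ for each $\delta>0$ and then passing to $\delta\downarrow 0$ using almost sure continuity of the bridges up to time $1$. The degenerate cases $x_2=0$ or $y_2=0$ are recovered by approximating these parameters from above by strictly positive values and appealing to weak continuity of the law of the Bessel bridge in its endpoints.
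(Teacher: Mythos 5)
Your proposal is correct but takes a genuinely different route. The paper works with the \emph{squared} Bessel bridges $Y_r:=(X_r)^2$: by (0.27) of Pitman and It\^o's formula these solve an SDE with drift $b_i(t,x)=3+\tfrac{2y_i\sqrt{x}-2x}{1-t}$ and diffusion coefficient $\sigma(x)=2\sqrt{x}$, and the Ikeda--Watanabe comparison theorem is applied exactly once. This is more economical in two ways. First, $b_i$ is transparently increasing in $y_i$, and since the ordering of initial conditions $x_1^2\geq x_2^2$ comes for free, a \emph{single} application of the comparison theorem yields dominance in both endpoints at once; you, working on the linear bridge, obtain only the terminal-endpoint comparison from the drift argument and must invoke time reversal to handle the starting point, then glue the two pathwise couplings through the common intermediate marginal. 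Second, after squaring, the irregularity at the origin sits in the diffusion coefficient $2\sqrt{x}$, which is exactly what the modulus-of-continuity hypothesis built into Ikeda--Watanabe Theorem 1.1 is designed to accommodate; by contrast, your $h$-transformed drift $\mu(t,x;y)$ still blows up like $1/x$ as $x\downarrow 0$ --- the ``cancellation of the two $1/x$ contributions'' is only notational, since $1/(e^u-1)\sim 1/u$ re-introduces the singularity --- so your comparison must be run after localizing the process away from $0$, and pathwise uniqueness for the singular bridge SDE needs to be established or cited. Your treatment of the $t=1$ singularity by stopping at $1-\delta$ and letting $\delta\downarrow 0$ is the same device the paper uses, and your monotonicity computation for $\mu$ in $y$ is correct. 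None of the extra steps in your argument is wrong, but each is genuine work that the squared-bridge substitution eliminates.
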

\begin{proof}
It is sufficient to show that for every $0<\delta<1$, the process $\{X^{x_1,1,y_1}_r\}_{0\leq r\leq 1-\delta}$ stochastically dominates $\{X^{x_2,1,y_2}_r\}_{0\leq r\leq 1-\delta}$. Note that the Bessel bridge is nonnegative. Instead of working with Bessel bridges directly, we will prove the lemma for squared Bessel bridges, for which the comparison theorem can be applied readily. 
\noindent
Define squared Bessel bridges for $0\leq r\leq 1-\delta$,
\[
Y^{x_i^2,1,y_i^2}_r:=(X^{x_i,1,y_i}_r)^2, \quad \text{for}\; i=1,2. 
\]
By (0.27) of \cite{pitman2006combinatorial} and Ito's formula, letting $\{B_r\}_{r\geq 0}$ be a standard Brownian motion, squared Bessel bridges $\{Y_r^{x_1^2,1,y_1^2}\}_{0\leq r\leq 1-\delta}$ and $\{Y_r^{x_2^2,1,y_2^2}\}_{0\leq r\leq 1-\delta}$ can be respectively represented as pathwise unique solutions over $[0,1-\delta]$ of the stochastic differential equations
\[
Y^{x_i^2,1,y_i^2}_0=x_i^2, \quad dY^{x_i^2,1,y_i^2}_r=\bigg(3+\frac{2y_i\sqrt{Y^{x_i^2,1,y_i^2}_r}-2Y^{x_i^2,1,y_i^2}_r}{1-r}\bigg)dr+2\sqrt{Y^{x_i^2,1,y_i^2}_{r}}dB_r,\quad\text{for} \; i=1,2.
\]
Set
\[
b_i(t,x)=3+\frac{2y_i\sqrt{x}-2x}{1-t} \;\text{for}\; i=1,2, \quad \sigma(t,x)=2\sqrt{x}.
\]
We see that for $x,y\in \mathbb{R}$ and $t\geq 0$,
\[
|\sigma(t,x)-\sigma(t,y)|=2|\sqrt{x}-\sqrt{y}|\leq 2\sqrt{|x-y|}=:\phi(|x-y|)
\]
where $\phi$ is an increasing function such that $\phi(0)=0$ and 
\[
\int_{0^+}\phi(x)^{-2}dx=\infty.
\]
Furthermore, because $b_i(t,x)$ for $i=1,2$ and $\sigma(t,x)$ are continuous on $[0,1-\delta)\times \mathbb{R}$,  we have $\{Y^{x_1^2,1,y_1^2}_r\}_{0\leq r\leq 1-\delta}$ stochastically dominates $\{Y^{x_2^2,1,y_2^2}_r\}_{0\leq r\leq 1-\delta}$ by Theorem 1.1 of \cite{ikeda1977comparison}. Finally after taking the square root, the lemma holds for Bessel bridges.
\end{proof}

There is also one more fact on the relationship between the Bessel bridge and Bessel process, which is borrowed from Lemma 7 of \cite{harris2007survival}.
\begin{Lemma}\label{weakconvergence}
As $t\rightarrow \infty$, the Bessel bridge converges to the Bessel process in the Skorokhod topology on $D[0,\infty)$, i.e.
\[
P^{z,t,x}_{BES} \Rightarrow P^{z}_{BES}.
\]
\end{Lemma}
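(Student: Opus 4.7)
The plan is to realize the Bessel-3 bridge as a Doob $h$-transform of the Bessel-3 process on each truncated interval $[0,T]$, and then send the terminal time $t\to\infty$ using dominated convergence on the resulting Radon--Nikodym density. Since both processes have continuous sample paths, weak convergence in the Skorokhod topology on $D[0,\infty)$ reduces (by standard projection and tightness arguments) to weak convergence of the restrictions to $C[0,T]$ for each fixed $T>0$.

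For $T<t$, the Markov property of the Bessel bridge yields
\[
\frac{dP^{z,t,x}_{BES}}{dP^{z}_{BES}}\bigg|_{\mathcal{F}_T}=\frac{p_{t-T}(X_T,x)}{p_t(z,x)},
\]
where $p_u(a,b)=\frac{b}{a}\frac{1}{\sqrt{2\pi u}}e^{-(b-a)^2/(2u)}(1-e^{-2ab/u})$ is the Bessel-3 transition density introduced earlier. A first-order Taylor expansion of $1-e^{-2X_T x/(t-T)}$ shows that the ratio tends pointwise to $1$ as $t\to\infty$. The integrability is supplied by the observation that the factor $1/a$ in the transition density cancels the $a$ produced by the inequality $1-e^{-s}\le s$, so
\[
p_{t-T}(X_T,x)\le \frac{2x^2}{\sqrt{2\pi}\,(t-T)^{3/2}}
\]
uniformly in $X_T$, while $p_t(z,x)\sim 2x^2/(\sqrt{2\pi}\,t^{3/2})$. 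Hence the Radon--Nikodym derivative is bounded uniformly in $t$ large, and for any bounded continuous functional $F$ on $C[0,T]$ dominated convergence gives $E^{z,t,x}_{BES}[F]\to E^{z}_{BES}[F]$, which is the desired weak convergence.

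The main obstacle is the uniform integrability step: one must verify that the competition between the small-$X_T$ blowup of $1/X_T$ in the transition density and the small-$X_T$ vanishing of $1-e^{-2X_T x/(t-T)}$ produces a bound independent of the path value at time $T$, together with the asymptotic matching of numerator and denominator. Both points are elementary once the explicit density formula is in hand, so no serious analytic difficulty arises beyond careful bookkeeping in the comparison of $p_{t-T}$ and $p_t$.
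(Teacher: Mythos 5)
Your proof is correct. Note first that the paper itself does not give a proof of this lemma; it simply cites Lemma 7 of Harris and Harris (2007), so there is no internal argument to compare against. Your route --- realizing the Bessel bridge on $[0,T]$ as a Doob $h$-transform of the unconditioned Bessel process with density $p_{t-T}(X_T,x)/p_t(z,x)$, bounding the numerator by $2x^2/\bigl(\sqrt{2\pi}\,(t-T)^{3/2}\bigr)$ via $1-e^{-s}\le s$ and $e^{-(x-X_T)^2/(2(t-T))}\le 1$, matching this against $p_t(z,x)\sim 2x^2/\bigl(\sqrt{2\pi}\,t^{3/2}\bigr)$ to get a bound uniform for $t\ge 2T$, and then invoking dominated convergence --- is the standard and natural way to establish the result. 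Two small points worth making explicit. First, the pointwise limit of the density equals $1$ only because $X_T>0$ almost surely under $P^z_{BES}$ for $z>0$; the cancellation of the $z/X_T$ prefactor against the ratio of the $1-e^{-2ax/u}$ terms degenerates if $X_T$ could hit $0$, so the a.s. positivity of the Bessel-3 process at time $T$ should be noted. Second, the reduction from the $D[0,\infty)$ statement to convergence on each $C[0,T]$ is valid here because both the bridge and the limiting Bessel process have continuous paths and the limit is continuous: for a.s. continuous limits, Skorokhod convergence on $D[0,\infty)$ is equivalent to weak convergence of the restrictions to $C[0,T]$ for every $T$, and your RND computation delivers precisely that --- it handles tightness and finite-dimensional convergence in one stroke since it gives $E^{z,t,x}_{BES}[F]\to E^z_{BES}[F]$ for arbitrary bounded continuous functionals $F$ on $C[0,T]$.
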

 
\section{Upper bound}
\subsection{Proof outline}
In this section, we show the upper bound (\ref{limsup}). Throughout this section, $P^x_{-\rho}$ is the probability measure for the branching Brownian motion started from a single particle at $x>0$ with drift $-\rho$ and absorbed at $0$. Let $\mathcal{N}_t^{-\rho}$ be the set of surviving particles at time $t$. The configuration of particles at time $t$ under $P^x_{-\rho}$ is written as $\{Y_u(t): u\in \mathcal{N}_t^{-\rho}\}$.  For a particle $u\in \mathcal{N}_t^{-\rho}$, denote by $O_u$ the time that the ancestor of $u$ branches off the spine. By convention, if $u$ is the spinal particle, $O_u=t$.
Defined in (\ref{Q}), $Q^x$ is the law of a branching diffusion with the spine which initiates from a single particle at $x>0$. Under the measure $Q^x$, let $\{\xi_t\}_{t\geq 0}$ be the trajectory of the spinal particle which diffuses as a Bessel-3 process. Define $\{\zeta_s\}_{0\leq s \leq t}=\{\xi_{t-s}\}_{0\leq s\leq t}$ to be the reversed trajectory of the spinal particle. We denote by $Q^{x,t,z}$ the law of the branching process whose spine starts from $x$ and is conditioned to end up at $z$ at time $t$, i.e. 
\[
Q^{x,t,z}(\cdot)=Q^x(\cdot|\xi_t=z)=Q^x(\cdot|\zeta_0=z).
\]

First we will control the case where the position of the spinal particle at time $t$ is greater than $\varepsilon^{-1/2}$, which is Lemma \ref{endpointofspine} below. 
\begin{Lemma}\label{endpointofspine}
For all $t$ and all $\varepsilon$ sufficiently small, there exists a positive constant $C_3$ such that
\[
\sqrt{2\pi t^3}Q^x\bigg[\frac{1}{\sum_{u}Y_u(t)e^{\rho Y_u(t)}}; \xi_{t}\geq \varepsilon^{-1/2}\bigg]\leq e^{-C_3/\sqrt{\varepsilon}}.
\] 
\end{Lemma}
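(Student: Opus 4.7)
The plan is to drop all terms in $\sum_{u \in \mathcal{N}_t^{-\rho}} Y_u(t) e^{\rho Y_u(t)}$ except the contribution of the spine itself. Under $Q^x$ the spine follows a Bessel-3 path, so it never touches the origin and is always a bona fide surviving particle of the branching Brownian motion with absorption; in particular the spinal particle lies in $\mathcal{N}_t^{-\rho}$ with position $\xi_t$. This gives the pointwise bound
\[
\frac{1}{\sum_{u \in \mathcal{N}_t^{-\rho}} Y_u(t) e^{\rho Y_u(t)}} \leq \frac{1}{\xi_t e^{\rho \xi_t}},
\]
which reduces the problem to a one-dimensional computation involving only the marginal law of $\xi_t$.

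Next, I would plug in the explicit Bessel-3 density $p_t(x,y) = \frac{y}{x\sqrt{2\pi t}} e^{-(y-x)^2/(2t)}(1 - e^{-2xy/t})$ recalled in Section~2. Two algebraic cancellations do all the work. First, the $y$ in the numerator of $p_t(x,y)$ kills the $y$ in the denominator of $1/(y e^{\rho y})$ (this is exactly why the tilted process is a good one for the problem). Second, applying the elementary inequality $1 - e^{-u} \leq u$ to the factor $1 - e^{-2xy/t}$ produces a factor of $1/t$ that cancels against the $\sqrt{2\pi t^3}/\sqrt{2\pi t} = t$ coming from the explicit prefactor. After these two steps one is left with
\[
\sqrt{2\pi t^3}\, Q^x\!\left[\frac{1}{\xi_t e^{\rho \xi_t}}\,;\, \xi_t \geq \varepsilon^{-1/2}\right] \leq 2 \int_{\varepsilon^{-1/2}}^{\infty} y\, e^{-\rho y - (y-x)^2/(2t)}\, dy.
\]

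To obtain a bound that is uniform in $t$ I would simply throw away the Gaussian factor via $e^{-(y-x)^2/(2t)} \leq 1$, which is legitimate precisely because it can only help. A single integration by parts then yields
\[
2 \int_{\varepsilon^{-1/2}}^{\infty} y\, e^{-\rho y}\, dy = \left(\frac{2\varepsilon^{-1/2}}{\rho} + \frac{2}{\rho^{2}}\right) e^{-\rho/\sqrt{\varepsilon}}.
\]
Since $\rho = \sqrt{2+2\varepsilon} \to \sqrt{2}$ as $\varepsilon \downarrow 0$, the polynomial prefactor is absorbed at the cost of a tiny loss in the exponential rate, and the right-hand side is bounded by $e^{-C_3/\sqrt{\varepsilon}}$ for any fixed $C_3 < \sqrt{2}$ once $\varepsilon$ is small enough. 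Honestly there is no genuine obstacle here: the only mildly subtle point is noticing that the two algebraic features of the Bessel-3 density (the linear $y$ in the numerator and the $1 - e^{-2xy/t}$ factor) conspire to absorb the entire $\sqrt{2\pi t^3}$ prefactor, after which the Gaussian is free and one reads off the exponent $-\sqrt{2}/\sqrt{\varepsilon}$.
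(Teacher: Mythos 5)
Your proposal is correct and follows essentially the same line as the paper's proof: bound the sum below by the spine's contribution $\xi_t e^{\rho\xi_t}$, integrate against the explicit Bessel-3 density, use $1-e^{-u}\le u$ and $e^{-(y-x)^2/(2t)}\le 1$ to cancel the $\sqrt{2\pi t^3}$ prefactor, and evaluate $\int_{\varepsilon^{-1/2}}^\infty y e^{-\rho y}\,dy$ to get the rate $\rho\ge\sqrt 2$. The only cosmetic difference is that you discard the Gaussian factor one step later than the paper does.
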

\noindent As a result, we only need to deal with the case where the spine ends up near the origin. To prove (\ref{limsup}), it is sufficient to show that there exists a constant $C_4$ such that for $\varepsilon$ small enough,
\begin{equation}
\limsup_{t\rightarrow \infty}\sqrt{2\pi t^3}Q^{x}\bigg[\frac{1}{\sum_{u}Y_u(t)e^{\rho Y_u(t)}};\xi_t\leq \varepsilon^{-1/2}\bigg]\leq e^{-C_{4}/\sqrt{\varepsilon}}.
\label{subgoal}
\end{equation}
It remains to prove equation (\ref{subgoal}). To obtain an upper bound, we only take particles that branch off the spine within the last $\varepsilon^{-3/2}$ time into account. Conditioned on the spine being at $y$ at time $t-\varepsilon^{-3/2}$, we restart the process from $y$ and let the process run for time $\varepsilon^{-3/2}$. Essentially, we will work on bounding
\begin{equation}\label{ubsmallz}
Q^{y}\bigg[\frac{1}{\sum_{u}Y_u(\varepsilon^{-3/2})e^{\rho Y_u(\varepsilon^{-3/2})}}\Big| \xi_{\varepsilon^{-3/2}}=z\bigg].
\end{equation}
Considering the reversed trajectory of the spine, let
\[M=\sup_{0\leq s\leq \varepsilon^{-3/2}}\Big(\varepsilon \zeta_s-\frac{1}{\rho}\varepsilon^2 s\Big).\]
For any positive constant $C>2\pi$, we will divide the proof for (\ref{subgoal}) into the small $M$ and large $M$ cases, 
\begin{align}\label{ubseparatetwoparts}
(\ref{ubsmallz})
&=Q^{y}\bigg[\frac{1}{\sum_{u}Y_u(\varepsilon^{-3/2})e^{\rho Y_u(\varepsilon^{-3/2})}}1_{\{M\geq2C\sqrt{\varepsilon}\}}\Big| \xi_{\varepsilon^{-3/2}}=z\bigg]\nonumber\\
&\quad +Q^{y}\bigg[\frac{1}{\sum_{u}Y_u(\varepsilon^{-3/2})e^{\rho Y_u(\varepsilon^{-3/2})}}1_{\{M < 2C\sqrt{\varepsilon}\}}\Big| \xi_{\varepsilon^{-3/2}}=z\bigg].
\end{align}
For the large $M$ case, the main strategy is as follows:
\begin{itemize}
\item If $M\geq 2C\sqrt{\varepsilon}$, then $\varepsilon \zeta_s-\frac{1}{\rho}\varepsilon^2 s$ stays above $C\sqrt{\varepsilon}$ for a while. In other words, the position of the spine at time $t-s$ satisfies $\xi_{t-s}\geq C/\sqrt{\varepsilon}+\varepsilon s/\rho$ for some time. During that time, many particles branch off the spine.
\item For $s\in [0,\varepsilon^{-3/2}]$, each particle that branches off the spine at time $t-s$ and is located to the right of $C/\sqrt{\varepsilon}+\varepsilon s/\rho$ will have a descendant at time $t$ above $C/(4\sqrt{\varepsilon})$ with some nonzero probability which is independent of $\varepsilon$ and $s$. When this occurs, it will follow that, for sufficiently small $\varepsilon$,
\[
\frac{1}{\sum_{u}Y_u(t)e^{\rho Y_u(t)}}\leq \frac{4\sqrt{\varepsilon}}{C} e^{-C\rho/(4\sqrt{\varepsilon})}\leq e^{-C\rho/(5\sqrt{\varepsilon})}.
\]
\item Taking the number of branching events of the spine into consideration, the probability that there exists at least one particle which stays to the right of $C/(4\sqrt{\varepsilon})$ at time $t$ converges to 1 as $\varepsilon$ goes to 0.
\end{itemize}
From the above strategy, we can see why the $\varepsilon^{-3/2}$ time period is considered here. The length of this time period has to be large enough such that a considerable number of particles branch off the spine and also small enough such that if a particle branches off the spine during that time, the position of its descendant at time $t$ won't be too far from its branching position. Essentially, for $C>2\pi$, we need to prove the following two lemmas.

\begin{Lemma}\label{largeMubPoisson}
Let $\{t-t_i\}_{i=1}^{N_\varepsilon}$ be the set of times that particles branch off the spine between time $t-\varepsilon^{-3/2}$ and $t$. Then there exists a positive constant $C_{5}$ such that  for $\varepsilon$ sufficiently small, for every $y\in(0,\infty)$ and $z \in (0,\varepsilon^{-1/2}]$, we have
\begin{equation*}
Q^{y}\Bigg(\bigg\{\sum_{i=1}^{N_\varepsilon}1_{\{\varepsilon \zeta_{t_i}-\frac{1}{\rho}\varepsilon^{2}t_i\geq C\sqrt{\varepsilon}\}}\leq \frac{1}{\sqrt{\varepsilon}}\bigg\}\cap \{M\geq 2C\sqrt{\varepsilon}\}\bigg|\xi_{\varepsilon^{-3/2}}=z\Bigg)\leq \Big(6+\frac{2}{yz}\Big)e^{-C_5/\sqrt{\varepsilon}}.
\end{equation*}
\end{Lemma}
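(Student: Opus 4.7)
The plan is to separate the randomness of the spinal trajectory from the randomness of the branching events along the spine. Set $T:=\varepsilon^{-3/2}$ and $f(s):=\varepsilon\zeta_s-\varepsilon^2 s/\rho$. Under $Q^{y,T,z}$ the reversed spinal path $\{\zeta_s\}_{0\le s\le T}$ is a Bessel bridge from $z$ to $y$ of length $T$, and conditionally on $\zeta$, the elapsed branching times $\{t_i\}$ form a Poisson point process of rate $2$ on $[0,T]$. Consequently, writing $T_C:=\int_0^T 1_{\{f(s)\ge C\sqrt{\varepsilon}\}}\,ds$, the count $N_1:=\sum_i 1_{\{f(t_i)\ge C\sqrt{\varepsilon}\}}$ is conditionally (given $\zeta$) Poisson with parameter $2T_C$.

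Fixing a threshold $L:=K/\sqrt{\varepsilon}$ with $K$ a sufficiently large absolute constant, I would split
\[
Q^{y,T,z}\bigl(\{N_1\le 1/\sqrt{\varepsilon}\}\cap\{M\ge 2C\sqrt{\varepsilon}\}\bigr)\le Q^{y,T,z}(T_C\ge L,\,N_1\le 1/\sqrt{\varepsilon})+Q^{y,T,z}(T_C<L,\,M\ge 2C\sqrt{\varepsilon}).
\]
The first summand is handled by the Chernoff lower-tail estimate applied to $\text{Poisson}(2T_C)$ with $2T_C\ge 2K/\sqrt{\varepsilon}$, which gives $\exp(\varepsilon^{-1/2}(1+\log(2K)-2K))\le e^{-c_1/\sqrt{\varepsilon}}$ once $K$ exceeds some fixed value.

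The second summand is the more delicate one. Since $f(0)=\varepsilon z\le\sqrt{\varepsilon}<C\sqrt{\varepsilon}$ and $f$ is continuous, on $\{M\ge 2C\sqrt{\varepsilon}\}$ the stopping time $\sigma:=\inf\{s\le T:f(s)\ge 2C\sqrt{\varepsilon}\}$ is finite, and $\{T_C<L\}$ then forces one of two scenarios: the \emph{late-hitting} case $\sigma>T-L$, where the bridge reaches level $2C\sqrt{\varepsilon}$ only in its last $L$ units of time, or the \emph{fast-drop} case $\sigma\le T-L$, where the bridge descends from $\zeta_\sigma\ge 2C/\sqrt{\varepsilon}+\varepsilon\sigma/\rho$ to below $C/\sqrt{\varepsilon}+\varepsilon s/\rho$ within the subsequent $L$ units of time. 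In the late-hitting case, time-reversing to work with the forward bridge $\xi$ from $y$ to $z$ recasts the event as ``$\xi$ reaches height $h\sim 1/\sqrt{\varepsilon}$ within time $L$'', which I bound by
\[
P^{y,T,z}_{\text{bridge}}\bigl(\sup_{r\le L}\xi_r\ge h\bigr)=E^{y}_{\text{BES}}\Big[1_{\{\sup_{r\le L}R_r\ge h\}}\,\frac{p_{T-L}(R_L,z)}{p_T(y,z)}\Big]
\]
and combining the classical Bessel-3 hitting bound $P^y(\sup_{r\le L}R_r\ge h)\le (h/y)\exp(-(h-y)^2/(2L))$ with the asymptotic $p_T(y,z)\sim\sqrt{2/\pi}\,yz/T^{3/2}$ (valid since $yz/T=O(\sqrt{\varepsilon})$ in the regime of interest) to produce a Gaussian factor $e^{-c_2/\sqrt{\varepsilon}}$ together with the $1/(yz)$ polynomial correction. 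In the fast-drop case, the strong Markov property at $\sigma$ yields a Bessel bridge from $\zeta_\sigma$ to $y$ of length $T-\sigma\ge L$, and Lemma~\ref{stochasticdominance} allows comparison with a bridge on $[\sigma,\sigma+L]$ whose endpoints are controlled, after which a reflection/Gaussian estimate bounds the probability of a downward displacement of size $\sim C/\sqrt{\varepsilon}$ over time $L=K/\sqrt{\varepsilon}$ by $e^{-c_3/\sqrt{\varepsilon}}$.

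Choosing $C_5$ to be a small positive fraction of $\min(c_1,c_2,c_3)$ and absorbing the Bessel-density prefactors into $6+2/(yz)$ would close out the argument. The main obstacle is the fast-drop case: executing the strong Markov step uniformly in both $\zeta_\sigma$ and $\sigma$ requires the stochastic-dominance comparison to be set up so that the Bessel bridge's drift toward the possibly small endpoint $y$ does not erode the Gaussian exponent, and uniformity in $y\in(0,\infty)$ and $z\in(0,\varepsilon^{-1/2}]$ must be carefully preserved throughout the density-ratio estimates in the late-hitting case.
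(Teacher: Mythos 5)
Your Chernoff/Poisson step to dispose of the occupation-time event matches the paper's treatment of its term $J_1$, with the inessential difference that you use a one-sided Poisson tail bound with a large constant $K$ while the paper takes a two-sided Chernoff bound with the threshold $Y\ge 2/\sqrt{\varepsilon}$. The real divergence is in the second term, and there the proposal has two genuine gaps.

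First, the \emph{fast-drop} case. You split at the stopping time $\sigma=\inf\{s:f(s)\ge 2C\sqrt\varepsilon\}$ by whether $\sigma\le T-L$ or $\sigma>T-L$, with drop window of length $L=K/\sqrt{\varepsilon}$. But on $\{\sigma\le T-L\}$ the remaining bridge after $\sigma$ has length $T-\sigma\ge L$, and the drop window is also $L$ --- the same order. When $T-\sigma$ is comparable to $L$ and $y$ is small, the Bessel bridge from $\zeta_\sigma\approx 2C/\sqrt\varepsilon$ to $y$ over time $T-\sigma$ has drift of order $-\zeta_\sigma/(T-\sigma)\cdot L\approx -C/\sqrt\varepsilon$ over the window, i.e.\ of the same size as the required downward excursion. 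The conditional drop probability given $(\sigma,\zeta_\sigma)$ is then \emph{not} uniformly small, so the strong Markov decomposition does not close. You flag exactly this issue as ``the main obstacle'' but do not resolve it, and in fact with this choice of $L$ it cannot be resolved by stochastic dominance alone. The paper sidesteps this by rescaling to $[0,1]$ and exploiting time-reversal symmetry: since one of $\tau$ and $\bar\tau$ is always $\le 1/2$, the remaining bridge length after the first hitting time is at least $1/2$, while the drop window is $\varepsilon$ --- a genuine separation of scales that makes the bridge's terminal drift negligible inside the window and lets Lemma~\ref{stochasticdominanceBM} and the Beghin--Orsingher formula apply.

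Second, the \emph{late-hitting} case. The containment $\{\sigma>T-L\}\cap\{M\ge 2C\sqrt\varepsilon\}\subset\{\sup_{r\le L}\xi_r\ge h\}$ (with $h\approx 2C/\sqrt\varepsilon$, not $1/\sqrt\varepsilon$) throws away the information that $\zeta$ stays below the line on $[0,T-L]$, and for large $y$ the bridge from $y$ to $z$ trivially satisfies $\sup_{r\le L}\xi_r\ge h$ so your estimate reduces to $1$. Your use of the asymptotic $p_T(y,z)\sim\sqrt{2/\pi}\,yz/T^{3/2}$ is explicitly restricted to $yz/T=O(\sqrt\varepsilon)$, but Lemma~\ref{largeMubPoisson} requires uniformity over all $y\in(0,\infty)$. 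The paper handles the large-$y$ range by noting the event it controls is a \emph{downward} displacement, which is monotone in the bridge's endpoints; Lemma~\ref{stochasticdominanceBM} then reduces $y>\varepsilon^{-1/2}$ to $y=\varepsilon^{-1/2}$. Your hitting-probability formulation is not monotone in the helpful direction and so does not admit the same reduction. To salvage the argument you would have to keep the ``stays below until $T-L$'' constraint and estimate the transition-density ratio uniformly, or else switch to the drop-type event as the paper does.
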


\begin{Lemma}\label{largeMub}
There exists a positive constant $C_6$ such that for all sufficiently small $\varepsilon$ and $s\in [0,\varepsilon^{-3/2}]$,
\begin{equation}
P^{C/\sqrt{\varepsilon}+\varepsilon s/\rho}_{-\rho}\bigg(\exists u\in \mathcal{N}_s^{-\rho}: Y_u(s)>\frac{C}{4\sqrt{\varepsilon}}\bigg)>C_6.
\label{Lem2}
\end{equation}
\end{Lemma}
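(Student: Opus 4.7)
My plan is to reduce the lemma to a lower bound on a survival probability for BBM with absorption and then establish that bound via a truncated Paley--Zygmund (second-moment) argument. For the reduction, I observe that if any $u\in\mathcal N_s^{-\rho}$ keeps its entire ancestral trajectory strictly above $a:=C/(4\sqrt{\varepsilon})$ throughout $[0,s]$, then in particular $Y_u(s)>a$. Raising the absorbing barrier from $0$ up to $a$, equivalently shifting coordinates by $-a$, identifies this event with the survival to time $s$ of a BBM with drift $-\rho$, branching rate $1$ and absorption at $0$ started from $x_0-a=3C/(4\sqrt{\varepsilon})+\varepsilon s/\rho$. A monotone coupling of two such BBMs driven by the same Brownian increments and branching times (particles absorbed once they reach $0$) shows that the survival probability is increasing in the starting position, so it suffices to prove
\[
P^{\tilde x}_{-\rho}(N_s^{-\rho}>0)\ge C_6 \qquad \text{for all }s\in[0,\varepsilon^{-3/2}],
\]
with $\tilde x:=3C/(4\sqrt{\varepsilon})$.

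To bound this survival probability from below, I would apply Paley--Zygmund to a truncated count
\[
N^*:=\#\{u\in\mathcal N_s^{-\rho}:\,Y_u(s)\in J\text{ and }Y_u(r)\in\mathcal T(r)\text{ for all }r\in[0,s]\}
\]
for a carefully chosen window $J\subset(0,\infty)$ and a tube $\mathcal T(r)$ around a reference trajectory from $\tilde x$ toward $J$. The many-to-one lemma together with the Girsanov/reflection identity for drift-$(-\rho)$ BM absorbed at $0$ expresses $E^{\tilde x}_{-\rho}[N^*]$ as $e^s$ times the probability that such a single BM stays in $\mathcal T$ and ends in $J$; the preliminary Brownian-bridge estimates of Section~2 (notably the reflected-bridge tail in Lemma~\ref{RBMa=0} and the stochastic dominances of Lemmas~\ref{stochasticdominanceBM}--\ref{stochasticdominance}) then let one choose $J$ and $\mathcal T$ so that $E^{\tilde x}_{-\rho}[N^*]\ge c_1>0$ uniformly in $s\le\varepsilon^{-3/2}$. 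The pair term is given by the many-to-two formula,
\[
E^{\tilde x}_{-\rho}[N^*(N^*-1)]=2\int_0^s\!\!\int_0^\infty e^{2s-r}\,p^{\mathrm{abs},-\rho}_r(\tilde x,z)\bigl(e^{-(s-r)}E^z_{-\rho}[N^*_{s-r}]\bigr)^{\!2}\,dz\,dr,
\]
where $p^{\mathrm{abs},-\rho}_r$ is the transition density of a single drift-$(-\rho)$ BM absorbed at $0$; the goal is to bound this by $K(E^{\tilde x}_{-\rho}[N^*])^2$ for a constant $K$ independent of $\varepsilon,s$. Paley--Zygmund then yields $P^{\tilde x}_{-\rho}(N^*\ge 1)\ge 1/K=:C_6$, and $N^*\ge 1\Rightarrow N_s^{-\rho}>0$.

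The main obstacle is the second-moment bound: the branching weight $e^{2s-r}$ grows with $s$, while the squared first-moment-from-$z$ factor tends to shrink as $r\to s$, so one must show that the integral is dominated by a region in $z$ where the pair-correlation contribution stays of the same order as the product of two independent first moments. This is exactly where the hypothesis $C>2\pi$ enters: $\tilde x=3C/(4\sqrt{\varepsilon})$ exceeds the critical survival scale $\pi/\sqrt{2\varepsilon}$ by enough margin that the tube $\mathcal T$ and window $J$ can be arranged so that the pair integral is bounded by a constant multiple of the first-moment square, uniformly over $s\in[0,\varepsilon^{-3/2}]$.
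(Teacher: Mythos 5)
Your reduction is correct and amounts to the same step the paper takes: raising the absorbing barrier to $C/(4\sqrt{\varepsilon})$ and invoking monotonicity in the starting position reduces the lemma to proving $P_{-\rho}^{3C/(4\sqrt{\varepsilon})}(N_s^{-\rho}>0)\geq C_6$ uniformly over $s\in[0,\varepsilon^{-3/2}]$, which is exactly the content of the paper's reformulation in terms of a standard BBM whose particles must stay above the line $\rho r-\varepsilon s/\rho-3C/(4\sqrt{\varepsilon})$.

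Past that point you diverge from the paper, and that is where the gap lies. The paper does not run a second-moment argument: it invokes Theorem~1 of Roberts~\cite{roberts2015fine} as a black box, which supplies the curve $g(r)=\sqrt{2}r-A_c r^{1/3}+A_c r^{1/3}/\log^2(r+e)-1$ with $A_c=3^{4/3}\pi^{2/3}2^{-7/6}$, above which a standard BBM keeps some particle for all time with probability bounded below by a universal constant; the only remaining work is an elementary verification that $g(r)$ dominates the relevant line for $r\leq s\leq\varepsilon^{-3/2}$, which needs $3C/4>A_c$ and holds since $C>2\pi$. Your Paley--Zygmund plan is, in effect, a plan to reprove a version of Roberts' theorem from scratch, and you leave its decisive step --- the uniform bound $E^{\tilde x}_{-\rho}[N^*(N^*-1)]\leq K\,(E^{\tilde x}_{-\rho}[N^*])^2$ over $s\in[0,\varepsilon^{-3/2}]$ --- entirely unexecuted. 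That bound is genuinely hard: the branching weight $e^{2s-r}$ is exponentially large for $s\sim\varepsilon^{-3/2}$, and the Section~2 bridge estimates of this paper (built for the $M$ and $M'$ decompositions in the other lemmas) are not the tools that tame the truncated pair integral; one would need a tube construction of the kind Roberts carries out over many pages. Moreover, your heuristic for the role of $C>2\pi$ points at the wrong threshold. The scale $\pi/\sqrt{2\varepsilon}$ is the survival length scale for the slightly \emph{supercritical} regime (Berestycki--Berestycki--Schweinsberg), whereas the constraint governing this lemma is $3C/4>A_c\approx 4.14$, coming from the $t^{1/3}$ consistent-maximal-displacement correction of Kesten and Roberts; $C>2\pi$ does give $3C/4=3\pi/2\approx 4.71>A_c$, but not via the comparison you describe. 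So the overall strategy is plausible in kind, but the proposal has a real gap at the crux of the lemma.
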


\noindent With the help of Lemmas \ref{largeMubPoisson} and \ref{largeMub}, we will be able to state the result regarding the large $M$ case.
\begin{Lemma}\label{largeMubfinal}
There exists a positive constant $C_7$ such that for all sufficiently small $\varepsilon$, for all $y\in (0,\infty)$ and $z\in (0,\varepsilon^{-1/2}]$,
\begin{equation*}
Q^{y}\bigg[\frac{1}{\sum_{u}Y_u(\varepsilon^{-3/2})e^{\rho Y_u(\varepsilon^{-3/2})}}1_{\{M \geq 2C\sqrt{\varepsilon}\}}\bigg| \xi_{\varepsilon^{-3/2}}=z\bigg]\leq \bigg(\frac{2}{yz^2}+\frac{7}{z}+1\bigg)e^{-C_7/\sqrt{\varepsilon}}.
\end{equation*}

\end{Lemma}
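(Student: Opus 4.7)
My plan is to combine Lemmas \ref{largeMubPoisson} and \ref{largeMub} by splitting on whether the process contains a sufficiently high particle at time $\varepsilon^{-3/2}$. Let
\[
A:=\Big\{\exists u\in\mathcal{N}_{\varepsilon^{-3/2}}^{-\rho}:\; Y_u(\varepsilon^{-3/2})>\tfrac{C}{4\sqrt{\varepsilon}}\Big\}.
\]
On $A$, keeping just one term in the sum yields, for $\varepsilon$ small,
\[
\frac{1}{\sum_{u}Y_u(\varepsilon^{-3/2})e^{\rho Y_u(\varepsilon^{-3/2})}}\le\frac{4\sqrt{\varepsilon}}{C}\,e^{-C\rho/(4\sqrt{\varepsilon})}\le e^{-C\rho/(5\sqrt{\varepsilon})},
\]
whereas on $A^c$ the spinal particle, which ends at $z$, supplies the trivial bound $1/(ze^{\rho z})\le 1/z$. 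It therefore suffices to estimate $Q^{y}(A^c\cap\{M\ge 2C\sqrt{\varepsilon}\}\mid\xi_{\varepsilon^{-3/2}}=z)$.

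To bound this probability I introduce $N^{*}=\sum_{i=1}^{N_\varepsilon}1_{\{\varepsilon\zeta_{t_i}-\varepsilon^{2}t_i/\rho\ge C\sqrt{\varepsilon}\}}$, the number of ``good'' spine branching times. Lemma \ref{largeMubPoisson} handles the sub-case $N^{*}<1/\sqrt{\varepsilon}$, contributing at most $(6+2/(yz))e^{-C_5/\sqrt{\varepsilon}}$. On $\{N^{*}\ge 1/\sqrt{\varepsilon}\}$ I use the spine decomposition: conditionally on the $\sigma$-algebra $\mathcal{G}$ generated by the spine trajectory and the spine branching times, the subtrees emanating from the spine are independent BBMs with absorption. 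The subtree born at spine branching time $t_i$ starts from $\zeta_{t_i}$ and runs for a time $t_i\le\varepsilon^{-3/2}$. When $t_i$ is good, $\zeta_{t_i}\ge C/\sqrt{\varepsilon}+\varepsilon t_i/\rho$, so by a monotone coupling of BBM with absorption in the starting position together with Lemma \ref{largeMub}, this subtree contains a particle above $C/(4\sqrt{\varepsilon})$ at time $\varepsilon^{-3/2}$ with probability at least $C_{6}$. Conditional independence of the $N^{*}$ such subtrees then yields, on $\{N^{*}\ge 1/\sqrt{\varepsilon}\}$,
\[
Q^{y}(A^c\mid\mathcal{G})\le(1-C_6)^{N^{*}}\le e^{-C_6/\sqrt{\varepsilon}}.
\]

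Combining the two sub-cases and reinserting the bounds for $1/\sum_u Y_ue^{\rho Y_u}$ on $A$ and $A^c$ gives
\[
Q^{y}\bigg[\frac{1}{\sum_u Y_u(\varepsilon^{-3/2})e^{\rho Y_u(\varepsilon^{-3/2})}}1_{\{M\ge 2C\sqrt{\varepsilon}\}}\,\bigg|\,\xi_{\varepsilon^{-3/2}}=z\bigg]\le e^{-C\rho/(5\sqrt{\varepsilon})}+\frac{1}{z}\Big[\Big(6+\frac{2}{yz}\Big)e^{-C_5/\sqrt{\varepsilon}}+e^{-C_6/\sqrt{\varepsilon}}\Big],
\]
and taking $C_{7}=\min(C\rho/5,C_5,C_6)$ collapses the right-hand side to $(2/(yz^{2})+7/z+1)e^{-C_7/\sqrt{\varepsilon}}$, as required.

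The main obstacle I anticipate is the monotonicity step. Lemma \ref{largeMub} is stated for a BBM starting \emph{exactly} at $C/\sqrt{\varepsilon}+\varepsilon s/\rho$, whereas a spine sibling actually starts at $\zeta_{t_i}$, which may be strictly larger. I need a coupling of two BBMs with absorption started at $x_1\ge x_2>0$ so that the ``descendant-above-threshold'' event for the lower process implies one for the upper; this follows from driving both trees with a common noise and killing particles at the origin, but must be set up carefully so that it interacts cleanly with the conditional independence of subtrees supplied by the spine decomposition and with the conditioning $\xi_{\varepsilon^{-3/2}}=z$ on the Bessel-3 spine.
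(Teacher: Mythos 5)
Your proof takes essentially the same route as the paper: split on whether some particle sits above $C/(4\sqrt{\varepsilon})$ at time $\varepsilon^{-3/2}$, bound the reciprocal sum by $e^{-C\rho/(5\sqrt{\varepsilon})}$ on that event and by $1/(ze^{\rho z})$ on its complement, then control the complement intersected with $\{M\ge 2C\sqrt{\varepsilon}\}$ by splitting on $N^*\lessgtr 1/\sqrt{\varepsilon}$, invoking Lemma \ref{largeMubPoisson} for few good branching times and Lemma \ref{largeMub} together with the spine's conditional independence for many, giving $(1-C_6)^{1/\sqrt{\varepsilon}}$. The monotone-coupling point you flag (the subtree actually starts at $\zeta_{t_i}\ge C/\sqrt{\varepsilon}+\varepsilon t_i/\rho$ rather than exactly at the threshold) is indeed used by the paper, but only as a one-line monotonicity observation $P^{\zeta_{t_i}}_{-\rho}(\cdot)\ge P^{C/\sqrt{\varepsilon}+\varepsilon t_i/\rho}_{-\rho}(\cdot)$, so your concern is legitimate but the resolution is exactly the standard common-noise coupling you describe.
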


\noindent 
As for the small $M$ case, Lemma \ref{smallMub} provides an upper bound. The key step is to bound the probability that $M$ is less than $2C\sqrt{\varepsilon}$.
\begin{Lemma}\label{smallMub}
There exists a positive constant $C_8$ such that for sufficiently small $\varepsilon$, for all $y\in (0,\infty)$ and $z\in (0,\varepsilon^{-1/2}]$,
\begin{equation}\label{smallMubeq}
Q^{y}\bigg[\frac{1}{\sum_{u}Y_u(\varepsilon^{-3/2})e^{\rho Y_u(\varepsilon^{-3/2})}}1_{\{M < 2C\sqrt{\varepsilon}\}}\bigg| \xi_{\varepsilon^{-3/2}}=z\bigg]\leq \frac{1}{z}e^{-C_8/\sqrt{\varepsilon}}.
\end{equation}
\end{Lemma}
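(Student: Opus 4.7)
The plan is to peel off the spinal particle's own contribution to the denominator, then estimate $Q^{y,\varepsilon^{-3/2},z}(M<2C\sqrt{\varepsilon})$ by reducing it, via Brownian rescaling and two stochastic comparisons, to an event to which Lemma \ref{RBMa=0} directly applies.

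Conditional on $\xi_{\varepsilon^{-3/2}}=z$ the spine sits at height $z$ at time $\varepsilon^{-3/2}$, so $\sum_u Y_u(\varepsilon^{-3/2})e^{\rho Y_u(\varepsilon^{-3/2})}\geq ze^{\rho z}\geq z$; pulling $1/z$ out of the expectation reduces the lemma to showing $Q^{y,\varepsilon^{-3/2},z}(M<2C\sqrt{\varepsilon})\leq e^{-C_8/\sqrt{\varepsilon}}$. Under $Q^{y,\varepsilon^{-3/2},z}$, the reversed spine $\{\zeta_s\}_{0\leq s\leq \varepsilon^{-3/2}}$ is a Bessel bridge from $z$ to $y$ of length $\varepsilon^{-3/2}$, and Brownian scaling $r=s\varepsilon^{3/2}$ turns it into the unit-length Bessel bridge $\tilde X_r:=\varepsilon^{3/4}\zeta_{r\varepsilon^{-3/2}}$ from $z\varepsilon^{3/4}$ to $y\varepsilon^{3/4}$. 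After rescaling, $\{M<2C\sqrt{\varepsilon}\}=\{\sup_{0\leq r\leq 1}(\tilde X_r-\varepsilon^{1/4}r/\rho)<2C\varepsilon^{1/4}\}$.

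Next I would remove the endpoints. Since $z\varepsilon^{3/4},y\varepsilon^{3/4}\geq 0$, Lemma \ref{stochasticdominance} provides a coupling in which $\tilde X_r\geq X^{0,1,0}_r$ for every $r$, where $X^{0,1,0}$ is a Brownian excursion. Realizing the Brownian excursion as the Euclidean norm of three independent standard Brownian bridges $W^1,W^2,W^3$ supplies the further pointwise bound $X^{0,1,0}_r\geq|W^1_r|$. Chaining these comparisons,
\[
Q^{y,\varepsilon^{-3/2},z}(M<2C\sqrt{\varepsilon})\leq P\!\left(\sup_{0\leq r\leq 1}\bigl(|W^1_r|-\varepsilon^{1/4}r/\rho\bigr)<2C\varepsilon^{1/4}\right).
\]
Applying Lemma \ref{RBMa=0} with $a=\varepsilon^{1/4}/\rho$ and $b=2C\varepsilon^{1/4}$, so that $b(a+b)=2C(1/\rho+2C)\sqrt{\varepsilon}\downarrow 0$, yields an asymptotic of the form $\sqrt{2\pi/(2C(1/\rho+2C)\sqrt{\varepsilon})}\exp(-\pi^2/(16C(1/\rho+2C)\sqrt{\varepsilon}))$. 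Since $\rho\to\sqrt{2}$, the polynomial prefactor is $O(\varepsilon^{-1/4})$ and is absorbed by taking any $C_8<\pi^2/(16C(1/\sqrt{2}+2C))$ for $\varepsilon$ small enough, completing the argument.

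The main obstacle is getting a bound that is uniform in $y\in(0,\infty)$: the Bessel bridge $\tilde X$ depends explicitly on the possibly large endpoint $y\varepsilon^{3/4}$, and I do not know a clean closed-form small-deviation estimate for a general Bessel bridge below a linear barrier. The two successive dominations (first by the $0$-to-$0$ excursion via Lemma \ref{stochasticdominance}, then by $|W^1|$ via the 3D-norm representation of the excursion) are precisely what eliminates $y$ from the final estimate and brings the problem into the form covered by Lemma \ref{RBMa=0}.
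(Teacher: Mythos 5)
Your argument is correct and follows essentially the same route as the paper: pull out the spinal term $ze^{\rho z}\geq z$, rescale the reversed spine to a unit-length Bessel bridge, use Lemma \ref{stochasticdominance} to compare with the excursion $X^{0,1,0}$, project to a single reflected Brownian bridge via the 3D-norm representation, and finish with Lemma \ref{RBMa=0}. The only (immaterial) differences from the paper's proof are that you keep the linear drift $\varepsilon^{1/4}r/\rho$ in the barrier and apply Lemma \ref{RBMa=0} with $a>0$ and a single factor, whereas the paper first enlarges to $\{\sup\zeta_s<3C/\sqrt{\varepsilon}\}$ using $1/\rho<C$ and then cubes the one-dimensional bound — and the paper also disposes of the (unnecessary) case $y\geq(2C+1/\rho)\varepsilon^{-1/2}$ separately before invoking stochastic dominance; both variations yield a valid, uniform-in-$y$ bound with a different but equally adequate choice of $C_8$.
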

\noindent
In the end, the upper bound (\ref{limsup}) is proved in Section 3.2 by combining Lemmas \ref{endpointofspine}, \ref{largeMubfinal} and \ref{smallMub}.

In Section 3.2, we will gather all the lemmas to obtain the upper bound  (\ref{limsup}) and in Section 3.3, we will provide proofs for the lemmas above. 

\subsection{Proof of upper bound}

To begin with, conditioning on the end point of the spinal trajectory, we have
\begin{align}\label{Lem1Bessel}
&\sqrt{2\pi t^3}Q^{x}\bigg[\frac{1}{\sum_{u}Y_u(t)e^{\rho Y_u(t)}};\xi_t\leq \varepsilon^{-1/2}\bigg] \nonumber\\
&\hspace{0.2in}= \sqrt{2\pi t^3}\int_{0}^{\varepsilon^{-1/2}}Q^{x}\bigg[\frac{1}{\sum_{u}Y_u(t)e^{\rho Y_u(t)}}\bigg|\xi_t=z\bigg]\frac{z}{x}\frac{1}{\sqrt{2\pi t}}e^{-(x-z)^2/2t}(1-e^{-2xz/t})dz\nonumber\\
&\hspace{0.2in}\leq \sqrt{2\pi t^3}\int_{0}^{\varepsilon^{-1/2}}Q^{x}\bigg[\frac{1}{\sum_{u}Y_u(t)e^{\rho Y_u(t)}}\bigg|\xi_t=z\bigg]\frac{z}{x}\frac{1}{\sqrt{2\pi t}}e^{-(x-z)^2/2t}\frac{2xz}{t}dz\nonumber\\
&\hspace{0.2in}=\int_{0}^{\varepsilon^{-1/2}}2z^2e^{-(x-z)^2/2t}Q^{x}\bigg[\frac{1}{\sum_{u}Y_u(t)e^{\rho Y_u(t)}}\bigg|\xi_t=z\bigg]dz.
\end{align}
Next, knowing that a reversed Bessel bridge is still a Bessel bridge, if $\{\xi_s\}_{0\leq s\leq t}$ is a Bessel bridge from $x$ to $z$ within time $t$, then $\{\zeta_s\}_{0\leq s\leq t}$ is also a Bessel bridge from $z$ to $x$ within time $t$. Since we are going to obtain an upper bound, it is enough to only look at the set of living particles at time $t$ that branch off the spine in the last $\varepsilon^{-3/2}$ time. For clarification, under $Q^x$, the set $\{u\in \mathcal{N}_{t}:O_u\geq t-\varepsilon^{-3/2}\}$ includes the spinal particle. We have
\begin{align}\label{Lem1Separate}
Q^{x}\bigg[\frac{1}{\sum_{u}Y_u(t)e^{\rho Y_u(t)}}\bigg|\xi_t=z\bigg]
& \leq Q^{x,t,z}\bigg[\frac{1}{\sum_{u}Y_u(t)e^{\rho Y_u(t)}1_{\{O_u \geq t-\varepsilon^{-3/2}\}}}\bigg] \nonumber\\
&=\int_{0}^{\infty}Q^{x,t,z}\bigg[\frac{1}{\sum_{u}Y_u(t)e^{\rho Y_u(t)}1_{\{O_u \geq t-\varepsilon^{-3/2}\}}}\bigg| \zeta_{\varepsilon^{-3/2}}=y\bigg]p_{\varepsilon^{-3/2}}^{z,t,x}(y)dy.
\end{align}
According to the Markov property of branching Brownian motion and the Bessel process,
\begin{align}
&Q^{x,t,z}\bigg[\frac{1}{\sum_{u}Y_u(t)e^{\rho Y_u(t)}1_{\{O_u \geq t-\varepsilon^{-3/2}\}}}\bigg| \zeta_{\varepsilon^{-3/2}}=y\bigg]=Q^{y,\varepsilon^{-3/2},z}\bigg[\frac{1}{\sum_{u}Y_u(\varepsilon^{-3/2})e^{\rho Y_u(\varepsilon^{-3/2})}}\bigg].
\label{Lem1I1}
\end{align}
Note that $1-e^{-x}\leq x$ for all $x\geq 0$ and $1-e^{-x}\geq x/2$ for $0\leq x\leq 1$. For any fixed $\varepsilon$, if $t$ is large enough such that $t/(t-\varepsilon^{-3/2})\leq 2$, then for all $z\in (0,\varepsilon^{-1/2}]$ and $y\in (0,\infty)$, 
\begin{align}\label{density}
p_{\varepsilon^{-3/2}}^{z,t,x}(y)
&=\frac{p_{\varepsilon^{-3/2}}(z,y)p_{t-\varepsilon^{-3/2}}(y,x)}{p_{t}(z,x)}\nonumber\\
&=\frac{1}{\sqrt{2\pi\varepsilon^{-3/2}}}\cdot\frac{y}{z}e^{-(y-z)^2/(2\varepsilon^{-3/2})}(1-e^{-2yz\varepsilon^{3/2}})\nonumber\\
&\quad\times\frac{\frac{1}{\sqrt{2\pi(t-\varepsilon^{-3/2})}}\cdot \frac{x}{y}e^{-(y-x)^2/2(t-\varepsilon^{-3/2})}(1-e^{-2yx/(t-\varepsilon^{-3/2})})}{\frac{1}{\sqrt{2\pi t}}\cdot \frac{x}{z}e^{-(z-x)^2/2t}(1-e^{-2xz/t})}\nonumber\\
&\leq \frac{1}{\sqrt{2\pi\varepsilon^{-3/2}}}\frac{y}{z}e^{-(y-z)^2/(2\varepsilon^{-3/2})}2yz\varepsilon^{3/2}\sqrt{\frac{t}{t-\varepsilon^{-3/2}}}\frac{z}{y}e^{(z-x)^2/2t}\frac{2xy}{t-\varepsilon^{-3/2}}\frac{t}{xz}\nonumber\\
&=\sqrt{\frac{8}{\pi}}\varepsilon^{9/4}\bigg(\frac{t}{t-\varepsilon^{-3/2}}\bigg)^{3/2}y^2e^{-(y-z)^2/(2\varepsilon^{-3/2})}e^{(z-x)^2/2t}\nonumber\\
&\leq \frac{8}{\sqrt{\pi}}\varepsilon^{9/4}y^2e^{-(y-z)^2/(2\varepsilon^{-3/2})}e^{(z-x)^2/2t}.
\end{align}
By (\ref{Lem1Bessel}), (\ref{Lem1Separate}), (\ref{Lem1I1}) and (\ref{density}), for every fixed $\varepsilon$, if $t$ is large enough, we have
\begin{align*}
&\sqrt{2\pi t^3}Q^{x}\bigg[\frac{1}{\sum_{u}Y_u(t)e^{\rho Y_u(t)}};\xi_t\leq \varepsilon^{-1/2}\bigg]\\
&\hspace{0.2in}\leq \int_{0}^{\varepsilon^{-1/2}}\int_{0}^{\infty}\frac{16}{\sqrt{\pi}}\varepsilon^{9/4}Q^{y,\varepsilon^{-3/2},z}\bigg[\frac{1}{\sum_{u}Y_u(\varepsilon^{-3/2})e^{\rho Y_u(\varepsilon^{-3/2})}}\bigg]z^2y^2e^{-(y-z)^2/(2\varepsilon^{-3/2})}dydz.
\end{align*}
Furthermore, by Lemmas \ref{largeMubfinal} and \ref{smallMub}, it follows that for sufficiently small $\varepsilon$, if $t$ is large enough,
\begin{align}\label{ub}
&\sqrt{2\pi t^3}Q^{x}\bigg[\frac{1}{\sum_{u}Y_u(t)e^{\rho Y_u(t)}};\xi_t\leq \varepsilon^{-1/2}\bigg]\nonumber\\
&\hspace{0.2in}\leq \int_{0}^{\varepsilon^{-1/2}}\int_{0}^{\infty}\frac{16}{\sqrt{\pi}}\varepsilon^{9/4}\bigg[\bigg(\frac{2}{yz^2}+\frac{7}{z}+1\bigg)e^{-C_7/\sqrt{\varepsilon}}+\frac{1}{z}e^{-C_8/\sqrt{\varepsilon}}\bigg]z^2y^2e^{-(y-z)^2/(2\varepsilon^{-3/2})}dydz\nonumber\\
&\hspace{0.2in}\leq\frac{16}{\sqrt{\pi}}\varepsilon^{9/4}\bigg\{ \int_{0}^{\varepsilon^{-1/2}}\int_{0}^{\infty}2e^{-C_7/\sqrt{\varepsilon}}ye^{-(y-z)^2/(2\varepsilon^{-3/2})}dydz\nonumber\\
&\hspace{0.2in}\quad+ \int_{0}^{\varepsilon^{-1/2}}\int_{0}^{\infty}\Big(7e^{-C_7/\sqrt{\varepsilon}}+e^{-C_8/\sqrt{\varepsilon}}\Big)zy^2e^{-(y-z)^2/(2\varepsilon^{-3/2})}dydz\nonumber\\
&\hspace{0.2in}\quad+\int_{0}^{\varepsilon^{-1/2}}\int_{0}^{\infty}e^{-C_7/\sqrt{\varepsilon}}z^2y^2e^{-(y-z)^2/(2\varepsilon^{-3/2})}dydz\bigg\}\nonumber\\
&\hspace{0.2in}=:\frac{16}{\sqrt{\pi}}\varepsilon^{9/4}(I_1+I_2+I_3).
\end{align}
For the first term, by substitution, we obtain that
\begin{align}\label{I_1}
I_1&=2e^{-C_7/\sqrt{\varepsilon}}\int_{0}^{\varepsilon^{-1/2}}\int_{-z}^{\infty}(u+z)e^{-u^2/(2\varepsilon^{-3/2})}dudz\nonumber\\
&\leq 2e^{-C_7/\sqrt{\varepsilon}}\bigg(\int_{0}^{\varepsilon^{-1/2}}\int_{0}^{\infty}ue^{-u^2/(2\varepsilon^{-3/2})}dudz+\int_{0}^{\varepsilon^{-1/2}}z\int_{-\infty}^{\infty}e^{-u^2/(2\varepsilon^{-3/2})}dudz\bigg)\nonumber\\
&= \bigg(2\varepsilon^{-2}+\sqrt{2\pi}\varepsilon^{-7/4}\bigg)e^{-C_7/\sqrt{\varepsilon}}.
\end{align}
As for $I_2$ and $I_3$, because
\begin{align*}
\int_{0}^{\infty}y^2e^{-(y-z)^2/(2\varepsilon^{-3/2})}dy&\leq\sqrt{2\pi\varepsilon^{-3/2}}\int_{-\infty}^{\infty}y^2\frac{1}{\sqrt{2\pi\varepsilon^{-3/2}}}e^{-(y-z)^2/(2\varepsilon^{-3/2})}dy\\
&=\sqrt{2\pi\varepsilon^{-3/2}}\Big(\varepsilon^{-3/2}+z^2\Big),
\end{align*}
we have
\begin{align}\label{I_2}
I_2
&\leq \sqrt{2\pi}\varepsilon^{-3/4}\Big(7e^{-C_7/\sqrt{\varepsilon}}+e^{-C_8/\sqrt{\varepsilon}}\Big)\int_{0}^{\varepsilon^{-1/2}}z(\varepsilon^{-3/2}+z^2)dz\nonumber\\
&\leq \sqrt{2\pi}\varepsilon^{-13/4}\Big(7e^{-C_7/\sqrt{\varepsilon}}+e^{-C_8/\sqrt{\varepsilon}}\Big),
\end{align}
and
\begin{align}\label{I_3}
I_3&\leq \sqrt{2\pi}\varepsilon^{-3/4}e^{-C_7/\sqrt{\varepsilon}}\int_{0}^{\varepsilon^{-1/2}} z^2(\varepsilon^{-3/2}+z^2)dz\nonumber\\
&\leq \sqrt{2\pi}\varepsilon^{-15/4}e^{-C_7/\sqrt{\varepsilon}}.
\end{align}
Setting $0<C_4<\min\{C_7,C_8\}$, equation (\ref{subgoal}) follows from (\ref{ub}), (\ref{I_1}), (\ref{I_2}) and (\ref{I_3}). Finally, according to Lemma \ref{endpointofspine} and equation (\ref{subgoal}), the upper bound (\ref{limsup}) is proved by letting $0< C_1<\min\{C_3,C_4\}$. 

\subsection{Proofs of Lemmas}
\noindent\textit{Proof of Lemma \ref{endpointofspine}}.
For all $\varepsilon$ and all $t$, we have a trivial bound for the expectation
\begin{align*}
&\sqrt{2\pi t^3}Q^x\bigg[\frac{1}{\sum_{u}Y_u(t)e^{\rho Y_u(t)}}; \xi_{t}\geq \varepsilon^{-1/2}\bigg]\\
& \hspace{0.2in}\leq \sqrt{2\pi t^3}Q^x\bigg[\frac{1}{\xi_{t}e^{\rho \xi_{t}}};\xi_t\geq \varepsilon^{-1/2}\bigg]\\
&\hspace{0.2in}=\sqrt{2\pi t^3}\int_{\varepsilon^{-1/2}}^{\infty}\frac{1}{ze^{\rho z}}p_t(x,z)dz\\
&\hspace{0.2in} =\sqrt{2\pi t^3} \int_{\varepsilon^{-1/2}}^{\infty}\frac{1}{ze^{\rho z}}\frac{z}{x}\frac{1}{\sqrt{2\pi t}}e^{-(x-z)^2/2t}(1-e^{-2xz/t})dz\\
&\hspace{0.2in}\leq \sqrt{2\pi t^3}\int_{\varepsilon^{-1/2}}^{\infty}\frac{1}{ze^{\rho z}}\frac{z}{x}\frac{1}{\sqrt{2\pi t}}\frac{2xz}{t}dz\\
&\hspace{0.2in}=2\int_{\varepsilon^{-1/2}}^{\infty}ze^{-\rho z}dz\\
&\hspace{0.2in}=\frac{2}{\rho}e^{-\rho /\sqrt{\varepsilon}}\bigg(\frac{1}{\sqrt{\varepsilon}}+\frac{1}{\rho}\bigg).
\end{align*}
Letting $0<C_3<\sqrt{2}\leq\rho$, Lemma \ref{endpointofspine} is established.
\qedwhite
\\

For the rest of this section we will denote by $X^{a,t,b}$ a Bessel bridge from $a$ to $b$ in time $t$ and $B^{a,t,b}$ a Brownian bridge from $a$ to $b$ in time $t$. 
\\
\noindent\textit{Proof of Lemma \ref{largeMubPoisson}}.
Observe that since $C+1/\rho\leq 3C/2$,  for $0\leq t_i\leq \varepsilon^{-3/2}$, 
\begin{align*}
&Q^{y}\Bigg(\bigg\{\sum_{i=1}^{N_\varepsilon}1_{\{\varepsilon \zeta_{t_i}-\frac{1}{\rho}\varepsilon^{2}t_i\geq C\sqrt{\varepsilon}\}}\leq \frac{1}{\sqrt{\varepsilon}}\bigg\}\cap \big\{M\geq 2C\sqrt{\varepsilon}\big\}\bigg| \xi_{\varepsilon^{-3/2}}=z\Bigg)\\
&\hspace{0.2in}\leq Q^{y,\varepsilon^{-3/2},z}\Bigg(\bigg\{\sum_{i=1}^{N_\varepsilon}1_{\{\zeta_{t_i}\geq 3C/(2\sqrt{\varepsilon})\}}\leq \frac{1}{\sqrt{\varepsilon}}\bigg\}\cap \big\{M\geq 2C\sqrt{\varepsilon}\big\}\Bigg).
\end{align*}
So it is sufficient to show that there exists a constant $C_{5}$ such that for every $y\in(0,\infty)$ and $z\in (0,\varepsilon^{-1/2}]$,
\begin{equation}\label{largeMubPoissonfirstsimplification}
Q^{y,\varepsilon^{-3/2},z}\Bigg(\bigg\{\sum_{i=1}^{N_\varepsilon}1_{\{\zeta_{t_i}\geq 3C/(2\sqrt{\varepsilon})\}}\leq \frac{1}{\sqrt{\varepsilon}}\bigg\}\cap \big\{M\geq 2C\sqrt{\varepsilon}\big\}\Bigg) \leq \bigg(4+\frac{1}{yz}\bigg)e^{-C_{5}/\sqrt{\varepsilon}}.
\end{equation}
According to the spinal decomposition and the formula for expectations of additive functionals of Poisson point processes,
\begin{equation*}
Q^{y,\varepsilon^{-3/2},z}\bigg(\sum_{i=1}^{N_\varepsilon}1_{\{\zeta_{t_i}\geq 3C/(2\sqrt{\varepsilon})\}}\bigg|\{\zeta_s\}_{0\leq s\leq \varepsilon^{-3/2}}\bigg)=2\int_{0}^{\varepsilon^{-3/2}}1_{\{\zeta_s\geq 3C/(2\sqrt{\varepsilon})\}}ds.
\end{equation*}
Below for simplicity, we denote
\[ 
\sum_{i=1}^{N_\varepsilon}1_{\{\zeta_{t_i}\geq 3C/(2\sqrt{\varepsilon})\}}=:X,
\]
\[
2\int_{0}^{\varepsilon^{-3/2}}1_{\{\zeta_s\geq 3C/(2\sqrt{\varepsilon})\}}ds=:Y.
\]
The proof of (\ref{largeMubPoissonfirstsimplification}) can be separated into two parts with the help of the above conditional expectation,
\begin{align}\label{largeMubPoissonI}
&Q^{y,\varepsilon^{-3/2},z}\bigg( \Big\{X\leq \frac{1}{\sqrt{\varepsilon}}\Big\}\cap \{M\geq 2C\sqrt{\varepsilon}\}\bigg) \nonumber\\
&\hspace{0.2in}=Q^{y,\varepsilon^{-3/2},z}\bigg(\Big\{X\leq\frac{1}{\sqrt{\varepsilon}}\Big\}\cap \Big\{Y\geq \frac{2}{\sqrt{\varepsilon}}\Big\}\cap \{M\geq 2C\sqrt{\varepsilon}\}\bigg) \nonumber\\
&\hspace{0.2in}\quad + Q^{y,\varepsilon^{-3/2},z}\bigg(\Big\{X\leq \frac{1}{\sqrt{\varepsilon}}\Big\}\cap \Big\{Y\leq \frac{2}{\sqrt{\varepsilon}}\Big\}\cap \{M\geq 2C\sqrt{\varepsilon}\}\bigg)\nonumber\\
&\hspace{0.2in}\leq Q^{y,\varepsilon^{-3/2},z}\bigg(\Big\{X\leq\frac{1}{\sqrt{\varepsilon}}\Big\}\cap \Big\{Y\geq \frac{2}{\sqrt{\varepsilon}}\Big\}\bigg) +Q^{y,\varepsilon^{-3/2},z}\bigg( \Big\{Y\leq \frac{2}{\sqrt{\varepsilon}}\Big\}\cap \{M\geq 2C\sqrt{\varepsilon}\}\bigg)\nonumber\\
&\hspace{0.2in}=: J_1+J_2.
\end{align}
First, we will show that conditioned on the trajectory of the spine, the number of particles that branch off the spine at a position and time $(\zeta_{t_i}, t_i)$ satisfying $\zeta_{t_i} \geq 3C/(2\sqrt{\varepsilon})$ isn't far from its conditional expectation, which gives an upper bound for $J_1$. Next we will find an upper bound for $J_2$ through analysis of the behavior of the spine. 
 
For the first part, we will apply the following bound for the Poisson distribution (see, e.g. \cite{chernoffcs}). For a Poisson distributed random variable $Z$ with expectation $\lambda$, for any $v>0$, we have
\begin{equation} \label{Chernoff}
P(|Z-\lambda|\geq v)\leq 2e^{-v^2/2(\lambda+v)}.
\end{equation}
\noindent 
We know that under $Q^{y,\varepsilon^{-3/2},z}$, the conditional distribution of $X$ given $\{\zeta_s\}_{0\leq s\leq \varepsilon^{-3/2}}$ is a Poisson distribution with parameter $Y$. Applying (\ref{Chernoff}) under the conditional expectation with $\lambda=Y$ and $v=Y/2$, we have 
\begin{align}\label{largeMubPoissonI1}
J_1&\leq Q^{y,\varepsilon^{-3/2},z}\Big(1_{\{Y\geq 2/\sqrt{\varepsilon}\}} Q^{y,\varepsilon^{-3/2},z}\big(\big|X-Y\big|\geq Y/2\big|\{\zeta_s\}_{0\leq s\leq \varepsilon^{-3/2}} \big)\Big) \nonumber\\
& \leq Q^{y,\varepsilon^{-3/2},z}\bigg(1_{\{Y\geq 2/\sqrt{\varepsilon}\}}2\exp{\Big\{-\frac{(Y/2)^2}{2(Y+Y/2)}\Big\}}\bigg)\nonumber\\
&\leq 2Q^{y,\varepsilon^{-3/2},z}\Big(1_{\{Y\geq 2/\sqrt{\varepsilon}\}}e^{-Y/12}\Big)\nonumber\\
&\leq 2e^{-1/(6\sqrt{\varepsilon})}.
\end{align}

\noindent As for the second part, we have
\begin{align*}\label{largeMubPoissonI2scaling}
J_2
&\leq Q^{y,\varepsilon^{-3/2},z}\Bigg(\bigg\{\int_{0}^{\varepsilon^{-3/2}}1_{\{\zeta_s\geq 3C/(2\sqrt{\varepsilon})\}}ds\leq \frac{1}{\sqrt{\varepsilon}}\bigg\}\cap \bigg\{\sup_{0\leq s\leq \varepsilon^{-3/2}}\varepsilon\zeta_s \geq 2C\sqrt{\varepsilon}\bigg\}\Bigg)\\
&=Q^{y,\varepsilon^{-3/2},z}\Bigg(\bigg\{\int_{0}^{1}1_{\{\varepsilon^{3/4}\zeta_{\varepsilon^{-3/2}r}\geq 3C\varepsilon^{1/4}/2\}}dr\leq \varepsilon\bigg\}\cap\bigg\{\sup_{0\leq r\leq 1}\varepsilon^{3/4}\zeta_{\varepsilon^{-3/2}r}\geq 2C\varepsilon^{1/4}\bigg\}\bigg).
\end{align*}
Notice that under $Q^{y,\varepsilon^{-3/2},z}$, the process $\{\zeta_s\}_{0\leq s\leq \varepsilon^{-3/2}}$ is a Bessel bridge from $z$ to $y$ in time $\varepsilon^{-3/2}$. After scaling, $\{\varepsilon^{3/4}\zeta_{\varepsilon^{-3/2}r}\}_{0\leq r\leq 1}$  is a Bessel bridge from $\varepsilon^{3/4}z$ to $\varepsilon^{3/4}y$ within time $1$. Recall that $\{X_r^{\varepsilon^{3/4}z,1,\varepsilon^{3/4}y}\}_{0\leq r\leq 1}$ is a Bessel bridge from $\varepsilon^{3/4}z$ to $\varepsilon^{3/4}y$ within time $1$. For simplicity, we will write $\{X_r\}_{0\leq r\leq 1}$ in place of $\{X_r^{\varepsilon^{3/4}z,1,\varepsilon^{3/4}y}\}_{0\leq r\leq 1}$. Therefore, we have 
\begin{equation}\label{largeMubPoissonI2scaling}
J_2\leq P\Bigg(\bigg\{\int_{0}^{1}1_{\{X_r\geq 3C\varepsilon^{1/4}/2\}}dr\leq \varepsilon\bigg\}\cap\bigg\{\sup_{0\leq r\leq 1}X_r\geq 2C\varepsilon^{1/4}\bigg\}\Bigg).
\end{equation}
Define $\{\bar{X}_r\}_{0\leq r\leq 1}=\{X_{1-r}\}_{0\leq r\leq 1}$ to be the time reversed process of $\{X_r\}_{0\leq r\leq 1}$. Then $\{\bar{X}_r\}_{0\leq r\leq 1}$ is a Bessel bridge from $\varepsilon^{3/4}y$ to $\varepsilon^{3/4}z$ in time $1$. Thus the intersection of the events in (\ref{largeMubPoissonI2scaling}) is contained in the union of two events. One of the events is that $\{X_r\}_{0\leq r\leq 1}$ first reaches $2C\varepsilon^{1/4}$ before time $1/2$ and then comes down below $3C\varepsilon^{1/4}/2$ in time less than $\varepsilon$. The other event is that $\{\bar{X}_r\}_{0\leq r\leq 1}$ first reaches $2C\varepsilon^{1/4}$ before time $1/2$ and then comes down below $3C\varepsilon^{1/4}/2$ in time less than $\varepsilon$. Define 
\[
\tau=\inf \big\{r\geq 0: X_r\geq 2C\varepsilon^{1/4}\big\}, \quad \bar{\tau}=\inf \big\{r\geq 0: \bar{X}_r\geq 2C\varepsilon^{1/4}\big\}.
\]
We see that
\begin{align}\label{largeMubPoissontwoevents}
&P\Bigg(\bigg\{\int_{0}^{1}1_{\{X_r\geq 3C\varepsilon^{1/4}/2\}}dr\leq \varepsilon\bigg\}\cap\bigg\{\sup_{0\leq r\leq 1}X_r\geq 2C\varepsilon^{1/4}\bigg\}\Bigg)\nonumber\\
& \hspace{0.2in}\leq P\bigg(\Big\{\tau\leq\frac{1}{2}\Big\}\cap \bigg\{\min_{0\leq r\leq \varepsilon}X_{\tau+r}\leq \frac{3C\varepsilon^{1/4}}{2}\bigg\}\bigg)+P\bigg(\Big\{\bar{\tau}\leq\frac{1}{2}\Big\}\cap \bigg\{\min_{0\leq r\leq \varepsilon}\bar{X}_{\bar{\tau}+r}\leq \frac{3C\varepsilon^{1/4}}{2}\bigg\}\bigg).
\end{align}
Therefore, the proof for the second part of Lemma \ref{largeMubPoisson} boils down to Lemma \ref{largeMubPoissonfirstevent}, whose statement and proof is deferred until later. 

Letting $0<C_5<\min\{1/6, C_9\}$, with equations (\ref{largeMubPoissonI}),  (\ref{largeMubPoissonI1}),  (\ref{largeMubPoissonI2scaling}),  (\ref{largeMubPoissontwoevents}) and Lemma  \ref{largeMubPoissonfirstevent}, formula (\ref{largeMubPoissonfirstsimplification}) is proved and thus the proof of Lemma  \ref{largeMubPoisson} is finished.

\qedwhite
\\

Below, we will state and prove Lemma \ref{largeMubPoissonfirstevent}.
\begin{Lemma}\label{largeMubPoissonfirstevent}
There exists a positive constant $C_{9}$ such that for $\varepsilon$ sufficiently small, for all $z\in (0,\infty)$ and $y\in (0,\infty)$,
\begin{equation}\label{largeMubPoissonfirsteventformula}
P\bigg(\Big\{\tau\leq\frac{1}{2}\Big\}\cap \bigg\{\min_{0\leq r\leq \varepsilon}X_{\tau+r}\leq \frac{3C\varepsilon^{1/4}}{2}\bigg\}\bigg)\leq \bigg(\frac{1}{yz}+2\bigg)e^{-C_{9}/\sqrt{\varepsilon}}.
\end{equation}
\end{Lemma}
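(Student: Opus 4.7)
The plan is to reduce the Bessel-bridge event to a Gaussian tail bound in two stages.

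\emph{From Bessel bridge to Brownian bridge.} The Bessel-3 bridge $X$ of parameters $(\varepsilon^{3/4}z,1,\varepsilon^{3/4}y)$ has the same law as a Brownian bridge $W$ with the same parameters conditioned on $\{\inf_{0\le r\le 1}W_r>0\}$, and this conditioning event has probability $1-\exp(-2\varepsilon^{3/2}yz)$. Writing $\tau_W$ for the first time $W$ reaches $2C\varepsilon^{1/4}$, this gives
\[
P\bigl(\{\tau\le 1/2\}\cap\{\min_{0\le r\le\varepsilon}X_{\tau+r}\le\tfrac{3C}{2}\varepsilon^{1/4}\}\bigr)\le\frac{P\bigl(\{\tau_W\le 1/2\}\cap\{\min_{0\le r\le\varepsilon}W_{\tau_W+r}\le\tfrac{3C}{2}\varepsilon^{1/4}\}\bigr)}{1-e^{-2\varepsilon^{3/2}yz}}.
\]
Because $1-e^{-w}\ge\min(w/2,\,1-e^{-1})$, the prefactor $1/(1-e^{-2\varepsilon^{3/2}yz})$ is at most $2/(\varepsilon^{3/2}yz)+2$; this is the source of the $1/(yz)+\text{const}$ in the target bound, with the spurious $\varepsilon^{-3/2}$ absorbed into the final exponential.

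\emph{Brownian bridge to Brownian motion.} Since $W_{\tau_W}=a:=2C\varepsilon^{1/4}$ by path-continuity, the strong Markov property of the Brownian bridge at $\tau_W$ makes the post-$\tau_W$ process a Brownian bridge from $a$ to $b:=\varepsilon^{3/4}y$ in time $u:=1-\tau_W\in[1/2,1]$. On the short interval $[0,\varepsilon]$ this bridge is absolutely continuous with respect to a Brownian motion started at $a$, with Radon-Nikodym derivative bounded by $\sqrt{u/(u-\varepsilon)}\,e^{(a-b)^2/(2u)}$. When $b\le a$ (i.e.\ $y\le 2C\varepsilon^{-1/2}$) we have $(a-b)^2\le a^2=4C^2\sqrt\varepsilon$, making this factor $O(1)$; when $b>a$ I instead first invoke the stochastic dominance of Lemma~\ref{stochasticdominanceBM}, using $W^{a,u,b}\ge W^{a,u,a}$ pathwise to upper-bound the min-event probability by the analogous one for a bridge with terminal point $a$, for which $(a-a)^2=0$. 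The reflection principle then controls the resulting Brownian-motion probability:
\[
P\bigl(\min_{0\le r\le\varepsilon}B_r\le\tfrac{3C}{2}\varepsilon^{1/4}\bigr)=2P\bigl(B_\varepsilon\ge\tfrac C2\varepsilon^{1/4}\bigr)\le 2e^{-C^2/(8\sqrt\varepsilon)}.
\]

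\emph{Combining and main obstacle.} Multiplying the two stages gives an upper bound of the form $\bigl(2/(\varepsilon^{3/2}yz)+2\bigr)\cdot\mathrm{const}\cdot e^{-C^2/(8\sqrt\varepsilon)}$, and choosing any $0<C_9<C^2/8$—allowed because $C>2\pi$—absorbs the polynomial-in-$\varepsilon^{-1}$ prefactors to deliver the claimed $(1/(yz)+2)e^{-C_9/\sqrt\varepsilon}$. The most delicate point is the regime $b>a$ of very large $y$, where a naive Radon-Nikodym estimate would produce an $e^{\varepsilon^{3/2}y^2/(2u)}$ factor that could erode the Gaussian gain; invoking the stochastic dominance Lemma~\ref{stochasticdominanceBM} to replace the far-off endpoint by $a$ is what removes this obstruction. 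Verifying that all polynomial corrections fit inside a single exponential constant $C_9<C^2/8$ is then a routine bookkeeping step.
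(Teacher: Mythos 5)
Your route diverges from the paper's in the core technical step and is arguably more elementary: after reducing to a Brownian bridge by conditioning on origin-avoidance and applying the strong Markov property at $\tau_W$, you bound the short-interval $\min$-event by a Radon--Nikodym comparison with Brownian motion plus the reflection principle and a Gaussian tail, whereas the paper invokes the exact distribution of the maximum of an initial segment of a Brownian bridge (Theorem 2.1 of Beghin--Orsingher) and tracks the asymptotics of that formula. Your handling of large $b$ via Lemma \ref{stochasticdominanceBM} also mirrors the spirit of the paper's treatment of $y>\varepsilon^{-1/2}$; both choices of cutoff are workable.

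There is, however, a genuine gap: you assert that $W_{\tau_W}=a:=2C\varepsilon^{1/4}$ ``by path-continuity,'' but this is false when the starting point already exceeds the threshold, i.e.\ when $\varepsilon^{3/4}z\ge 2C\varepsilon^{1/4}$ (equivalently $z\ge 2C\varepsilon^{-1/2}$). In that regime $\tau_W=0$ and $W_{\tau_W}=\varepsilon^{3/4}z$, so the post-$\tau_W$ process is a bridge from $\varepsilon^{3/4}z$ (not $a$) to $b$ in time $1$, and your Radon--Nikodym factor becomes $\sqrt{1/(1-\varepsilon)}\,e^{(\varepsilon^{3/4}z-b)^2/2}$, which is not controlled uniformly in $z$. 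The lemma does require all $z\in(0,\infty)$: in Lemma~\ref{largeMubPoisson} the second term of (\ref{largeMubPoissontwoevents}) applies the present lemma to the reversed bridge $\bar X$, whose initial parameter is the outer $y$, which is unbounded. The paper disposes of this regime at the very start, using the stochastic-dominance Lemma \ref{stochasticdominance} to replace a bridge started at $\varepsilon^{3/4}z$ by one started at $2C\varepsilon^{1/4}$ when $z>2C\varepsilon^{-1/2}$, after which $\tau$ (resp.\ $\tau_W$) is $0$ and the starting value is exactly at the threshold. You should insert an analogous preliminary reduction (one can equally use Lemma \ref{stochasticdominanceBM} on the unconditioned Brownian bridge before conditioning) so that the identity $W_{\tau_W}=a$ is legitimate in all cases; with that addition your argument closes correctly.
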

\noindent\textit{Proof of Lemma \ref{largeMubPoissonfirstevent}}.
Let's first consider the case when $z\in (2C\varepsilon^{-1/2},\infty)$. Under this scenario, $\tau=0$ and thus
\begin{equation}
P\bigg(\Big\{\tau<\frac{1}{2}\Big\}\cap \bigg\{\min_{0\leq r\leq \varepsilon}X_{\tau+r}\leq\frac{3C\varepsilon^{1/4}}{2}\bigg\}\bigg)
=P\bigg(\min_{0\leq r\leq \varepsilon}X_{r}\leq \frac{3C\varepsilon^{1/4}}{2}\bigg).
\end{equation}
According to Lemma \ref{stochasticdominance}, the process $\{X_r\}_{0\leq r\leq 1}$ stochastically dominates  $\{X_r^{2C\varepsilon^{1/4}, 1, \varepsilon^{3/4}y}\}_{0\leq r\leq 1}$, which is a Bessel bridge from $2C\varepsilon^{1/4}$ to $\varepsilon^{3/4}y$ in time $1$. Therefore
\begin{align}
P\bigg(\min_{0\leq r\leq \varepsilon}X_{r}\leq \frac{3C\varepsilon^{1/4}}{2}\bigg)
&\leq P\bigg(\min_{0\leq r\leq \varepsilon}X_r^{2C\varepsilon^{1/4}, 1, \varepsilon^{3/4}y}\leq \frac{3C\varepsilon^{1/4}}{2}\bigg)\\
&=P\bigg(\Big\{\tau\leq\frac{1}{2}\Big\}\cap \bigg\{\min_{0\leq r\leq \varepsilon}X^{2C\varepsilon^{1/4},1,\varepsilon{3/4y}}_{\tau+r}\leq \frac{3C\varepsilon^{1/4}}{2}\bigg\}\bigg).
\end{align}
Therefore, it is sufficient to only consider the case when $z\in (0,2C\varepsilon^{-1/2}]$. 

For $y,z>0$, denote by $\{B_r^{\varepsilon^{3/4}z,1,\varepsilon^{3/4}y}\}_{0\leq r\leq 1}$ a Brownian bridge from $\varepsilon^{3/4}z$ to $\varepsilon^{3/4}y$ within time 1. Define 
\[
\tau_0=\tau_0(y,z):=\inf \Big\{r\in[0,1]:B_{r}^{\varepsilon^{3/4}z,1,\varepsilon^{3/4}y}=0\Big\}
\] 
and $\tau'$ to be $\tau$ under the setting of Brownian bridge 
\[
\tau':=\inf \Big\{r\geq 0: B_r^{\varepsilon^{3/4}z,1,\varepsilon^{3/4}y}\geq 2C\varepsilon^{1/4}\Big\}.
\] 
By convention, $\inf\emptyset=\infty$. We know that (see, e.g., page 86 of \cite{harris2007survival}) the probability that a Brownian bridge avoids the origin is 
\[
P(\tau_0=\infty)=1-e^{-2\varepsilon^{3/2}yz}.
\]
Furthermore, according to the first part of the proof of Lemma 7 in \cite{harris2007survival}, a Brownian bridge that is conditioned to avoid the origin has the same law as a Bessel bridge. Together with the inequality
\[
\frac{1}{1-e^{-x}}\leq \frac{2}{x}1_{\{0<x<1\}}+2\cdot1_{\{x\geq 1\}}\leq \frac{2}{x}+2,
\]
we have for $\varepsilon$ sufficiently small,
\begin{align}\label{largeMubPoissonfirsteventBM}
&P\bigg(\Big\{\tau\leq\frac{1}{2}\Big\}\cap \bigg\{\min_{0\leq r\leq \varepsilon}X_{\tau+r}\leq \frac{3C\varepsilon^{1/4}}{2}\bigg\}\bigg)\nonumber\\
&\hspace{0.2in}=P\bigg(\Big\{\tau\leq\frac{1}{2}\Big\}\cap \bigg\{\min_{0\leq r\leq \varepsilon}\Big(X_{\tau+r}-X_{\tau}\Big)\leq -\frac{C\varepsilon^{1/4}}{2}\bigg\}\bigg)\nonumber\\
&\hspace{0.2in}=P\bigg(\Big\{\tau'\leq\frac{1}{2}\Big\}\cap \bigg\{\min_{0\leq r\leq \varepsilon}\Big(B^{\varepsilon^{3/4}z,1,\varepsilon^{3/4}y}_{\tau'+r}-B^{\varepsilon^{3/4}z,1,\varepsilon^{3/4}y}_{\tau'}\Big)\leq -\frac{C\varepsilon^{1/4}}{2}\bigg\}\bigg| \tau_0=\infty\bigg)\nonumber\\
&\hspace{0.2in}=\frac{P\Big(\big\{\tau'\leq\frac{1}{2}\big\}\cap \Big\{\min_{0\leq r\leq \varepsilon}\Big(B^{\varepsilon^{3/4}z,1,\varepsilon^{3/4}y}_{\tau'+r}-B^{\varepsilon^{3/4}z,1,\varepsilon^{3/4}y}_{\tau'}\Big)\leq -C\varepsilon^{1/4}/2\Big\}\cap \{\tau_0=\infty\}\Big)}{P(\tau_0=\infty)}\nonumber\\
&\hspace{0.2in}\leq \bigg(\frac{1}{\varepsilon^{3/2}yz}+2\bigg)P\bigg(\Big\{\tau'\leq\frac{1}{2}\Big\}\cap \bigg\{\min_{0\leq r\leq \varepsilon}(B^{\varepsilon^{3/4}z,1,\varepsilon^{3/4}y}_{\tau'+r}-B^{\varepsilon^{3/4}z,1,\varepsilon^{3/4}y}_{\tau'})\leq -\frac{C\varepsilon^{1/4}}{2}\bigg\}\bigg).
\end{align}
Now we are going to bound the probability of the above event under the setting of the Brownian bridge. Let $\mathcal{F}_{\tau'}$ be the $\sigma-$field generated by the stopping time $\tau'$. Conditioning on $\mathcal{F}_{\tau'}$,
\begin{align}
\label{largeMubPoissonfirsteventcond}
&P\bigg(\Big\{\tau'\leq\frac{1}{2}\Big\}\cap \bigg\{\min_{0\leq r\leq \varepsilon}(B^{\varepsilon^{3/4}z,1,\varepsilon^{3/4}y}_{\tau'+r}-B^{\varepsilon^{3/4}z,1,\varepsilon^{3/4}y}_{\tau'})\leq -\frac{C\varepsilon^{1/4}}{2}\bigg\}\bigg)\nonumber\\
&\hspace{0.2in}=E\bigg[1_{\{\tau'\leq\frac{1}{2}\}}P \bigg(\min_{0\leq r\leq \varepsilon}\Big(B^{\varepsilon^{3/4}z,1,\varepsilon^{3/4}y}_{\tau'+r}-B^{\varepsilon^{3/4}z,1,\varepsilon^{3/4}y}_{\tau'}\Big)\leq -\frac{C\varepsilon^{1/4}}{2}\bigg|\mathcal{F}_{\tau'}\bigg)\bigg].
\end{align}
Since the Brownian bridge is a strong Markov process (see, e.g., Proposition 1 of \cite{fitzsimmons1993markovian}), the conditional distribution of $\min_{0\leq r\leq \varepsilon}\big(B^{\varepsilon^{3/4}z,1,\varepsilon^{3/4}y}_{\tau'+r}-B_{\tau'}^{\varepsilon^{3/4}z,1,\varepsilon^{3/4}y}\big)$ given $\tau'=1-u$ is the same as the distribution of $\min_{0\leq r\leq \varepsilon}B^{0,u,\varepsilon^{3/4}y-2C\varepsilon^{1/4}}_{r}$ and is independent of $\mathcal{F}_{\tau'}$. Therefore, given $\tau'=1-u$, the probability inside equation (\ref{largeMubPoissonfirsteventcond}) can be written as
\begin{equation}\label{largeMubPoissonfirsteventstrong}
P \bigg(\min_{0\leq r\leq \varepsilon}B^{0,u,\varepsilon^{3/4}y-2C\varepsilon^{1/4}}_{r}\leq -\frac{C\varepsilon^{1/4}}{2} \bigg)=P\bigg(\max_{0\leq r\leq \varepsilon}B^{0,u,-\varepsilon^{3/4}y+2C\varepsilon^{1/4}} _{r}\geq \frac{C\varepsilon^{1/4}}{2}\bigg).
\end{equation}
To bound the probability inside the expectation, we will consider the cases where $y\in (0,\varepsilon^{-1/2}]$ and $y\in (\varepsilon^{-1/2}, \infty)$ separately. For $y\in (0,\varepsilon^{-1/2}]$, we will apply Theorem 2.1 of \cite{beghin1999maximum}, which gives the distribution of the maximum of the beginning period of a Brownian bridge. Let $\beta=C\varepsilon^{1/4}/2$, $\eta=2C\varepsilon^{1/4}-\varepsilon^{3/4}y$ and $s=\varepsilon$. We have
\begin{align}\label{largeMubPoissonfirsteventref}
&P \bigg(\max_{0\leq r\leq \varepsilon}B^{0,u,-\varepsilon^{3/4}y+2C\varepsilon^{1/4}}_{r}\geq \frac{C\varepsilon^{1/4}}{2}\bigg)\nonumber\\
&\hspace{0.2in}=\exp{\Big\{-\frac{2\beta(\beta-\eta)}{u}\Big\}}\int_{-\infty}^{(2\beta s-\eta s-\beta u)/\sqrt{us(u-s)}}\frac{e^{-v^2/2}}{\sqrt{2\pi}}dv+\int_{(\beta u-\eta s)/\sqrt{us(u-s)}}^{\infty}\frac{e^{-v^2/2}}{\sqrt{2\pi}}dv.
\end{align}
On the event $\{\tau'\leq1/2\}$, we have $1/2\leq u<1$. Combined with the fact that $C>2\pi$, we can derive the following limits as $\varepsilon$ approaches 0 for $y\in (0,\varepsilon^{-1/2}]$,
\begin{equation}\label{largeMubPoissonfirsteventasy1}
\frac{2\beta(\beta-\eta)}{u}<0,\quad \frac{2\beta(\beta-\eta)}{u}=O(\varepsilon^{1/2}),
\end{equation}
\begin{equation}\label{largeMubPoissonfirsteventasy2}
\frac{2\beta s-\eta s-\beta u}{\sqrt{us(u-s)}}<0,\quad \frac{2\beta s-\eta s-\beta u}{\sqrt{us(u-s)}}=O(\varepsilon^{-1/4}),
\end{equation}
\begin{equation}\label{largeMubPoissonfirsteventasy3}
\frac{\beta u-\eta s}{\sqrt{us(u-s)}}>0,\quad \frac{\beta u-\eta s}{\sqrt{us(u-s)}}=O(\varepsilon^{-1/4}).
\end{equation}
Note that none of the asymptotic rates above depend on $y$. Moreover, it can be easily shown that
\begin{equation}\label{largeMubPoissonfirsteventgaussian}
\int_{x}^{\infty} \frac{e^{-v^2/2}}{\sqrt{2\pi}}dv\leq \frac{e^{-x^2/2}}{x\sqrt{2\pi}}.
\end{equation}
By (\ref{largeMubPoissonfirsteventref}), (\ref{largeMubPoissonfirsteventasy1}), (\ref{largeMubPoissonfirsteventasy2}), (\ref{largeMubPoissonfirsteventasy3}) and (\ref{largeMubPoissonfirsteventgaussian}), we see that there exists a positive constant $C_{10}$ such that for $\varepsilon$ sufficiently small, for all $y\in (0,\varepsilon^{-1/2}]$, given $\tau'=1-u\leq1/2$, 
\begin{equation}\label{largeMubPoissonfirsteventub}
P \bigg(\max_{0\leq r\leq \varepsilon}B^{0,u,-\varepsilon^{3/4}y+2C\varepsilon^{1/4}}_{r}\geq \frac{C\varepsilon^{1/4}}{2}\bigg)\leq e^{-C_{10}/\sqrt{\varepsilon}}.
\end{equation}
On the other hand, if $y\in (\varepsilon^{-1/2},\infty)$, given $\tau'=1-u\leq1/2$, by Lemma \ref{stochasticdominanceBM},
\begin{align}\label{largeMubPoissonfirsteventublargey}
P \bigg(\max_{0\leq r\leq \varepsilon}B^{0,u,-\varepsilon^{3/4}y+2C\varepsilon^{1/4}}_{r}\geq \frac{C\varepsilon^{1/4}}{2}\bigg)
&\leq P \bigg(\max_{0\leq r\leq \varepsilon}B^{0,u,(2C-1)\varepsilon^{1/4}}_{r}\geq \frac{C\varepsilon^{1/4}}{2}\bigg)\nonumber\\
&\leq e^{-C_{10}/\sqrt{\varepsilon}}.
\end{align}
As a result, when $z\in (0,2C\varepsilon^{-1/2}]$, taking $C_9<C_{10}$, equation (\ref{largeMubPoissonfirsteventformula}) follows from (\ref{largeMubPoissonfirsteventBM}), (\ref{largeMubPoissonfirsteventcond}), (\ref{largeMubPoissonfirsteventstrong}), (\ref{largeMubPoissonfirsteventub}) and (\ref{largeMubPoissonfirsteventublargey}). 

\qedwhite

\noindent\textit{Proof of Lemma \ref{largeMub}}.
We first transform (\ref{Lem2}) from the setting of branching Brownian motion with absorption and drift into standard branching Brownian motion. Let $P$ be the law of a standard BBM started from $0$ without drift and absorption. We have for $s\in [0,\varepsilon^{-3/2}]$,
\begin{align}
&P^{C/\sqrt{\varepsilon}+\varepsilon s/\rho}_{-\rho}\bigg(\exists u\in \mathcal{N}^{-\rho}_{s}: Y_u(r) >0\; \forall\; r \leq s, \;Y_u(s)> \frac{C}{4\sqrt{\varepsilon}}\bigg)\nonumber\\
&\hspace{0.2in}=P\bigg(\exists u\in \mathcal{N}_{s}: Y_u(r) +\frac{C}{\sqrt{\varepsilon}}+\frac{\varepsilon s}{\rho}-\rho r>0\; \forall r \leq s, \;Y_u(s)+\frac{C}{\sqrt{\varepsilon}}+\frac{\varepsilon s}{\rho}-\rho s > \frac{C}{4\sqrt{\varepsilon}}\bigg)\nonumber\\
&\hspace{0.2in}\geq P\bigg(\exists u\in \mathcal{N}_{s}: Y_u(r)> \rho r -\frac{\varepsilon s}{\rho}-\frac{3C}{4\sqrt{\varepsilon}} \; \forall  r \leq s\bigg).
\label{Lem2smalls}
\end{align}
Then we will apply Theorem 1 in Roberts \cite{roberts2015fine}, which gives the explicit formula of a curve such that at least one particle stays above this curve all the time with nonzero probability. Borrowing notations from \cite{roberts2015fine},  we let $A_c=3^{4/3}\pi^{2/3}2^{-7/6}$ and
\[
g(s)=\sqrt{2}s-A_cs^{1/3}+\frac{A_cs^{1/3}}{\log^2(s+e)}-1.
\]
Theorem 1 in Roberts \cite{roberts2015fine} states that there exists some nonzero absolute constant $C_6$, such that
\[
P(\forall s\geq 0, \exists u\in \mathcal{N}_{s}: Y_u(r)>g(r) \;\forall r\leq s) > C_6.
\]

\noindent Together with our choice of $C>2\pi$ and the Taylor expansion for $\rho$, we have for $\varepsilon$ sufficiently small, for all $r\leq s$,
\[
\rho r-\frac{\varepsilon s}{\rho} -\frac{3C}{4\sqrt{\varepsilon}}
=\sqrt{2}r+\frac{\varepsilon r}{\rho}+O(\varepsilon^2)r-\frac{\varepsilon s}{\rho}-\frac{3C}{4\sqrt{\varepsilon}} \leq \sqrt{2}r-\frac{3C}{4\sqrt{\varepsilon}}+O(\varepsilon^{1/2}),
\]
\[
g(r)\geq \sqrt{2}r-A_c(\varepsilon^{-3/2})^{1/3}-1\geq \sqrt{2}r-\frac{3C}{4\sqrt{\varepsilon}}+O(\varepsilon^{1/2}).
\]
As a result, for all $s\in [0,\varepsilon^{-3/2})$,
\begin{equation}\label{Lem2standardbbm}
P\bigg(\exists u\in \mathcal{N}_{s}: Y_u(r)> \rho r -\frac{\varepsilon s}{\rho}-\frac{3C}{4\sqrt{\varepsilon}} \; \forall  r \leq s\bigg) \geq P\big(\forall s\leq 0, \exists u\in \mathcal{N}_{s}: Y_u(r)>g(r) \;\forall r\leq s\big) > C_6.
\end{equation}
The lemma follows from (\ref{Lem2smalls}) and (\ref{Lem2standardbbm}).
\qedwhite
\\

\noindent\textit{Proof of Lemma \ref{largeMubfinal}.}
From Lemma \ref{largeMub}, we know that if a particle starts from $C/\sqrt{\varepsilon}+\varepsilon s/\rho$, it will have a descendant at time $s$ which stays to the right of $C/(4\sqrt{\varepsilon})$ with probability at least $C_6$. So if we have a particle branching off the spine at a position and time $(t-t_i,\zeta_{t_i})$ satisfying $0\leq t_i\leq \varepsilon^{-3/2}$ and $C/\sqrt{\varepsilon}+\varepsilon t_i/\rho\leq \zeta_{t_i}$, then 
\[
P^{\zeta_{t_i}}_{-\rho}\Big(\exists u\in \mathcal{N}_{t_i}^{\rho}:Y_u(t_i)>\frac{C}{4\sqrt{\varepsilon}}\Big)\geq P_{-\rho}^{C/\sqrt{\varepsilon}+\varepsilon t_i/\rho}\Big(\exists u\in \mathcal{N}_{t_i}^{\rho}:Y_u(t_i)>\frac{C}{4\sqrt{\varepsilon}}\Big)\geq C_6.
\]
Combined with Lemmas \ref{largeMubPoisson} and \ref{largeMub} and the branching property, we have
\begin{align*}
&Q^{y,\varepsilon^{-3/2},z}\bigg(\Big\{\forall u\in\mathcal{N}_{\varepsilon^{-3/2}}^{-\rho}, Y_u(\varepsilon^{-3/2})\leq \frac{C}{4\sqrt{\varepsilon}}\Big\}\cap \{M\geq 2C\sqrt{\varepsilon}\}\bigg)\\
&\hspace{0.1in}\leq Q^{y,\varepsilon^{-3/2},z}\bigg(\{M\geq 2C\sqrt{\varepsilon}\}\cap \bigg\{\sum_{i=1}^{N_\varepsilon}1_{\{\varepsilon \zeta_{t_i}-\frac{1}{\rho}\varepsilon^{2}t_i\geq C\sqrt{\varepsilon}\}}\leq \frac{1}{\sqrt{\varepsilon}}\bigg\}\bigg)\\
&\hspace{0.1in}\quad+Q^{y,\varepsilon^{-3/2},z}\bigg(\forall u\in\mathcal{N}_{\varepsilon^{-3/2}}^{-\rho}, Y_u(\varepsilon^{-3/2})\leq \frac{C}{4\sqrt{\varepsilon}}\Big| \{M\geq 2C\sqrt{\varepsilon}\}\cap\bigg\{\sum_{i=1}^{N_\varepsilon}1_{\{\varepsilon \zeta_{t_i}-\frac{1}{\rho}\varepsilon^{2}t_i\geq C\sqrt{\varepsilon}\}}\geq \frac{1}{\sqrt{\varepsilon}}\bigg\}\bigg)\\
&\hspace{0.1in}\quad\times Q^{y,\varepsilon^{-3/2},z}\Bigg(\bigg\{\sum_{i=1}^{N_\varepsilon}1_{\{\varepsilon \zeta_{t_i}-\frac{1}{\rho}\varepsilon^{2}t_i\geq C\sqrt{\varepsilon}\}}\geq \frac{1}{\sqrt{\varepsilon}}\bigg\}\cap \{M\geq 2C\sqrt{\varepsilon}\}\Bigg)\\
&\hspace{0.1in}\leq \bigg(\frac{2}{yz}+6\bigg)e^{-C_5/\sqrt{\varepsilon}}+(1-C_6)^{1/\sqrt{\varepsilon}}.
\end{align*}
Note that if there exists a $u\in\mathcal{N}_{\varepsilon^{-3/2}}^{-\rho}$ such that $Y_u(\varepsilon^{-3/2})\geq C/(4\sqrt{\varepsilon})$, then for $\varepsilon$ small enough, there exists a $0<C_{11}< C\rho/4$ satisfying
\[
\frac{1}{\sum_{u}Y_u(\varepsilon^{-3/2})e^{\rho Y_u(\varepsilon^{-3/2})}}\leq \frac{4\sqrt{\varepsilon}}{C}e^{-C\rho/(4\sqrt{\varepsilon})}\leq e^{-C_{11}/\sqrt{\varepsilon}}.
\]
As a result,
\begin{align}\label{largeMubeq}
&Q^{y}\Big[\frac{1}{\sum_{u}Y_u(\varepsilon^{-3/2})e^{\rho Y_u(\varepsilon^{-3/2})}}1_{\{M\geq 2C\sqrt{\varepsilon}\}}\Big| \xi_{\varepsilon^{-3/2}}=z\Big]\nonumber\\
&\hspace{0.2in}\leq \frac{1}{ze^{\rho z}}Q^{y,\varepsilon^{-3/2},z}\bigg(\Big\{\forall u\in\mathcal{N}_{\varepsilon^{-3/2}}^{-\rho}, Y_u(\varepsilon^{-3/2})\leq \frac{C}{4\sqrt{\varepsilon}}\Big\}\cap \{M\geq 2C\sqrt{\varepsilon}\}\bigg)\nonumber\\
&\hspace{0.2in}\quad\;+e^{-C_{11}/\sqrt{\varepsilon}}Q^{y,\varepsilon^{-3/2},z}\bigg(\Big\{\exists u\in\mathcal{N}_{\varepsilon^{-3/2}}^{-\rho}, Y_u(\varepsilon^{-3/2})> \frac{C}{4\sqrt{\varepsilon}}\Big\}\cap \{M\geq 2C\sqrt{\varepsilon}\}\bigg)\nonumber\\
&\hspace{0.2in}\leq \frac{2}{yz^2}e^{-C_5\sqrt{\varepsilon}}+\frac{6}{z}e^{-C_5/\sqrt{\varepsilon}}+\frac{1}{z}(1-C_6)^{1/\sqrt{\varepsilon}}+e^{-C_{11}/\sqrt{\varepsilon}}.
\end{align}
Letting $0<C_7<\min\{C_5, -\log(1-C_6), C_{11}\}$, the lemma is proved.
\qedwhite
\\

\noindent\textit{Proof of Lemma \ref{smallMub}.}
First note that if $y\in [(2C+1/\rho)\varepsilon^{-1/2}, \infty)$, then $M\geq2C\sqrt{\varepsilon}$ and  therefore the inequality (\ref{smallMubeq}) holds trivially. It only remains to consider the case where $y\in (0,(2C+1/\rho)\varepsilon^{-1/2})$.

Observe that there is a simple upper bound for (\ref{smallMubeq})
\begin{align}\label{Lem4trivial}
Q^{y}\bigg[\frac{1}{\sum_{u}Y_u(\varepsilon^{-3/2})e^{\rho Y_u(\varepsilon^{-3/2})}}1_{\{M < 2C\sqrt{\varepsilon}\}}\bigg| \xi_{\varepsilon^{-3/2}}=z\bigg]&\leq \frac{1}{z}Q^{y,\varepsilon^{-3/2},z}(M< 2C \sqrt{\varepsilon}).
\end{align}
Furthermore, because $1/\rho<C$,
\begin{align*}
Q^{y,\varepsilon^{-3/2},z}(M< 2C\sqrt{\varepsilon})
&=Q^{y,\varepsilon^{-3/2},z}\Big(\sup_{0\leq s\leq \varepsilon^{-3/2}}(\varepsilon \zeta_s-\frac{1}{\rho}\varepsilon^2 s)< 2C\sqrt{\varepsilon}\Big)\\
&\leq Q^{y,\varepsilon^{-3/2},z}\bigg(\sup_{0\leq s\leq \varepsilon^{-3/2}} \zeta_s< \frac{3C}{\sqrt{\varepsilon}}\bigg)\\
&= Q^{y,\varepsilon^{-3/2},z}\Big(\sup_{0\leq r\leq 1} \varepsilon^{3/4}\zeta_{\varepsilon^{-3/2 }r}< 3C\varepsilon^{1/4}\Big).
\end{align*}
Notice that under $Q^{y,\varepsilon^{-3/2},z}$, the process $\{\varepsilon^{3/4}\zeta_{\varepsilon^{-3/2}r}\}_{0\leq r\leq 1}$ is a Bessel bridge from $\varepsilon^{3/4}z$ to $\varepsilon^{3/4}y$ in time $1$. Recall that $\{X_r^{\varepsilon^{3/4} z,1,\varepsilon^{3/4} y}\}_{0\leq r\leq 1}$ denotes a Bessel bridge from $\varepsilon^{3/4}z$ to $\varepsilon^{3/4}y$ in time 1. For simplicity, below we will omit the superscript of $\{X_r^{\varepsilon^{3/4} z,1,\varepsilon^{3/4} y}\}_{0\leq r\leq 1}$. Therefore,  we have
\begin{equation}\label{Lem4scaling}
Q^{y,\varepsilon^{-3/2},z}(M< 2C\sqrt{\varepsilon})
\leq P\Big(\sup_{0\leq r\leq 1} X_r< 3C\varepsilon^{1/4}\Big).
\end{equation}
According to (0.22) of  \cite{pitman2006combinatorial}, let $B_{(1)}^{0,1,0}, B_{(2)}^{0,1,0}, B_{(3)}^{0,1,0}$ be three independent standard Brownian bridges, we have
\begin{equation*}
X^{0,1,0}\xlongequal{d}\sqrt{\Big(B_{(1)}^{0,1,0}\Big)^2+\Big(B_{(2)}^{0,1,0}\Big)^2+\Big(B_{(3)}^{0,1,0}\Big)^2}.
\end{equation*}
According to Lemma \ref{stochasticdominance} and the above formula, letting $\{X^{0,1,0}_r\}_{0\leq r\leq 1}$ be a Bessel bridge from 0 to 0 in time 1 and $\{B^{0,1,0}_r\}_{0\leq r\leq 1}$ be a Brownian bridge from 0 to 0 in time 1, for $z \in (0,\varepsilon^{-1/2}]$ and $y\in (0,(2C+1/\rho)\varepsilon^{-1/2})$, we get
\begin{equation}\label{Lem4coupling}
P\Big(\sup_{0\leq r\leq 1} X_r< 3C\varepsilon^{1/4}\Big) \leq P\Big(\sup_{0\leq r\leq 1} X^{0,1,0}_r< 3C\varepsilon^{1/4}\Big) 
\leq \bigg[P\Big(\sup_{0\leq r\leq 1} |B_r^{0,1,0}|< 3C\varepsilon^{1/4} \Big)\bigg]^3.
\end{equation}
From Lemma \ref{RBMa=0}, for $\varepsilon$ sufficiently small, 
\begin{equation}\label{Lem4final}
P\Big(\sup_{0\leq r\leq 1}|B^{0,1,0}_r|<3C\varepsilon^{1/4}\Big)\leq C^{-1}\varepsilon^{-1/4}\exp{\Big\{-\frac{\pi^2}{72C^2\sqrt{\varepsilon}}\Big\}}.
\end{equation}
In the end, setting $0<C_8<\pi^2/(24C^2)$, by (\ref{Lem4trivial})--(\ref{Lem4final}), Lemma \ref{smallMub} is proved.
\qedwhite

\section{Lower bound}
\subsection{Proof of the Lower bound}

In this section, we will prove the lower bound (\ref{liminf}). We first state two lemmas, which are the key ingredients in the proof of the lower bound.

We observe that for $\varepsilon$ sufficiently small, the probability that particles which branch off the spine before a large time have descendants at time $t$ is small. As a result, in order to deal with the lower bound, we only need to consider particles that branch off the spine after a large time. We will start by finding this cutoff time $t^*$. 

Let  $0<\delta_1<\delta_2<1/4$. We denote 
\[
t^*:=t-\bigg(\frac{4}{\varepsilon}\bigg)^{2/(1-2\delta_1)}, \quad t':=t-t^{1/2+\delta_2}.
\]
Define $V_{1}$ to be the event that particles that branch off the spine before time $t'$ have descendants alive at time $t$ and the spine stays below $(t')^{1/2+\delta_1}$ for all $s\leq t'$. Define $V_{2}$ to be the event that particles that branch off the spine before time $t'$ have descendants alive at time $t$ and the spine crosses the curve $(t')^{1/2+\delta_1}$ for some $s\leq t'$. Define $V_{3}$ to be the event that particles that branch off the spine between time $t'$ and $t^*$ have descendants alive at time $t$ and the spine stays below the curve $s^{1/2+\delta_1}$ for all $s\in(t', t^*]$. Define $V_{4}$ to be the event that particles that branch off the spine between time $t'$ and $t^*$ have descendants alive at time $t$ and the spine crosses the curve $s^{1/2+\delta_1}$ for some $s\in (t', t^*]$. More precisely, 
\begin{equation}\label{V1def}
V_1 =\{\exists u\in \mathcal{N}_t:O_u\leq t'\}\cap\{\xi_s\leq (t')^{1/2+\delta_1},\;\forall s\leq t' \},
\end{equation}
\begin{equation}\label{V2def}
V_2=\{\exists u\in \mathcal{N}_t:O_u\leq t'\}\cap\{\exists\; s\leq t': \xi_s> (t')^{1/2+\delta_1} \},
\end{equation}
\begin{equation}\label{V3def}
V_3=\{\exists u\in \mathcal{N}_t:t' < O_u\leq t^*\}\cap\{ \xi_s\leq s^{1/2+\delta_1}, \;\forall s\in (t',t^*] \},
\end{equation}
\begin{equation}\label{V4def}
V_4=\{\exists u\in \mathcal{N}_t:t'< O_u\leq t^*\}\cap\{\exists\; s\in (t', t^*]: \xi_s> s^{1/2+\delta_1} \}.
\end{equation}
Then we have,
\[
\{\exists u\in \mathcal{N}_t: O_u\leq t^*\} = V_1\cup V_2\cup V_3\cup V_4.
\]

\begin{Lemma} \label{cutoff}
For any $0<\delta<1/2$, if $\varepsilon$ is sufficiently small, then for all $z\in (0,\varepsilon^{-1/2}]$,
\begin{equation} \label{t*}
\limsup_{t\rightarrow \infty} Q^{x,t,z}\bigg(\bigcup_{i=1}^4 V_i\bigg)< \delta.
\end{equation}
\end{Lemma}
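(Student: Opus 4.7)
The plan is to apply the union bound $Q^{x,t,z}(\bigcup_i V_i)\le\sum_{i=1}^4 Q^{x,t,z}(V_i)$, choose $\delta_1,\delta_2$ depending on $\delta$, and show that for $\varepsilon$ small enough each of the four terms is at most $\delta/4$ in the $\limsup$. The four events split naturally into two pairs: $V_2,V_4$ (where the spine makes an atypical excursion above the curve) are bounded purely using the geometry of the Bessel-3 bridge, while $V_1,V_3$ (where the spine stays below the curve but a particle born early still survives) are bounded by a first-moment computation on the subtrees hanging off the spine.

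For $V_2$ and $V_4$, under $Q^{x,t,z}$ the spine $\xi$ is a Bessel-3 bridge from $x$ to $z\le\varepsilon^{-1/2}$ on $[0,t]$, whose typical scale at time $s$ is $\sqrt{s(t-s)/t}=O(\sqrt{t})$. Since the curves $(t')^{1/2+\delta_1}$ and $s^{1/2+\delta_1}$ (with $\delta_1>0$) grow strictly faster than $\sqrt t$, one has
\[
Q^{x,t,z}(V_2)\le Q^{x,t,z}\!\Big(\sup_{s\le t'}\xi_s>(t')^{1/2+\delta_1}\Big),\qquad Q^{x,t,z}(V_4)\le Q^{x,t,z}\!\Big(\exists s\in(t',t^*]:\xi_s>s^{1/2+\delta_1}\Big),
\]
and standard bridge tail estimates (obtained either by writing the Bessel-3 bridge as the modulus of a three-dimensional Brownian bridge and invoking the reflection principle, or via Lemma \ref{RBMa=0} applied componentwise) show that both right-hand sides tend to $0$ as $t\to\infty$ for every fixed $\varepsilon$.

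For $V_1$ and $V_3$, use the spinal decomposition: conditional on $\{\xi_s\}_{s\le t}$, particles branch off the spine at rate $2$, and a particle born at $(\xi_s,s)$ starts an independent BBM with absorption. By Markov's inequality,
\[
Q^{x,t,z}(V_i)\;\le\;2\,E^{x,t,z}\!\left[\mathbf{1}_{A_i}\int_{I_i}P^{\xi_s}_{-\rho}\!\left(N_{t-s}^{-\rho}>0\right)ds\right],\qquad i=1,3,
\]
where $(I_1,A_1)=([0,t'],\{\xi_s\le(t')^{1/2+\delta_1}\})$ and $(I_3,A_3)=((t',t^*],\{\xi_s\le s^{1/2+\delta_1}\})$. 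The upper bound $P^{x_0}_{-\rho}(N_r^{-\rho}>0)\le C\,x_0 e^{\rho x_0-\varepsilon r}r^{-3/2}$ (obtainable from the explicit hitting-time density $f_{\tau_0}(s)=\frac{x_0}{\sqrt{2\pi s^3}}e^{-(x_0-\rho s)^2/(2s)}$ combined with $E^{x_0}_{-\rho}[N_r^{-\rho}]=e^r P^{x_0}(\tau_0>r)$) reduces each integral to an explicit computation. For $V_1$, the window $I_1$ forces $t-s\ge t^{1/2+\delta_2}$ while $\xi_s\le(t')^{1/2+\delta_1}$, so the exponent $\rho(t')^{1/2+\delta_1}-\varepsilon\,t^{1/2+\delta_2}\to-\infty$ because $\delta_2>\delta_1$; hence $Q^{x,t,z}(V_1)\to 0$ for every fixed $\varepsilon$. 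For $V_3$, the window gives $t-s\ge (4/\varepsilon)^{2/(1-2\delta_1)}$, so $\varepsilon(t-s)\gtrsim 4^{2/(1-2\delta_1)}\varepsilon^{-(1+2\delta_1)/(1-2\delta_1)}\to\infty$ as $\varepsilon\to 0$, which beats the worst case $\rho\xi_s\le\rho\,t^{1/2+\delta_1}$ once one exploits the Bessel-bridge concentration near the endpoint $z$ rather than only the pointwise curve bound.

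The main obstacle is $V_3$. The crude pointwise estimate $\xi_s\le s^{1/2+\delta_1}$ on $A_3$ lets $e^{\rho\xi_s}$ grow in $t$, which the factor $e^{-\varepsilon(t-s)}$ alone cannot dominate for fixed $\varepsilon$ and large $t$. One must integrate out $\xi_s$ against the bridge density $p_s^{x,t,z}(\cdot)$ (whose form was already computed in (\ref{density})), which localizes $\xi_s$ near $z\le\varepsilon^{-1/2}$ on scale $\sqrt{t-s}$; equivalently, split $A_3$ further into sub-events $\{k\le\xi_s<k+1\}$ and apply the survival bound levelwise. The choice $t^*=t-(4/\varepsilon)^{2/(1-2\delta_1)}$ is tailored exactly so that after this integration the small factor $e^{\rho z-\varepsilon(t-s)}\le e^{\rho/\sqrt\varepsilon-\varepsilon\,(4/\varepsilon)^{2/(1-2\delta_1)}}$ is less than $\delta/4$ for $\varepsilon$ small. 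Taking $0<\delta_1<\delta_2<1/4$ with $\delta_2-\delta_1$ fixed and $\varepsilon$ sufficiently small, summing the four bounds completes the proof.
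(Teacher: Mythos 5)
Your overall architecture is right and matches the paper (union bound; first-moment estimate via the spinal decomposition and the survival bound for $V_1,V_3$; bridge-tail / scaling arguments for $V_2,V_4$), and your treatment of $V_1$, $V_2$, $V_4$ is sound. The genuine gap is in $V_3$, and it concerns the claim that integrating $\xi_s$ against the bridge density yields a factor of the form $e^{\rho z-\varepsilon(t-s)}$.

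That claim does not hold. With $r=t-s$ the relevant integral is
\[
\int_0^{\infty} e^{\rho y-\varepsilon r}\,p_r(z,y)\,dy,
\]
and the Bessel-3 kernel $p_r(z,\cdot)$ has a Gaussian tail of scale $\sqrt r$ about $z$, so the $e^{\rho y}$ tilt produces a moment-generating-function factor $\sim e^{\rho z+\rho^2 r/2}$; since $\rho^2/2-\varepsilon=1$, the net exponent is $\rho z+r$, which \emph{diverges} in $r$. The constraint $\xi_s\le s^{1/2+\delta_1}$ from $A_3$ does not save you here: for $s$ near $t^*$ the remaining time $r=t-s$ is bounded (of order $(4/\varepsilon)^{2+\kappa}$), while $s^{1/2+\delta_1}\to\infty$ with $t$, so the curve constraint is vacuous exactly where the tilted integral is worst. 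The levelwise split over $\{k\le\xi_s<k+1\}$ just re-expresses the same MGF sum and hits the same problem; and using the sharper survival estimate $Cx_0 e^{\rho x_0-\varepsilon r}r^{-3/2}$ only supplies a polynomial correction to an exponentially divergent quantity. In short, ``concentration of the bridge near $z$'' by itself cannot deliver the factor $e^{\rho z-\varepsilon r}$; one must also control the region where $e^{\rho\xi_s-\varepsilon r}\le 1$ fails, and your proposal does not say how.

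The paper's actual proof of the $V_3$ bound uses a different key idea that you did not identify: it \emph{reparameterizes the curve} from elapsed time $s$ to remaining time $r=t-s$, writing $V_3\le H_1+H_2$ with
\[
H_1=\{\exists u: t'<O_u\le t^*\}\cap\{\xi_s\le (t-s)^{1/2+\delta_1},\;\forall s\in(t',t^*]\},\qquad
H_2=Q^{x,t,z}\big(\exists s\in(t',t^*]:\xi_s\ge (t-s)^{1/2+\delta_1}\big).
\]
On $H_1$ the pointwise curve bound becomes effective because $r\ge t-t^*=(4/\varepsilon)^{2/(1-2\delta_1)}$ is exactly tuned so that $\rho r^{1/2+\delta_1}\le\rho\varepsilon r/4$, making the exponent $\rho\xi_s-\varepsilon r\le-\varepsilon r(1-\rho/4)$ strongly negative; no bridge localization is needed (indeed the paper only uses $\int_0^{r^{1/2+\delta_1}}p_r(z,y)dy\le1$ and a bounded density ratio $p_{t-r}(y,x)/p_t(z,x)\le4$). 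The complementary event $H_2$ is controlled not by a first-moment argument at all, but by passing to the time-reversed spine, using Lemma~\ref{weakconvergence} and the continuity-set Lemma~\ref{appendix} to convert to a Bessel process, and invoking the law of the iterated logarithm to show the Bessel process exceeds $r^{1/2+\delta_1}$ after time $(4/\varepsilon)^{2/(1-2\delta_1)}$ with probability $<\delta/2$ for small $\varepsilon$. These two ideas --- the curve reparameterization for $H_1$ and the LIL argument for $H_2$ --- are the substance of the proof of the $V_3$ bound, and both are absent from your proposal.
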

Note that as $\delta_1$ goes to 0, $2/(1-2\delta_1)$ goes to 2. Roughly speaking, this lemma shows that only particles which branch off the spine within the last $\varepsilon^{-2}$ time will contribute significantly to our expectation in Proposition \ref{K}. For simplicity, letting $\kappa=4\delta_1/(1-2\delta_1)>0$, we will also write the cutoff time $t^*$ as
\[
t^*=t-\bigg(\frac{4}{\varepsilon}\bigg)^{2+\kappa}.
\]

We need one more lemma to finish the proof of (\ref{liminf}). Define
\[
M'=\sup_{0\leq s\leq (4/\varepsilon)^{2+\kappa}}\bigg(\varepsilon\zeta_s-\frac{1}{\rho}\varepsilon^2 s\bigg).
\]
Similarly to the proof of upper bound, we will divide the space into two parts, $\{M'\leq C\sqrt{\varepsilon}\}$ and $\{M'> C\sqrt{\varepsilon}\}$ for some constant $C>2\sqrt{3}$. Since this time we focus on the lower bound, it is enough to consider only one of them. 
\begin{Lemma}\label{M'}
Let $C>2\sqrt{3}$. There exists a positive constant $C_{12}$ such that for $\varepsilon$ sufficiently small, for all $z\in (0,\varepsilon^{-1/2}]$ and $y\in (0,\varepsilon^{-1-\kappa}]$, we have
\[
Q^{y,(4/\varepsilon)^{2+\kappa},z}\Big(M'\leq C\sqrt{\varepsilon}\Big)\geq \varepsilon^{-3/4}e^{-C_{12}/\sqrt{\varepsilon}}.
\]
\end{Lemma}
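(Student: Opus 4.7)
The plan is to exploit the representation of the Bessel-3 bridge as the Euclidean norm of a three-dimensional Brownian bridge, reducing the event $\{M'\le C\sqrt\varepsilon\}$ to three independent reflected-Brownian-bridge-below-a-line events to which Lemma~\ref{RBMa=0} applies. Cubing the $O(\varepsilon^{-1/4})$ prefactor from that lemma produces exactly the $\varepsilon^{-3/4}$ prefactor in the stated bound.

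First I would rescale: set $T:=(4/\varepsilon)^{2+\kappa}$ and $\tilde\zeta_r:=\zeta_{Tr}/\sqrt T$ for $r\in[0,1]$. By the scaling invariance of the Bessel-3 bridge, $\tilde\zeta$ is a Bessel bridge from $\alpha:=z/\sqrt T$ to $\beta:=y/\sqrt T$ over $[0,1]$, and the event $\{M'\le C\sqrt\varepsilon\}$ becomes $\{\tilde\zeta_r\le b+ar,\ \forall r\in[0,1]\}$ with $b:=C/(\sqrt\varepsilon\sqrt T)=O(\varepsilon^{(1+\kappa)/2})$, $a:=\varepsilon\sqrt T/\rho=O(\varepsilon^{-\kappa/2})$, and crucially $b(a+b)\sim C\sqrt\varepsilon/\rho$. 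The hypothesis $C>2\sqrt 3$ gives $\alpha\le b/(2\sqrt 3)$ uniformly over $z\in(0,\varepsilon^{-1/2}]$, and a direct calculation shows $\beta\le(b+a)/(2\sqrt 3)$ uniformly over $y\in(0,\varepsilon^{-1-\kappa}]$, so that the linear interpolant satisfies $(1-r)\alpha+r\beta\le(b+ar)/(2\sqrt 3)$ for every $r\in[0,1]$.

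Next I would realize $\tilde\zeta_r=\|W_r\|$, where $W=(W^{(1)},W^{(2)},W^{(3)})$ is a three-dimensional Brownian bridge from $(\alpha,0,0)$ to $(\beta,0,0)$; its coordinates are then independent Brownian bridges, with $W^{(1)}$ going from $\alpha$ to $\beta$ and $W^{(2)},W^{(3)}$ standard bridges from $0$ to $0$. Since $|W^{(i)}_r|\le(b+ar)/\sqrt 3$ for every $i$ implies $\tilde\zeta_r\le b+ar$, independence of the coordinates gives
\[
Q^{y,T,z}(M'\le C\sqrt\varepsilon)\ \ge\ \prod_{i=1}^{3} P\Big(|W^{(i)}_r|\le(b+ar)/\sqrt 3,\ \forall\,r\in[0,1]\Big).
\]
For $i=2,3$, Lemma~\ref{RBMa=0} applies directly with $b'=b/\sqrt 3$, $a'=a/\sqrt 3$ and $b'(a'+b')\sim C\sqrt\varepsilon/(3\rho)$, yielding a lower bound of order $\varepsilon^{-1/4}\exp(-3\pi^2\rho/(8C\sqrt\varepsilon))$. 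For $i=1$, I would write $W^{(1)}_r=(1-r)\alpha+r\beta+B^{0,1,0}_r$ in the spirit of Lemma~\ref{stochasticdominanceBM}; using the bound on $(1-r)\alpha+r\beta$ established above, the constraint on $|W^{(1)}|$ is implied by $|B^{0,1,0}_r|\le(b+ar)/(2\sqrt 3)$, and Lemma~\ref{RBMa=0} applied with $b''=b/(2\sqrt 3)$, $a''=a/(2\sqrt 3)$ again gives a lower bound of order $\varepsilon^{-1/4}\exp(-3\pi^2\rho/(2C\sqrt\varepsilon))$. Multiplying the three contributions yields $\varepsilon^{-3/4}\exp(-C_{12}/\sqrt\varepsilon)$ for any $C_{12}$ larger than the sum $9\pi^2\rho/(4C)$ of the three exponential rates, which is a finite constant depending only on (the bounded) $\rho$ and on $C$.

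The main obstacle will be handling the coordinate $W^{(1)}$, because Lemma~\ref{RBMa=0} is stated only for a standard reflected Brownian bridge pinned at $0$ at both ends, whereas $W^{(1)}$ has nonzero endpoints $\alpha,\beta$. The remedy is to sacrifice half of the available vertical room to absorb the deterministic linear interpolant, which shrinks the slope--intercept product by a factor of $4$ and increases the exponent by the same factor; the condition $C>2\sqrt 3$ is exactly what is needed so that this reduction is legal uniformly in $z\in(0,\varepsilon^{-1/2}]$. A final easy check is that all the bounds depend on $y,z$ only through $\alpha$ and $\beta$, whose control is uniform in $\varepsilon$, so the lemma holds uniformly in the prescribed range of $y$ and $z$.
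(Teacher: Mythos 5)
Your proposal is correct and follows essentially the same route as the paper: rescale the Bessel bridge to $[0,1]$ via the scaling property, invoke Pitman's representation of the Bessel-3 bridge as the norm of a three-dimensional Brownian bridge whose coordinates are independent Brownian bridges, absorb the linear interpolant coming from the nonzero endpoints into the first coordinate (using $C>2\sqrt3$ and the ranges of $y,z$), and then apply Lemma~\ref{RBMa=0} to each of the three resulting reflected-Brownian-bridge-below-a-line events. The only cosmetic difference is in how the interpolant is absorbed: you split the available room in half for the first coordinate, giving the symmetric bound $|B^{0,1,0}_r|\le(b+ar)/(2\sqrt3)$ and exponent $9\pi^2\rho/(4C)$, whereas the paper subtracts the interpolant's bounds directly from the slope and intercept, yielding a somewhat cruder admissible $C_{12}>4\pi^2+12\pi^2/(8C)$; since the lemma only requires the existence of some $C_{12}$, both are valid.
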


Below, we will apply above lemmas, together with Jensen's inequality and the martingale property to prove the lower bound.\\

\noindent\textit{Proof of (\ref{liminf}).}
Conditioned on the endpoint of the spinal particle, we have 
\begin{align*}
&\sqrt{2\pi t^3}Q^{x}\bigg[\frac{1}{\sum_{u}Y_u(t)e^{\rho Y_u(t)}}\bigg]\nonumber\\
&\hspace{0.2in}\geq \sqrt{2\pi t^3}Q^{x}\bigg[\frac{1}{\sum_{u}Y_u(t)e^{\rho Y_u(t)}};\xi_t\leq \varepsilon^{-1/2}\bigg]\nonumber\\
&\hspace{0.2in}=\sqrt{2\pi t^3}\int_{0}^{\varepsilon^{-1/2}}Q^{x,t,z}\bigg[\frac{1}{\sum_{u}Y_u(t)e^{\rho Y_u(t)}}\bigg]\frac{1}{\sqrt{2\pi t}}\frac{z}{x}e^{-(x-z)^2/(2t)}(1-e^{-2xz/t})dz.
\end{align*}
For every $\varepsilon$ and $x$, there exists a $T(\varepsilon,x)$ such that for all $t\geq T(\varepsilon, x)$ and $z\in (0,\varepsilon^{-1/2}]$,
\begin{equation*}
e^{-(x-z)^2/(2t)} \geq\frac{1}{2}, \quad 1-e^{-2xz/t}\geq \frac{1}{2}\cdot\frac{2xz}{t}=\frac{xz}{t}.
\end{equation*}
Therefore,
\begin{equation}\label{lbendpoint}
\sqrt{2\pi t^3}Q^{x}\bigg[\frac{1}{\sum_{u}Y_u(t)e^{\rho Y_u(t)}}\bigg]\geq \frac{1}{2}\int_{0}^{\varepsilon^{-1/2}}Q^{x,t,z}\bigg[\frac{1}{\sum_{u}Y_u(t)e^{\rho Y_u(t)}}\bigg]z^2dz.
\end{equation}

Next, we restrict the integrand to the case where all particles branch off the spine after $t^*$. Letting
\[ 
V=\big\{\forall u \in \mathcal{N}_t: O_u> t^*\big\}=\bigcap_{i=1}^{4}V_i^c,
\]
we have
\begin{equation}\label{lbtruncation}
Q^{x,t,z}\bigg[\frac{1}{\sum_{u}Y_u(t)e^{\rho Y_u(t)}}\bigg]\geq Q^{x,t,z}\bigg[\frac{1_{V}}{\sum_{u}Y_u(t)e^{\rho Y_u(t)}}\bigg]=Q^{x,t,z}\bigg[\frac{1_{V}}{\sum_{u}Y_u(t)e^{\rho Y_u(t)}1_{\{O_u> t^*\}}}\bigg].
\end{equation}
Define $\mathcal{G}_{t}$ to be the $\sigma-$field generated by $V$ and the whole trajectory of the spine, $\{\xi_s\}_{0\leq s\leq t}$. In other words, $\mathcal{G}_t$ contains all the information regarding the movement of the spine and the event that all descendants alive at time $t$ branch off the spine after $t^*$. Conditioning on $\mathcal{G}_{t}$, Jensen's inequality for conditional expectation gives
\begin{align}\label{lbJensen}
Q^{x,t,z}\bigg[\frac{1_V}{\sum_{u}Y_u(t)e^{\rho Y_u(t)}1_{\{O_u> t^*\}}}\bigg]
&\geq Q^{x,t,z}\bigg[\frac{1_{V\cap \{M'\leq C\sqrt{\varepsilon}\}}}{\sum_{u}Y_u(t)e^{\rho Y_u(t)}1_{\{O_u> t^*\}}}\bigg]\nonumber\\
&= Q^{x,t,z}\bigg[1_{V\cap\{M'\leq C\sqrt{\varepsilon}\}}Q^{x,t,z}\bigg[\frac{1}{\sum_{u}Y_u(t)e^{\rho Y_u(t)}1_{\{O_u> t^*\}} } \bigg|\mathcal{G}_{t} \bigg]\bigg]\nonumber\\
&\geq Q^{x,t,z}\bigg[\frac{1_{V\cap\{M'\leq C\sqrt{\varepsilon}\}}}{Q^{x,t,z}\big[\sum_{u}Y_u(t)e^{\rho Y_u(t)}1_{\{O_u> t^*\}}\big| \mathcal{G}_{t}\big]}\bigg].
\end{align}
To deal with the denominator, we need to use the fact that for every $\varepsilon$, $\big\{\sum_u Y_u(t)e^{\rho Y_u(t)+\varepsilon t}\big\}_{t\geq 0}$ is a martingale for the original BBM with absorption. Under the measure $Q$, particles branch off the spine with rate $2$ and initiate independent copies of the original BBM with absorption. Note that $\xi_s=\zeta_{t-s}$ for $0\leq s\leq t$.
Then by the spinal decomposition and the formula for expectations of additive functionals of Poisson point processes, we have
\begin{align}\label{lbspinepoisson}
Q^{x,t,z}\bigg[\sum_{u}Y_u(t)e^{\rho Y_u(t)}1_{\{O_u> t^*\}}\Big|\mathcal{G}_{t}\bigg]
&=2\int_{t^*}^{t} \xi_{r}e^{\rho\xi_{r}-\varepsilon(t-r)}dr+ze^{\rho z}\nonumber\\
&=2\int_{0}^{(4/\varepsilon)^{2+\kappa}}\zeta_se^{\rho\zeta_s-\varepsilon s}ds+ze^{\rho z}.
\end{align}
Moreover, on the event $\{M'\leq C\sqrt{\varepsilon}\}$, if $\varepsilon$ is sufficiently small, for all $0\leq s\leq (4/\varepsilon)^{2+\kappa}$,
\[
\zeta_s\leq \frac{1}{\rho}\bigg(\frac{4}{\varepsilon}\bigg)^{2+\kappa}\varepsilon+\frac{C}{\sqrt{\varepsilon}}\leq 4^{2+\kappa}\varepsilon^{-1-\kappa},
\]
and
\[
\rho \zeta_s-\varepsilon s= \frac{\rho}{\varepsilon}(\varepsilon \zeta_s-\frac{1}{\rho}\varepsilon^2s)\leq \frac{\rho}{\varepsilon}C\sqrt{\varepsilon}=\frac{C\rho}{\sqrt{\varepsilon}}.
\]
Thus, when $M' \leq C\sqrt{\varepsilon}$, for all $z\in (0,\varepsilon^{-1/2}]$,
\begin{equation}\label{lbM}
2\int_{0}^{(4/\varepsilon)^{2+\kappa}}\zeta_se^{\rho\zeta_s-\varepsilon s}ds+ze^{\rho z} \leq 2\bigg(\frac{4}{\varepsilon}\bigg)^{2+\kappa}\cdot 4^{2+\kappa}\varepsilon^{-1-\kappa}\cdot e^{C\rho/ \sqrt{\varepsilon}}+\varepsilon^{-1/2}e^{\rho/\sqrt{\varepsilon}}\leq 2^{10+4\kappa}\varepsilon^{-3-2\kappa}e^{C\rho/\sqrt{\varepsilon}}.
\end{equation}
Combining (\ref{lbJensen}), (\ref{lbspinepoisson}) and (\ref{lbM}), we have
\begin{equation}\label{lbprobability}
Q^{x,t,z}\bigg[\frac{1_V}{\sum_{u}Y_u(t)e^{\rho Y_u(t)}1_{\{O_u> t^*\}}}\bigg] \geq 2^{-10-4\kappa}\varepsilon^{3+2\kappa}e^{-C\rho/\sqrt{\varepsilon}}Q^{x,t,z}\Big(V\cap \{M'\leq C\sqrt{\varepsilon}\}\Big).
\end{equation}

It remains to find a lower bound for the probability of the above event. Because $\{\xi_s\}_{0\leq s\leq t}$ is a Markov process under $Q^{x,t,z}$, we have $\{\xi_s\}_{0\leq s\leq t^*}$ is conditionally independent of $\{\xi_s\}_{t^*\leq s\leq t}$ given $\xi_{t^*}$. Furthermore, note that $V$ is the event that particles which branch off the spine before time $t^*$ all become extinct before time $t$ and once a particle branches off the spine, it initiates a BBM independent of the future trajectory of the spine. As a result, conditioned on $\xi_{t^*}$, the events $V$ and $\{M'\leq C\sqrt{\varepsilon}\}$ are independent. By Lemma \ref{M'}, we obtain
\begin{align}\label{lbprobabilityestimate}
Q^{x,t,z}\Big(V\cap \{M'\leq C\sqrt{\varepsilon}\}\Big)&=\int_{0}^{\infty}Q^{x,t,z}\Big(V\cap \{M'\leq C\sqrt{\varepsilon}\}\Big|\xi_{t^*}=y\Big)p_{t^*}^{x,t,z}(y) dy\nonumber\\
&=\int_{0}^{\infty} Q^{x,t,z}\big(V\big|\xi_{t^*}=y\big)Q^{x,t,z}\big(M'\leq C\sqrt{\varepsilon}\big|\xi_{t^*}=y\big)p_{t^*}^{x,t,z}(y) dy\nonumber\\
&\geq \int_{0}^{\varepsilon^{-1-\kappa}}Q^{x,t,z}\big(V\big|\xi_{t^*}=y\big)Q^{y,(4/\varepsilon)^{2+\kappa},z}\big( M'\leq C\sqrt{\varepsilon}\big)p_{t^*}^{x,t,z}(y) dy\nonumber\\
&\geq \varepsilon^{-3/4}e^{-C_{12}/\sqrt{\varepsilon}}Q^{x,t,z}\Big(V\cap \big\{\xi_{t^*}\leq \varepsilon^{-1-\kappa}\big\}\Big)\nonumber\\
&= \varepsilon^{-3/4}e^{-C_{12}/\sqrt{\varepsilon}}\Big[Q^{x,t,z}\Big(\xi_{t^*}\leq \varepsilon^{-1-\kappa}\Big)-Q^{x,t,z}\Big(V^c\cap \big\{\xi_{t^*}\leq \varepsilon^{-1-\kappa}\big\}\Big)\Big].
\end{align}
As for the first term, note that $\{\xi_{t^*}\leq \varepsilon^{-1-\kappa}\}=\{\zeta_{(4/\varepsilon)^{2+\kappa}}\leq \varepsilon^{-1-\kappa}\}$, where $\{\zeta_{s}\}_{0\leq s\leq t}$ is a Bessel bridge from $z$ to $x$ in time $t$ under $Q^{x,t,z}$. Define $\{R_r^{z}\}_{r\geq 0}$ to be a Bessel process starting from $z$. We apply Lemma \ref{weakconvergence} to obtain,
\[
\lim_{t\rightarrow \infty}Q^{x,t,z}\Big(\xi_{t^*}\leq \varepsilon^{-1-\kappa}\Big)=P \Big(R^{z}_{(4/\varepsilon)^{2+\kappa}}\leq \varepsilon^{-1-\kappa}\Big).
\]
According to the scaling property of the Bessel process, we have for $\varepsilon$ sufficiently small, for all $z\in (0,\varepsilon^{-1/2}]$,
\begin{align*}
P \Big(R^{z}_{(4/\varepsilon)^{2+\kappa}}\leq \varepsilon^{-1-\kappa}\Big)&
=P\bigg(\Big(\frac{\varepsilon}{4}\Big)^{1+\kappa/2}R^{z}_{(4/\varepsilon)^{2+\kappa}}\leq \Big(\frac{\varepsilon}{4}\Big)^{1+\kappa/2}\varepsilon^{-1-\kappa}\bigg)\\
&=P \bigg(R^{z(\varepsilon/4)^{1+\kappa/2}}_{1}\leq \frac{\varepsilon^{-\kappa/2}}{4^{1+\kappa/2}}\bigg)\\
&> \frac{1}{2}.
\end{align*}
Therefore, for $\varepsilon$ small enough,  for all $z\in (0,\varepsilon^{-1/2}]$, if $t$ is large enough, we have
\begin{equation}\label{lbprobabilityestimate1}
Q^{x,t,z}\Big(\xi_{t^*}\leq \varepsilon^{-1-\kappa}\Big)\geq \frac{1}{2}.
\end{equation}
As for the second term, according to Lemma \ref{cutoff}, for $\varepsilon$ sufficiently small, for all $z\in (0,\varepsilon^{-1/2}]$, if $t$ is large enough, then
\begin{equation}\label{lbprobabilityestimate2}
Q^{x,t,z}\Big(V^c\cap \big\{\xi_{t^*}\leq \varepsilon^{-1-\kappa}\big\}\Big)\leq Q^{x,t,z}\Big(V^c\Big)<\frac{1}{4}.
\end{equation}

In the end, by (\ref{lbendpoint}), (\ref{lbtruncation}), (\ref{lbprobability})--(\ref{lbprobabilityestimate2}) and Fatou's Lemma, we proved that for $\varepsilon$ small enough,
\begin{align*}
&\liminf_{t\rightarrow \infty}\sqrt{2\pi t^3}Q^{x}\bigg[\frac{1}{\sum_{u}Y_u(t)e^{\rho Y_u(t)}}\bigg] \\
&\hspace{0.2in}\geq \frac{1}{2}\int_{0}^{\varepsilon^{-1/2}}2^{-10-4\kappa}\varepsilon^{2+2\kappa}e^{-C\rho/\sqrt{\varepsilon}}\liminf_{t\rightarrow \infty}Q^{x,t,z}\Big(V\cap\{M'\leq C\sqrt{\varepsilon}\}\Big)z^2dz \\
&\hspace{0.2in}\geq 2^{-13-4\kappa}3^{-1}\varepsilon^{3/4+2\kappa}e^{-(C\rho+C_{12})/\sqrt{\varepsilon}}.
\end{align*}
Consequently, the lower bound in Theorem \ref{main} is proved as long as 
\[
C_2>2C+C_{12}\geq C\rho+C_{12}.
\]
\qedwhite

\subsection{Proof of Lemmas}
Before proving Lemma \ref{cutoff}, we need one more ingredient. Recall that $\{R_r^{z}\}_{z\geq 0}$ is a Bessel process starting from $z$.
\begin{Lemma} \label{appendix}
 For every fixed $\varepsilon$, we have
\[
\lim_{t\rightarrow\infty}Q^{x,t,z}\bigg(\exists r\geq  \Big(\frac{4}{\varepsilon}\Big)^{2/(1-2\delta_1)} : \zeta_r\geq r^{1/2+\delta_1}\bigg)=P\bigg(\exists r\geq  \Big(\frac{4}{\varepsilon}\Big)^{2/(1-2\delta_1)} : R_r^z\geq r^{1/2+\delta_1}\bigg).
\]
\end{Lemma}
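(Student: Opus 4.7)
My plan is to combine the weak convergence from Lemma \ref{weakconvergence} with a uniform-in-$t$ tail estimate on the Bessel bridge, truncating the supremum to a compact window. Define $T_0 := (4/\varepsilon)^{2/(1-2\delta_1)}$ and, for $n > T_0$, set
\[
f_n(\omega) := \sup_{r \in [T_0,\, n]}\bigl(\omega_r - r^{1/2+\delta_1}\bigr), \qquad f(\omega) := \sup_{r \geq T_0}\bigl(\omega_r - r^{1/2+\delta_1}\bigr),
\]
where on the bridge side $f$ is understood with the supremum restricted to $[T_0, t]$. The event of the lemma is $\{f \geq 0\}$. The plan has three ingredients: (i) for each fixed $n$, $\lim_{t\to\infty}Q^{x,t,z}(f_n(\zeta)\geq 0) = P(f_n(R^z)\geq 0)$; (ii) $P(f_n(R^z)\geq 0) \uparrow P(f(R^z) \geq 0)$ as $n \to \infty$; and (iii) the tail difference $Q^{x,t,z}(f(\zeta)\geq 0) - Q^{x,t,z}(f_n(\zeta)\geq 0)$ is $o_n(1)$ uniformly in $t$ large. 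A sandwich via $\liminf$ and $\limsup$ in $t$, then letting $n\to\infty$, yields the equality.

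For (i), $f_n$ is continuous at continuous paths with respect to uniform convergence on $[T_0, n]$ (equivalently, the Skorokhod topology when the limit has continuous paths), so Lemma \ref{weakconvergence} and the continuous mapping theorem give $f_n(\zeta) \Rightarrow f_n(R^z)$. The distribution of $f_n(R^z)$ is atomless at $0$ by standard continuity arguments for suprema of non-degenerate continuous diffusions (the Bessel-3 process has absolutely continuous transition densities and non-trivial fluctuations on every subinterval), which upgrades the weak convergence to convergence of $P(f_n \geq 0)$. Step (ii) is monotone convergence, since $\{f_n \geq 0\} \uparrow \{f \geq 0\}$.

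Step (iii) is the main obstacle. I would split the excess range $[n, t]$ into $[n, t/2]$ and $[t/2, t]$. On $[n, t/2]$, I use the Doob $h$-transform representation of the bridge: for $s < t$,
\[
\frac{dQ^{x,t,z}}{dP^z}\bigg|_{\mathcal{F}_s} = \frac{p_{t-s}(\zeta_s,\, x)}{p_t(z,\, x)}.
\]
Inserting the explicit Bessel-3 density from Section 2 and applying $1-e^{-u}\leq u$ in the numerator with $u = 2x\zeta_s/(t-s)$ together with $1-e^{-u}\geq u/2$ for $u\leq 1$ in the denominator with $u = 2xz/t$ (valid for $t$ large), one checks that this density is bounded by $2(t/(t-s))^{3/2} e^{(z-x)^2/(2t)} \leq C(x,z)$ uniformly in $\zeta_s$ for $s\leq t/2$. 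Hence
\[
Q^{x,t,z}\bigl(\exists\, r\in [n,t/2]:\zeta_r\geq r^{1/2+\delta_1}\bigr) \leq C(x,z)\,P\bigl(\exists\, r\geq n:R_r^z\geq r^{1/2+\delta_1}\bigr),
\]
and the right-hand side vanishes as $n\to\infty$ by the law of the iterated logarithm for the Bessel-3 process (or, explicitly, by a dyadic union bound over $r\in[2^k, 2^{k+1}]$ using scaling and Gaussian tail estimates, since $\delta_1>0$). On $[t/2, t]$, any violating $r$ forces $\sup_{r \leq t}\zeta_r \geq (t/2)^{1/2+\delta_1}$; since $\{\zeta_{tu}/\sqrt{t}\}_{0\leq u\leq 1}$ is a Bessel bridge from $z/\sqrt{t}$ to $x/\sqrt{t}$ in unit time, converging in law as $t\to\infty$ to a standard Brownian excursion whose supremum is a.s. finite, this probability is $o(1)$. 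Combining (i)--(iii) gives the desired equality.
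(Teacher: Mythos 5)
Your proposal is correct but takes a genuinely different route from the paper. The paper applies Lemma \ref{weakconvergence} once to the full unbounded-time event $A=\{\exists r\geq c: R_r^z\geq r^{1/2+\delta_1}\}$ via the Portmanteau theorem, reducing everything to showing $P(\partial A)=0$. That single step is where all the work lives: the paper first uses the law of the iterated logarithm at $\infty$ to argue that boundary paths must actually attain $\inf_r(r^{1/2+\delta_1}-R_r^z)=0$ at a finite hitting time $\sigma$, and then uses the strong Markov property, a Brownian coupling, and the LIL at $0$ to show that after touching the curve the process crosses a.s., so $P(\partial A)=0$. You instead truncate to compact windows $[T_0,n]$, apply weak convergence plus continuous mapping there, and control the excess interval $[n,t]$ by an explicit uniform-in-$t$ bound on the Radon--Nikodym density of the bridge with respect to the unconditioned Bessel process restricted to $\mathcal F_{t/2}$; your density bound $2(t/(t-s))^{3/2}e^{(z-x)^2/(2t)}$ is correct (it mirrors the computation in (\ref{density})) and turns the tail into a free-Bessel LIL estimate. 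This buys you the right to work only on compacta, at the cost of the extra $h$-transform estimate, which the paper does not need.

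The one place where you gloss over the real technical content is step (i): you assert the distribution of $f_n(R^z)=\sup_{r\in[T_0,n]}(R_r^z-r^{1/2+\delta_1})$ is atomless at $0$ ``by standard continuity arguments for suprema of non-degenerate continuous diffusions.'' For a moving boundary this is not an off-the-shelf fact; you must rule out the event that the path touches $r^{1/2+\delta_1}$ at some interior $r^*$ and never exceeds it, which is precisely the strong-Markov-plus-LIL-at-$0$ argument the paper carries out in the display culminating in (\ref{translation0prob}). It does follow (the $n<\infty$ truncation in fact makes it slightly easier, since the infimum is automatically attained by compactness and you can skip the paper's LIL-at-$\infty$ step there), but as written this is an unproved claim, and you cannot avoid reproducing the LIL-at-$0$ crossing argument one way or another. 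You still use LIL at $\infty$, but to control the free Bessel tail $P(\exists r\geq n: R_r^z\geq r^{1/2+\delta_1})\to 0$ rather than for attainment of the infimum, so the same two ingredients appear in both proofs, just allocated to different parts of the argument.
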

\begin{proof}
According to Lemma \ref{weakconvergence}, the Bessel bridge converges to the Bessel process in the Skorokhod topology. Recall that under $Q^{x,t,z}$, the process $\{\zeta_r\}_{0\leq r\leq t}$ is a Bessel bridge from $z$ to $x$ in time $t$. Since both Bessel bridges and the Bessel process are continuous, the Skorokhod topology in this case coincides with the uniform topology. Thus, it is sufficient to prove that for a Bessel process $\{R_r^{z}\}_{r\geq 0}$ starting from $z$ under $P$, for every constant $c\geq 1$, the event 
\[A:=\Big\{\exists r\geq c: R_r^{z}\geq r^{1/2+\delta_1}\Big\}\]
is a continuity set under the uniform topology. That is to say, letting $\partial A$ denote the boundary set of $A$ under the uniform topology, essentially, we want to prove that
\begin{equation}\label{simplifiedgoal}
P(\partial A)=P\big(\{\omega:\{R_r^{z}(\omega)\}_{r\geq 0}\in \partial A\}\big)=0.
\end{equation}

We first consider elements in $\partial A$, which can be approached both from $A$ and $A^c$ under the uniform topology. Note that $A^c=\{\forall r\geq c, R_r^{z}<r^{1/2+\delta_1}\}$.
For $\{R_r^{z}(\omega)\}_{r\geq 0} \in \partial A$, if there exists an $r\geq c$ such that $R_r^{z}(\omega)>r^{1/2+\delta_1}$, then $\{R_r^{z}(\omega)\}_{r\geq 0}$ cannot be approached from $A^c$. As a result, $R_r^{z}(\omega)\leq r^{1/2+\delta_1}$ for all $r\geq c$. Furthermore, if $\inf_{r\geq c}(r^{1/2+\delta_1}-R_r^{z}(\omega))>0$, then it cannot be approached from $A$. Thus, $\inf_{r\geq c}(r^{1/2+\delta_1}-R_r^{z}(\omega))=0$. Indeed, this infimum must be attained at some finite value of $r$ because of the law of iterated logarithm of the Bessel process at infinity (see, e.g., IV.40 of \cite{borodin2012handbook}). More precisely, letting $\sigma=\inf\{r\geq c: R_r^{z}=r^{1/2+\delta_1}\}$, we see that
\begin{align*}
P(\{\sigma=\infty\}\cap \partial A)&
\leq P\Big(\{\sigma=\infty\}\cap\big\{\inf_{r\geq c}(r^{1/2+\delta_1}-R_r^z)=0\big\}\Big)\\
&\leq P\Big(\lim_{r\rightarrow \infty}\big(r^{1/2+\delta_1}-R_r^z\big)=0\Big)\\
&=0.
\end{align*}
Note that $\sigma$ is a stopping time and let $\mathcal{F}_{\sigma}$ be the $\sigma-$ field generated by $\sigma$. By the strong Markov property of the Bessel process, we have
\begin{align}\label{boundary}
P(\partial A)&=P(\partial A\cap \{\sigma<\infty\})\nonumber\\
&\leq P\Big(\Big\{\forall r\geq c: R_r^{z}\leq r^{1/2+\delta_1}\Big\}\cap\{\sigma<\infty\}\Big)\nonumber\\
&=E\Big[1_{\{\sigma<\infty\}}P\Big(\forall r\geq 0, R_r^{\sigma^{1/2+\delta_1}}\leq (r+\sigma)^{1/2+\delta_1}\Big)\Big].
\end{align}
Using the same method as the proof of Lemma \ref{stochasticdominance}, it can be shown that the Bessel process $\{R_r^z\}_{r\geq 0}$ stochastically dominates Brownian motion $\{B_r^{z}\}_{r\geq 0}$. Thus, conditioned on $\mathcal{F}_\sigma$,
\begin{align}\label{translation}
P\Big(\forall r\geq 0, R^{\sigma^{1/2+\delta_1}}_r\leq (r+\sigma)^{1/2+\delta_1}\Big)
&\leq P\Big(\forall r\geq 0, B^{\sigma^{1/2+\delta_1}}_r\leq (r+\sigma)^{1/2+\delta_1}\Big)\nonumber\\
&=P\Big(\forall r\geq 0, B_r\leq (r+\sigma)^{1/2+\delta_1}-\sigma^{1/2+\delta_1}\Big).
\end{align}
By the law of the iterated logarithm at $0$ for Brownian motion (see, e.g., IV.5 of \cite{borodin2012handbook}), we have almost surely
\begin{equation}\label{LIL}
\limsup_{t\rightarrow 0}\frac{B_t}{\sqrt{2t\ln\ln(1/t)}}=1.
\end{equation}
Conditioned on $\mathcal{F}_{\sigma}$, since $0<\delta_1<1/4$ and $\sigma\geq c\geq 1$, a Taylor expansion gives  
\[
(r+\sigma)^{1/2+\delta_1}-\sigma^{1/2+\delta_1}=\bigg(\frac{1}{2}+\delta_1\bigg)\sigma^{-1/2+\delta_1}r+o(r)\leq \bigg(\frac{1}{2}+\delta_1\bigg)r+o(r).
\]
Since $0<\delta_1<1/4$, conditioned on $\mathcal{F}_{\sigma}$, there exists an $\alpha$ such that for all $r\leq \alpha$,
\begin{equation}\label{small}
(r+\sigma)^{1/2+\delta_1}-\sigma^{1/2+\delta_1}<\frac{3}{4}\sqrt{2r\ln\ln\bigg(\frac{1}{r}\bigg)}.
\end{equation}
From (\ref{LIL}) and (\ref{small}), conditioned on $\mathcal{F}_{\sigma}$, we have
\begin{equation}\label{translation0prob}
P(\forall r\geq 0, B_r\leq (r+\sigma)^{1/2+\delta_1}-\sigma^{1/2+\delta_1})=0.
\end{equation}
By (\ref{boundary}), (\ref{translation}) and (\ref{translation0prob}), equation (\ref{simplifiedgoal}) is proved and the lemma follows.
\end{proof}

\noindent\textit{Proof of Lemma \ref{cutoff}.}
Let $u(t,x)$ be the probability of survival at time $t$ for a branching Brownian motion starting from $x$ under $P^x_{-\rho}$. It is pointed out in equation (5) of \cite{harris2007survival} that
\begin{equation}\label{u}
u(t,x)\leq e^{\rho x-\varepsilon t}.
\end{equation}
Moreover, we write $0\leq \tau_1<\tau_2<...\leq t$ for the successive branching times along the spine. Note that under $Q^x$, particles branch off the spine at rate $2$. We define $\mathcal{N}^i_t$ to be the the set of surviving particles at time $t$ which have branched off the spine at time $\tau_i$.  Inheriting notations from Section 3, we denote by $p_s(x,y)$ the transition probability of a Bessel process and $p^{x,t,z}_s(y)$ the transition probability of a Bessel bridge from $x$ to $z$ within time $t$. 

Start with $V_1$ which is defined in (\ref{V1def}). Applying (\ref{u}), we have
\begin{align}\label{V_1}
Q^{x,t,z}(V_{1}) 
& \leq Q^{x,t,z}\Bigg[\sum_{i:\tau_i\leq t'}1_{\{\mathcal{N}_t^i \neq\emptyset\}}1_{\{\xi_s\leq (t')^{1/2+\delta_1},\; \forall s\leq t'\}}\Bigg]\nonumber\\
& \leq 2\int_{0}^{t'} \int_{0}^{(t')^{1/2 +\delta_1}} u(t-s,y) p_{s}^{x,t,z}(y) dy ds\nonumber\\
& \leq 2\int_{0}^{t'} \int_{0}^{(t')^{1/2 +\delta_1}} e^{\rho y -\varepsilon (t-s)} p_{s}^{x,t,z}(y)  dy ds\nonumber\\
& \leq 2e^{\rho (t')^{1/2+\delta_1}} \int_{0}^{t'}  e^{-\varepsilon (t-s)} ds\nonumber\\
& \leq \frac{2}{\varepsilon}e^{-\varepsilon(t-t')+\rho (t')^{1/2+\delta_1}}.
\end{align}
For every fixed $\varepsilon$, since $0<\delta_1<\delta_2<1/4$, we have
\[
\lim_{t\rightarrow \infty}\frac{\varepsilon(t-t')}{\rho (t')^{1/2 +\delta_1}}= \infty.
\] 
Therefore, for every fixed $\varepsilon$,
\begin{equation}\label{V1}
\lim_{t\rightarrow \infty}Q^{x,t,z}(V_1)=0.
\end{equation}

As for $V_2$ and $V_4$, which are defined in equations (\ref{V2def}) and (\ref{V4def}) respectively, notice that the process $\{\xi_{s}\}_{0\leq s\leq t}$ is a Bessel bridge from $x$ to $z$ in time $t$ under $Q^{x,t,z}$. According to the scaling property of the Bessel bridge, $\{\xi_{rt}/\sqrt{t}\}_{0\leq r\leq 1}$ is a Bessel bridge from $x/\sqrt{t}$ to $z/\sqrt{t}$ within time 1. Define $\{X_r^{x/\sqrt{t},1,z/\sqrt{t}}\}_{0\leq r\leq 1}$ to be a Bessel bridge from $x/\sqrt{t}$ to $z/\sqrt{t}$ within time 1. Then we have
\[
0\leq Q^{x,t,z}(V_2)\leq Q^{x,t,z}\Big(\exists s\leq t', \xi_s>(t')^{1/2+\delta_1}\Big)=P\bigg(\exists r\leq \frac{t'}{t}, X_r^{x/\sqrt{t},1,z/\sqrt{t}}>\frac{(t')^{1/2+\delta_1}}{t^{1/2}}\bigg).
\]
Note that 
\[
\frac{(t')^{1/2+\delta_1}}{t^{1/2}}=\frac{(t-t^{1/2+\delta_2})^{1/2+\delta_1}}{t^{1/2}}\rightarrow \infty, \quad \mbox{as}\; t\rightarrow \infty.
\]
Accordingly, for every fixed $\varepsilon$,
\begin{equation}\label{V2}
0\leq\lim_{t\rightarrow\infty}Q^{x,t,z}(V_2)\leq \lim_{t\rightarrow\infty}P\bigg(\exists r<1, X^{x/\sqrt{t},1,z/\sqrt{t}}_r>\frac{(t')^{1/2+\delta_1}}{t^{1/2}}\bigg)=0.
\end{equation}
Similarly for $V_4$,
\[
Q^{x,t,z}(V_4)\leq Q^{x,t,z}\Big(\exists s\in(t', t^*], \xi_s>s^{1/2+\delta_1}\Big)=P\bigg(\exists r\in\bigg(\frac{t'}{t},\frac{t^*}{t}\bigg], X^{x/\sqrt{t},1,z/\sqrt{t}}_r>\frac{(rt)^{1/2+\delta_1}}{t^{1/2}}\bigg).
\]
Because as $t\rightarrow \infty$, for all $r\in (t'/t,t^*/t]$, $(rt)^{1/2+\delta_1}/t^{1/2}\rightarrow \infty$, we have for fixed $x$ and $\varepsilon$, for all $z\in(0,\varepsilon^{-1/2}]$,
\begin{equation}\label{V4}
0\leq \lim_{t\rightarrow \infty}Q^{x,t,z}(V_4)\leq \lim_{t\rightarrow\infty}P\bigg(\exists r\in\bigg(\frac{t'}{t}, \frac{t^*}{t}\bigg], X^{x/\sqrt{t},1,z/\sqrt{t}}_r>\frac{(rt)^{1/2+\delta_1}}{t^{1/2}}\bigg)=0.
\end{equation}

It remains to work on $Q^{x,t,z}(V_3)$. Recall that $V_3$ is defined in (\ref{V3def}). We will separate $Q^{x,t,z}(V_3)$ into two parts and show both of them have small probability as $t\rightarrow \infty$. For $z\in (0,\varepsilon^{-1/2}]$,
\begin{align}\label{V3separate}
Q^{x,t,z}(V_3)&=Q^{x,t,z}\Big(\{\exists u\in \mathcal{N}_t:t' < O_u\leq t^*\}\cap\{\xi_s\leq s^{1/2+\delta_1}, \;\forall s\in (t', t^*] \}\Big)\nonumber\\
&\leq Q^{x,t,z}\Big(\{\exists u\in \mathcal{N}_t:t' < O_u\leq t^*\}\cap\{ \xi_s\leq (t-s)^{1/2+\delta_1},\;\forall s\in(t', t^*] \}\Big)\nonumber\\
&\quad +Q^{x,t,z}\Big(\exists s\in (t', t^*]: \xi_s\geq (t-s)^{1/2+\delta_1}\Big)\nonumber\\
&=:H_1+H_2.
\end{align}
For $H_1$, we have
\begin{align*}
H_1
& \leq Q^{x,t,z} \Bigg[\sum_{i:t'<\tau_t\leq t^*}1_{\{\mathcal{N}^i_t\neq\emptyset\}}1_{\{\xi_s\leq (t-s)^{1/2+\delta_1}, \;\forall s\in(t', t^*]\}}\Bigg]\\
& \leq 2\int_{t'}^{t^*} \int_{0}^{(t-s)^{1/2+\delta_1}} u(t-s,y) p^{x,t,z}_{s}(y)dy ds.
\end{align*}
Letting $r=t-s$ and noting that a time-reversed Bessel bridge is also a Bessel bridge, we get
\begin{align}\label{J1first}
H_1
&\leq 2\int_{t-t^*}^{t^{1/2+\delta_2}}\int_{0}^{r^{1/2+\delta_1}}u(r,y)p^{z,t,x}_{r}(y)dydr\nonumber\\
& \leq 2\int_{t-t^*}^{t^{1/2+\delta_2}}\int_{0}^{r^{1/2+\delta_1}}e^{\rho y-\varepsilon r}p_{r}(z,y)\frac{p_{t-r}(y,x)}{p_t(z,x)}dydr.
\end{align}
We further observe that for large time $t$, the difference between the probability density functions of the Bessel bridge and the Bessel process are negligible. Note that $1-e^{-x}\leq x$ for $x\geq 0$ and $1-e^{-x}\geq x/2$ for $0\leq x\leq 1$. Then for every fixed $\varepsilon$, when $t$ is large enough, we have for all $z\in (0,\varepsilon^{-1/2}]$, $y\in (0,r^{1/2+\delta_1}]$, and $r\in [t-t^*,t^{1/2+\delta_2}]$ uniformly, 
\begin{equation*}
\frac{p_{t-r}(y,x)}{p_t(z,x)} 
=\frac{\frac{1}{\sqrt{2\pi(t-r)}}\cdot \frac{x}{y}e^{-(y-x)^2/2(t-r)}(1-e^{-2yx/(t-r)})}{\frac{1}{\sqrt{2\pi t}}\cdot \frac{x}{z}e^{-(z-x)^2/2t}(1-e^{-2xz/t})} \leq \sqrt{\frac{t}{t-r}}\frac{\frac{x}{y}\cdot \frac{2yx}{t-r}}{\frac{x}{z}\cdot \frac{2}{3}\cdot \frac{xz}{t}}=3\bigg(\frac{t}{t-r}\bigg)^{3/2} \leq 4.
\end{equation*}
Also see that for $r\geq t-t^*$, $\rho r^{1/2+\delta_1}\leq \rho\varepsilon r/4$. 
Based on the above two observations, we have for sufficiently small $\varepsilon$, if $t$ is large enough, then
\begin{align}\label{J1second}
2\int_{t-t^*}^{t^{1/2+\delta_2}}\int_{0}^{r^{1/2+\delta_1}}e^{\rho y-\varepsilon r}p_{r}(z,y)\frac{p_{t-r}(y,x)}{p_t(z,x)}dydr\nonumber
&\leq 8\int_{t-t^*}^{t^{1/2+\delta_2}}\int_{0}^{r^{1/2+\delta_1}}e^{\rho y-\varepsilon r}p_{r}(z,y)dydr\nonumber\\
&\leq 8\int_{t-t^*}^{t^{1/2+\delta_2}}e^{\rho r^{1/2+\delta_1}-\varepsilon r}dr\nonumber\\
&\leq 8\int_{t-t^*}^{t^{1/2+\delta_2}}e^{(\rho\varepsilon r/4)-\varepsilon r}dr\nonumber\\
&\leq \frac{8}{\varepsilon(1-\rho/4)}e^{-\varepsilon(1-\rho/4)(4/\varepsilon)^{2/(1-2\delta_1)}}.
\end{align}
Since $0<\delta_1<1/4$, together with (\ref{J1first}) and (\ref{J1second}), we have for any $0<\delta<1$, if $\varepsilon$ is sufficiently small, 
\begin{equation}\label{J1}
\limsup_{t\rightarrow \infty}H_1<\frac{16}{\varepsilon}e^{-1/\varepsilon}<\frac{\delta}{2}.
\end{equation}
As for $H_2$, we will apply the law of the iterated logarithm for the Bessel process (see, e.g., IV.40 of \cite{borodin2012handbook}) for all $z\in (0,\varepsilon^{-1/2}]$,
\[
P\bigg(\limsup_{t\rightarrow \infty} \frac{R_t^{z}}{\sqrt{2t\ln\ln t}}=1\bigg)=1.
\] 
Then it follows that for all $z\in (0,\varepsilon^{-1/2}]$,
\begin{equation}\label{QBessel}
\lim_{t\rightarrow \infty}P\Big( R_s^{z}< s^{1/2+\delta_1}, \; \forall s\geq t\Big)=1.
\end{equation}
Recall that $\{\zeta_s\}_{0\leq s\leq t}$ denotes the time-reversed Bessel bridge, which is a Bessel bridge from $z$ to $x$ in time $t$ under $Q^{x,t,z}$. From Lemma \ref{appendix} and (\ref{QBessel}), if $\varepsilon$ is sufficiently small, we have
\begin{align}\label{J2}
\limsup_{t\rightarrow\infty}H_2&=\limsup_{t\rightarrow \infty}Q^{x,t,z}\Big(\exists\; t\in [t',t^*]: \xi_s\geq (t-s)^{1/2+\delta_1}\Big)\nonumber\\
&\leq\limsup_{t\rightarrow \infty}Q^{x,t,z}\bigg(\exists\;r>\Big(\frac{4}{\varepsilon}\Big)^{2/(1-2\delta_1)}:\zeta_r\geq r^{1/2+\delta_1}\bigg)\nonumber\\
&=P\bigg(\exists\; r>\Big(\frac{4}{\varepsilon}\Big)^{2/(1-2\delta_1)}: R_r^z\geq r^{1/2+\delta_1}\bigg)\nonumber\\
&<\frac{\delta}{2}.
\end{align}
Consequently, by (\ref{V3separate}), (\ref{J1}) and (\ref{J2}), for sufficiently small $\varepsilon$,
\begin{equation}\label{V3}
\limsup_{t\rightarrow \infty} Q^{x,t,z}(V_3)<\delta.
\end{equation}
Together with (\ref{V1}), (\ref{V2}) and (\ref{V4}), the lemma follows.
\qedwhite
\\

\noindent\textit{Proof of Lemma \ref{M'}.}
Under $Q^{y,(4/\varepsilon)^{2+\kappa},z}$, the reversed trajectory of the spine $\{\zeta_{s}\}_{0\leq s \leq (4/\varepsilon)^{2+\kappa}}$ is a Bessel bridge from $z$ to $y$ within time $(4/\varepsilon)^{2+\kappa}$. After scaling, $\big\{(\varepsilon/4)^{1+\kappa/2}\zeta_{(4/\varepsilon)^{2+\kappa}r}\big\}_{0\leq r\leq 1}$ is a Bessel bridge from $(\varepsilon/4)^{1+\kappa/2} z$ to $(\varepsilon/4)^{1+\kappa/2} y$ within time $1$. Recall that the process $\{X_r^{(\varepsilon/4)^{1+\kappa/2}z,1,(\varepsilon/4)^{1+\kappa/2}y}\}_{0\leq r\leq 1}$ is a Bessel bridge from $(\varepsilon/4)^{1+\kappa/2}z$ to $(\varepsilon/4)^{1+\kappa/2}y$ in time $1$. For simplicity, we will write $\{X_r\}_{0\leq r\leq 1}$ in place of $\{X_r^{(\varepsilon/4)^{1+\kappa/2}z,1,(\varepsilon/4)^{1+\kappa/2}y}\}_{0\leq r\leq 1}$. Accordingly, we have
\begin{align*}
&Q^{y,(4/\varepsilon)^{2+\kappa},z}\Big(M'\leq C\sqrt{\varepsilon}\Big)\\
&\hspace{0.2in}=Q^{y,(4/\varepsilon)^{2+\kappa},z}\bigg(\sup_{0\leq s\leq (4/\varepsilon)^{2+\kappa}} \bigg(\varepsilon \zeta_s-\frac{1}{\rho}\varepsilon^2s\bigg)\leq C\sqrt{\varepsilon}\bigg)\\
&\hspace{0.2in}=Q^{y,(4/\varepsilon)^{2+\kappa},z}\bigg(\sup_{0\leq r\leq 1}\bigg(\Big(\frac{\varepsilon}{4}\Big)^{1+\kappa/2}\zeta_{(4/\varepsilon)^{2+\kappa}r}-\frac{4^{1+\kappa/2}}{\rho}\varepsilon^{-\kappa/2}r\bigg)\leq \frac{C}{4^{1+\kappa/2}}\varepsilon^{(\kappa+1)/2}\bigg)\\
&\hspace{0.2in}=P\bigg(\sup_{0\leq r\leq 1}\bigg(X_r-\frac{4^{1+\kappa/2}}{\rho}\varepsilon^{-\kappa/2}r\bigg)\leq \frac{C}{4^{1+\kappa/2}}\varepsilon^{(\kappa+1)/2}\bigg).
\end{align*}
By (0.22) of  \cite{pitman2006combinatorial}, we can represent $\{X_r\}_{0\leq r\leq 1}$ in terms of three independent standard Brownian bridges, $B_{(1)}^{0,1,0}, B_{(2)}^{0,1,0}, B_{(3)}^{0,1,0}$,
\[
X_r\xlongequal{d}\sqrt{\bigg(\Big(\frac{\varepsilon}{4}\Big)^{1+\kappa/2}z(1-r)+\Big(\frac{\varepsilon}{4}\Big)^{1+\kappa/2}yr+B_{(1)}^{0,1,0}(r)\bigg)^2+\Big(B_{(2)}^{0,1,0}(r)\Big)^2+\Big(B_{(3)}^{0,1,0}(r)\Big)^2}.
\]
According to the two formulas above, because $C>2\sqrt{3}$, we have for all $z\in (0,\varepsilon^{-1/2}]$ and $y\in(0,\varepsilon^{-1-\kappa}]$,
\begin{align}\label{M'equation}
&Q^{y,(4/\varepsilon)^{2+\kappa},z}\Big(M'\leq C\sqrt{\varepsilon}\Big)\nonumber\\
&\hspace{0.2in}\geq P\bigg(\sup_{0\leq r\leq 1}\bigg(\Big(\frac{\varepsilon}{4}\Big)^{1+\kappa/2}z+\Big(\frac{\varepsilon}{4}\Big)^{1+\kappa/2}yr+\Big|B_{(1)}^{0,1,0}(r)\Big|- \frac{4^{1+\kappa/2}}{\sqrt{3}\rho}\varepsilon^{-\kappa/2}r\bigg)\leq \frac{C}{\sqrt{3}\cdot4^{1+\kappa/2}}\varepsilon^{(\kappa+1)/2} \bigg)\nonumber\\
&\hspace{0.4in} \times \bigg[P\bigg(\sup_{0\leq r\leq 1}\bigg(\Big|B_{(2)}^{0,1,0}(r)\Big|- \frac{4^{1+\kappa/2}}{\sqrt{3}\rho}\varepsilon^{-\kappa/2}r\bigg)\leq \frac{C}{\sqrt{3}\cdot4^{1+\kappa/2}}\varepsilon^{(\kappa+1)/2} \bigg)\bigg]^2\nonumber\\
&\hspace{0.2in}\geq P\bigg(\sup_{0\leq r\leq 1}\bigg(\Big|B_{(1)}^{0,1,0}(r)\Big|- \frac{1}{2} \varepsilon^{-\kappa/2}r\bigg)\leq \frac{1}{4^{1+\kappa/2}}\varepsilon^{(\kappa+1)/2}\bigg)\nonumber\\
&\hspace{0.4in}\times \bigg[P\bigg(\sup_{0\leq r\leq 1}\bigg(\Big|B_{(2)}^{0,1,0}(r)\Big|- \frac{4^{1+\kappa/2}}{\sqrt{3}\rho}\varepsilon^{-\kappa/2}r\bigg) \leq \frac{C}{\sqrt{3}\cdot4^{1+\kappa/2}}\varepsilon^{(\kappa+1)/2}\bigg)\bigg]^2.
\end{align}
According to Lemma \ref{RBMa=0}, for $\varepsilon$ sufficiently small, we have
\begin{equation}\label{M'estimate1}
P\bigg(\sup_{0\leq r\leq 1}\bigg(\Big|B_{(1)}^{0,1,0}(r)\Big|- \frac{1}{2} \varepsilon^{-\kappa/2}r\bigg)\leq \frac{1}{4^{1+\kappa/2}}\varepsilon^{(\kappa+1)/2}\bigg)\geq \varepsilon^{-1/4}\exp\bigg\{-\frac{4\pi^2}{\sqrt{\varepsilon}}\bigg\},
\end{equation} 
\begin{equation}\label{M'estimate2}
P\bigg(\sup_{0\leq r\leq 1}\bigg(\Big|B_{(2)}^{0,1,0}(r)\Big|- \frac{4^{1+\kappa/2}}{\sqrt{3}\rho}\varepsilon^{-\kappa/2}r\bigg) \leq \frac{C}{\sqrt{3}\cdot4^{1+\kappa/2}}\varepsilon^{(\kappa+1)/2}\bigg)\geq \sqrt{\frac{3\rho\pi}{C}}\varepsilon^{-1/4}\exp\bigg\{-\frac{3\rho\pi^2}{8C\sqrt{\varepsilon}}\bigg\}.
\end{equation}
Letting $C_{12}>4\pi^2+12\pi^2/(8C)\geq4\pi^2+6\rho\pi^2/(8C)$, the Lemma follows from equations (\ref{M'equation}), (\ref{M'estimate1}) and (\ref{M'estimate2}).
\qedwhite

\bigskip
\noindent {\bf {\Large Acknowledgment}}
  The author would like to express deep gratitude to Professor Jason Schweinsberg for his patient guidance, constructive suggestions and useful critiques during the planning and development of this paper. The author would also like to thank two referees for carefully reading the paper and providing helpful comments. The author's research was supported in part by NSF Grant DMS-1707953.
\bigskip
\noindent


\begin{thebibliography}{99}
\bibitem{abramowitz1948handbook}
M.~Abramowitz and I.~A. Stegun.
\newblock {\em Handbook of Mathematical Functions with Formulas, Graphs, and
  Mathematical Tables}, volume~55.
\newblock US Government printing office, 1948.

\bibitem{beghin1999maximum}
L.~Beghin and E.~Orsingher.
\newblock On the maximum of the generalized {B}rownian bridge.
\newblock {\em Lithuanian Mathematical Journal}, 39(2):157--167, 1999.

\bibitem{berestycki2011survival}
J.~Berestycki, N.~Berestycki, and J.~Schweinsberg.
\newblock Survival of near-critical branching {B}rownian motion.
\newblock {\em Journal of Statistical Physics}, 143(5):833--854, 2011.

\bibitem{berestycki2014critical}
J.~Berestycki, N.~Berestycki, and J.~Schweinsberg.
\newblock Critical branching {B}rownian motion with absorption: survival
  probability.
\newblock {\em Probability Theory and Related Fields}, 160(3-4):489--520, 2014.

\bibitem{borodin2012handbook}
A.~N. Borodin and P.~Salminen.
\newblock {\em Handbook of Brownian Motion-Facts and Formulae}.
\newblock Birkh{\"a}user, 2012.

\bibitem{chernoffcs}
C.~Canonne.
\newblock A short note on poisson tail bounds.
\newblock
  \url{http://www.cs.columbia.edu/~ccanonne/files/misc/2017-poissonconcentration.pdf}.
\newblock Unpublished manuscript.

\bibitem{chauvin1988kpp}
B.~Chauvin and A.~Rouault.
\newblock {KPP} equation and supercritical branching {B}rownian motion in the
  subcritical speed area. {A}pplication to spatial trees.
\newblock {\em Probability Theory and Related Fields}, 80(2):299--314, 1988.

\bibitem{derrida2007survival}
B.~Derrida and D.~Simon.
\newblock The survival probability of a branching random walk in presence of an
  absorbing wall.
\newblock {\em EPL (Europhysics Letters)}, 78(6):60006, 2007.

\bibitem{fitzsimmons1993markovian}
P.~Fitzsimmons, J.~Pitman, and M.~Yor.
\newblock Markovian bridges: construction, palm interpretation, and splicing.
\newblock In {\em Seminar on Stochastic Processes, 1992}, pages 101--134.
  Springer, 1993.
  
\bibitem{gantert2011asymptotics}
N.~Gantert, Y.~Hu and Z.~Shi.
\newblock Asymptotics for the survival probability in a killed branching random walk.
\newblock In {\em Annales de l'Institut Henri Poincar{\'e}, Probabilit{\'e}s et Statistiques}, 47(1): 111--129. Institut Henri Poincar{\'e}, 2011.

\bibitem{harris2006further}
J.~W. Harris, S.~C Harris, and A.~E. Kyprianou.
\newblock Further probabilistic analysis of the
  {F}isher--{K}olmogorov--{P}etrovskii--{P}iscounov equation: one sided
  travelling-waves.
\newblock In {\em Annales de l'Institut Henri Poincar{\'e} (B) Probability and
  Statistics}, volume~42, pages 125--145. Elsevier, 2006.
  

\bibitem{harris2007survival}
J.W. Harris and S.C. Harris.
\newblock Survival probabilities for branching {B}rownian motion with
  absorption.
\newblock {\em Electronic Communications in Probability}, 12:81--92, 2007.

\bibitem{ikeda1977comparison}
N.~Ikeda and S.~Watanabe.
\newblock A comparison theorem for solutions of stochastic differential
  equations and its applications.
\newblock {\em Osaka Journal of Mathematics}, 14(3):619--633, 1977.

\bibitem{kesten1978branching}
H.~Kesten.
\newblock Branching {B}rownian motion with absorption.
\newblock {\em Stochastic Processes and their Applications}, 7(1):9--47, 1978.

\bibitem{berestycki2020yaglom}
P.~Maillard, and J.~Schweinsberg.
\newblock Yaglom-type limit theorems for branching {B}rownian motion with
  absorption.
\newblock {\it Pre-print.} {\tt arXiv:2010.16133}, 2020.

\bibitem{pain2018near}
M.~Pain.
\newblock The near-critical Gibbs measure of the branching random walk.
\newblock Annales de l'Institut Henri Poincar{\'e}, Probabilit{\'e}s et Statistiques, 54(3):1622--1666. Institut Henri Poincar{\'e}, 2018.

\bibitem{pitman2006combinatorial}
J.~Pitman.
\newblock {\em Combinatorial Stochastic Processes: Ecole d'Et{\'e} de
  Probabilit{\'e}s de Saint-Flour XXXII-2002}.
\newblock Springer, 2006.

\bibitem{roberts2015fine}
M.I. Roberts.
\newblock Fine asymptotics for the consistent maximal displacement of branching
  {B}rownian motion.
\newblock {\em Electronic Journal of Probability}, 20:1--26, 2015.

\bibitem{salminen2011hitting}
P.~Salminen and M.~Yor.
\newblock On hitting times of affine boundaries by reflecting {B}rownian motion
  and {B}essel processes.
\newblock {\em Periodica Mathematica Hungarica}, 62(1):75--101, 2011.

\bibitem{yaglom1947certain}
A.~M. Yaglom.
\newblock Certain limit theorems of the theory of branching random processes.
\newblock In {\em Doklady Akad. Nauk SSSR (NS)}, volume~56, pages 795--798,
  1947.
\end{thebibliography}
\end{document}